\documentclass[11pt]{amsart}
\usepackage{epsfig, graphics, psfrag, color}
\usepackage{amscd} 

\graphicspath{{figures/}}

\textwidth 6.07in 
\textheight 8.63in 
\oddsidemargin 0.18in
\evensidemargin 0.18in

\setlength{\marginparwidth}{0.8in}
\let\oldmarginpar\marginpar
\renewcommand\marginpar[1]{\oldmarginpar[\raggedleft\footnotesize #1]%
{\raggedright\footnotesize #1}}


\newcommand{\nin}{{\notin}}
\newcommand{\D}{{\partial}} 
\newcommand{\bdy}{{\partial}} 

\renewcommand{\phi}{{\varphi}}

\newcommand{\HH}{{\mathbb{H}}}
\newcommand{\RR}{{\mathbb{R}}}
\newcommand{\ZZ}{{\mathbb{Z}}}

\newcommand{\CC}{{\mathbb{C}}}
\newcommand{\PSL}{{\mathrm{PSL}}}

\def\co{\colon\thinspace}

\newcommand{\F}{{\mathcal{F}}}

\theoremstyle{plain}
\newtheorem{theorem}{Theorem}[section]

\newtheorem{cor}[theorem]{Corollary}
\newtheorem{lemma}[theorem]{Lemma}
\newtheorem{prop}[theorem]{Proposition}

\newtheorem{conjecture}[theorem]{Conjecture}
\newtheorem{question}[theorem]{Question}

\newtheorem*{namedtheorem}{\theoremname}
\newcommand{\theoremname}{testing}

\theoremstyle{definition}
\newtheorem{define}[theorem]{Definition}
\newtheorem{algorithm}[theorem]{Algorithm}
\newtheorem{procedure}[theorem]{Procedure}
\newtheorem{example}[theorem]{Example}
\newtheorem{remark}[theorem]{Remark}

\begin{document}


\title{Geodesics and compression bodies}

\author{Marc Lackenby}

\address{Mathematical Institute, University of Oxford, Oxford, UK}

\author{Jessica S. Purcell}

\address{Department of Mathematics, Brigham Young University, Provo,
  UT 84602, USA}


\begin{abstract}
We consider hyperbolic structures on the compression body $C$ with
genus 2 positive boundary and genus 1 negative boundary.  Note that
$C$ deformation retracts to the union of the torus boundary and a
single arc with its endpoints on the torus.  We call this arc the core
tunnel of $C$.  We conjecture that, in any geometrically finite
structure on $C$, the core tunnel is isotopic to a geodesic.  By
considering Ford domains, we show this conjecture holds for many
geometrically finite structures.  Additionally, we give an algorithm
to compute the Ford domain of such a manifold, and a procedure
which has been implemented to visualize many of these Ford domains.
Our computer implementation gives further evidence for the conjecture.
\end{abstract}

\maketitle

\newcommand{\mat}[2][cccc]{\left(\begin{array}{#1} #2\\
	\end{array}\right)}

\section{Introduction}\label{sec:intro}
For a hyperbolic manifold $M$ with torus boundary component $\partial
M_0$, every homotopically nontrivial arc in $M$ with endpoints on
$\partial M_0$ is homotopic to a geodesic.  However, it seems to be a
difficult problem to identify arcs in $M$ which are isotopic to a
geodesic, given only a topological description of $M$.

One place this problem arises is in the study of unknotting tunnels.
An \emph{unknotting tunnel} for a 3--manifold $M$ with torus boundary
components is defined to be an arc $\tau$ from $\partial M$ to
$\partial M$ such that $M\setminus N(\tau)$ is a handlebody.
Manifolds (other than a solid torus) that admit unknotting tunnels are
\emph{tunnel number one} manifolds.  Adams asked whether the
unknotting tunnel of a hyperbolic tunnel number one manifold is always
isotopic to a geodesic \cite{adams:tunnels}.  This has been shown to
be the case for many classes of hyperbolic tunnel number one manifolds
(\cite{adams-reid}, \cite{sakuma-weeks}).  Recently, Cooper, Futer,
and Purcell showed that the conjecture is true for a generic manifold,
in an appropriate sense of generic \cite{cfp:tunnels}.  The original
question still remains open, however.

The purpose of this paper is to present and motivate a related
question.  Any tunnel number one manifold is built by attaching a
compression body $C$ to a handlebody, and the unknotting tunnel
corresponds to an arc $\tau$ in the compression body.  We call $\tau$
the \emph{core tunnel} of $C$.  Given Adams' question on whether an
unknotting tunnel is isotopic to a geodesic, it seems natural to ask
whether the arc $\tau$ is isotopic to a geodesic under a complete
hyperbolic structure on $C$.

The compression body $C$ admits many complete hyperbolic structures.
Here, we examine those that are geometrically finite, and show that
for many such structures, the core tunnel is isotopic to a geodesic.
In order to investigate such structures, we develop algorithms to find
the Ford domains for geometrically finite structures on $C$.  We
present one algorithm that is guaranteed to find the Ford domain in
finite time and terminate, but which is impractical in practice, and a
procedure which has been implemented for the computer, which will find
the Ford domain and terminate for large families of geometrically
finite structures, and which we conjecture will always find the Ford
domain.

Computer investigation and the theorems proven for families of
geometrically finite hyperbolic structures lead us to the following
conjecture.

\begin{conjecture}
Let $C$ be a compression body with $\partial_{-}C$ a torus, and
$\partial_{+}C$ a genus two surface.  Suppose $C$ is given a
geometrically finite hyperbolic structure.  Then the core tunnel of
$C$ is isotopic to a geodesic.
\label{conj:core-tunnel}
\end{conjecture}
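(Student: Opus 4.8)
The plan is to study the conjecture through Ford domains, which is the method behind our theorems for restricted families of structures.

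\emph{Normalisation.} Since $\partial_{-}C$ is a torus, a geometrically finite structure on $C$ has a rank-two cusp; conjugate the holonomy $\rho$ so that this cusp sits at $\infty$ in the upper half space model of $\HH^3$. Then $\Gamma = \rho(\pi_1 C) = \langle P, \gamma\rangle$, where $P\cong\ZZ^2$ is the group of Euclidean translations fixing $\infty$, with translation lattice $\Lambda$, and $\gamma = \left(\begin{smallmatrix} a & b\\ c & d\end{smallmatrix}\right)$, with $c\ne 0$, is the holonomy of a loop carrying the free generator of $\pi_1 C = \pi_1(\partial_{-}C)\ast\ZZ$. Lifting the endpoint of $\tau$ on $\partial_{-}C$ to the horosphere at $\infty$, the arc $\tau$ lifts to a path from that horosphere to the horosphere centred at $\gamma(\infty) = a/c$ (after possibly replacing $\gamma$ by $\gamma^{-1}$). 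The geodesic sharing these two ideal endpoints is the vertical line $L$ over $a/c$. It therefore suffices to show that the image $\bar L$ of $L$ in $C$ is an embedded arc isotopic to $\tau$.

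\emph{The Ford domain.} Let $F\subset\HH^3$ be the Ford domain of $\Gamma$ based at $\infty$: the intersection of the half-spaces lying above all isometric spheres $I(g)$ for $g\in\Gamma\setminus P$, cut down by a fundamental domain for $\Lambda$ bounded by vertical planes. The crucial point — the part I expect to be hard — is the following: \emph{for the geometrically finite structures in question, the only faces of $F$ besides its vertical walls are the isometric spheres $I(\gamma)$ and $I(\gamma^{-1})$, and $\gamma$ carries the first to the second.} Granting this, $C$ is reconstructed from $F$ by gluing $I(\gamma)$ to $I(\gamma^{-1})$ via $\gamma$ and performing the side pairings on the vertical walls, and the combinatorics of the gluing are completely explicit.

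\emph{Isotopy and embeddedness.} In that model, $L$ meets $F$ in the single vertical segment from the cusp at $\infty$ down to the top of $I(\gamma^{-1})$ (recall $a/c$ is the centre of $I(\gamma^{-1})$), while the line $L' = \gamma^{-1}(L)$, a vertical line over $\gamma^{-1}(\infty) = -d/c$, meets $F$ in the segment from $\infty$ down to the top of $I(\gamma)$; since the face pairing $\gamma$ identifies the top of $I(\gamma)$ with the top of $I(\gamma^{-1})$, the common image $\bar L$ of $L$ and $L'$ is a single arc running from the cusp, down through $F$, across the glued floor face, and back up to the cusp. One checks $\bar L$ meets the maximal horospherical cusp cross-section only in its two endpoints, so it is embedded. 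Finally, $F$ deformation retracts — pushing the two floor faces up toward $\infty$ — onto the union of the cusp cross-section at $\infty$ with the segments $(L\cup L')\cap F$, which exhibits the cell structure of $C$ on which $\bar L$ is visibly the cocore of the one-handle; hence $\bar L$ is isotopic to $\tau$. (Equivalently: $\bar L$ and $\tau$ are homotopic rel $\partial_{-}C$, and in the irreducible manifold $C$ homotopic embedded arcs with endpoints on $\partial_{-}C$ are isotopic.)

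\emph{The obstacle.} Everything hinges on the face count in the second step, and that is where the difficulty lies. The Ford domain of a general $\ZZ^2\ast\ZZ$ Kleinian group need not be this simple: isometric spheres $I(g)$ for long words $g$ may rise above $I(\gamma^{\pm1})$ and their $\Lambda$-translates, producing extra faces and possibly forcing $L$ out of $F$ through a face whose side pairing destroys the transparent picture above. Controlling this amounts to showing that the centre and radius of every $I(g)$ are dominated by those of the two basic spheres and their $\Lambda$-translates — an estimate one can push through by induction on the word length of $g$ only under explicit inequalities relating $\gamma$ to the cusp shape $\Lambda$ (morally, that $\gamma$ is geometrically large). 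Removing those hypotheses — equivalently, obtaining a uniform combinatorial description of these Ford domains over the whole deformation space, or else proving $\bar L$ is always a geodesic directly when $F$ has more faces — is exactly what is missing, and is why the statement is a conjecture rather than a theorem.
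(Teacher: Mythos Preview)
The statement is a conjecture, and the paper does not prove it; it only establishes it for restricted families and offers computational evidence. Your proposal correctly recognises this and does not claim a full proof, so in that sense there is no ``gap'' to point to: you explicitly flag the step that fails in general.

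Where your write-up and the paper part ways is in how far the partial argument can be pushed. You make the entire argument rest on the Ford domain having \emph{only} the two faces $I(\gamma)$ and $I(\gamma^{-1})$, and then say that controlling the appearance of any further faces is the obstacle. The paper does prove exactly your special case (this is Proposition~\ref{prop:simple-ford}), but it then observes that the conclusion survives under a much weaker hypothesis: one does not need the Ford domain to be simple, only that $I(\gamma)$ remain \emph{visible} among possibly many other faces. The geodesic dual to that face is then still embedded and, via a continuity argument along a real analytic path of uniformizations starting from a simple structure (Lemma~\ref{lemma:tunnel-dual-gamma}), is still isotopic to the core tunnel. Theorem~\ref{thm:disk-disjoint-ford} exploits this to cover large families of structures whose Ford domains are far from simple (compare Figures~\ref{fig:compression-disk} and~\ref{fig:no-core}).

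So your diagnosis of the obstacle is too coarse. The genuine open question, as the paper frames it in Conjecture~\ref{conj:dual-ford}, is not whether extra isometric spheres appear --- they certainly do, already in Examples~\ref{ex:bumping} and~\ref{ex:sliding} --- but whether $I(\gamma)$ can ever be completely covered by them. Your inductive estimate on centres and radii would, if it worked, prove much more than is needed; the paper's path-of-structures method is both more flexible and closer to isolating the actual difficulty.
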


In fact, we conjecture something stronger.  We conjecture that the
core tunnel is not only isotopic to a geodesic, but always dual to a
face of the Ford domain.  This is Conjecture \ref{conj:dual-ford},
explained in Section \ref{sec:geodesics}.

The techniques of this paper can be seen as an extension of work of
J{\o}rgensen \cite{jorgensen}, who found Ford domains of geometrically
finite structures on $S\times \RR$, where $S$ is a once--punctured
torus.  J{\o}rgensen's work was extended and expanded by others,
including Akiyoshi, Sakuma, Wada, and Yamashita \cite{aswy,
  aswy:book}.  Wada implemented an algorithm to determine Ford domains
of these manifolds \cite{wada:opti}.

A complete understanding of the geometry of compression bodies, for
example through a study of Ford domains, could lead to many
interesting applications, since compression bodies are building blocks
of more complicated manifolds via Heegaard splitting techniques.
With Cooper, we have already applied some of the ideas in this paper
to build tunnel number one manifolds with arbitrarily long
unknotting tunnels \cite{clp:length}.

\subsection{Acknowledgements}
Both authors were supported by the Leverhulme trust.  Lackenby was
supported by an EPSRC Advanced Research Fellowship.  Purcell was
supported by NSF grants and the Alfred P.~Sloan foundation.

\section{Background and preliminary material}\label{sec:prelim}
In this section we review terminology and results used throughout the
paper.  Our intent is to make this paper as self--contained as
possible, and also to emphasize relations between the geometry and
topology of compression bodies.  

First, we review definitions and results on compression bodies, which
are the manifolds we study.  Next, we review what it means for these
manifolds to admit a geometrically finite hyperbolic structure.  We
then recall the definition of a Ford domain, since we will be using
Ford domains to examine geometrically finite hyperbolic structures on
compression bodies.  We also give a few definitions relevant to Ford
domains, such as visible isometric spheres, Ford spines, and complexes
dual to Ford spines.  Ford domains of geometrically finite manifolds
are finite sided polyhedra; thus we can often identify a Ford domain
using the Poincar{\'e} polyhedron theorem.  Finally, we review this
theorem and some of its relevant consequences.

\subsection{Compression bodies}

The manifolds we study in this paper are compression bodies with
negative boundary a single torus, and positive boundary a genus $2$
surface.

Recall that a \emph{compression body} $C$ is either a handlebody, or
the result of taking the product $S\times I$ of a closed, oriented
(possibly disconnected) surface $S$ and the interval $I=[0,1]$, and
attaching 1--handles to $S\times\{1\}$ such that the result is
connected.  The \emph{negative boundary} is $S\times\{0\}$ and is
denoted $\partial_{-}C$.  When $C$ is a handlebody, $\partial_{-}C
= \emptyset$.  The \emph{positive boundary} is $\partial
C \setminus \partial_{-}C$, and is denoted $\partial_{+}C$.

Let $C$ be the compression body for which $\partial_{-}C$ is a torus
and $\partial_{+}C$ is a genus $2$ surface.  We will call this the
\emph{$(1;2)$--compression body}, where the numbers $(1;2)$ refer to
the genus of the boundary components.  Note the $(1;2)$--compression
body is formed by taking a torus $T^2$ crossed with $[0,1]$ and
attaching a single 1--handle to $T^2 \times \{1\}$.  The 1--handle
retracts to a single arc, the core of the 1--handle.

Let $\tau$ be the union of the core of the 1--handle with two vertical
arcs in $S \times [0,1]$ attached to its endpoints.  Thus, $\tau$ is a
properly embedded arc in $C$, and $C$ is a regular neighborhood of
$\partial_- C \cup \tau$.  We refer to $\tau$ as the \emph{core
  tunnel} of $C$.  See Figure \ref{fig:comp-body}, which first
appeared in \cite{clp:length}.

\begin{figure}
	\centerline{\includegraphics[width=3in]{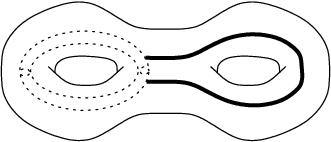}}
\caption{The $(1;2)$--compression body. The core tunnel is the thick
	line shown, with endpoints on the torus boundary.}
\label{fig:comp-body}
\end{figure}

The fundamental group of a $(1;2)$--compression body $C$ is isomorphic
to $(\ZZ\times\ZZ)\ast \ZZ$.  We will denote the generators of the
$\ZZ\times\ZZ$ factor by $\alpha$, $\beta$, and we will denote the
generator of the second factor by $\gamma$.

\subsection{Hyperbolic structures}

We are interested in the isotopy class of the arc $\tau$ when we put a
complete hyperbolic structure on the interior of the
$(1;2)$--compression body $C$.  We obtain such a structure by taking a
discrete, faithful representation $\rho\co \pi_1(C) \to {\rm
  PSL}(2,\CC)$ and considering the manifold $\HH^3/\rho(\pi_1(C))$.

\begin{define}
A discrete subgroup $\Gamma < {\rm PSL}(2,\CC)$ is \emph{geometrically
finite} if $\HH^3/\Gamma$ admits a finite--sided, convex fundamental
domain.  In this case, we will also say that the manifold
$\HH^3/\Gamma$ is \emph{geometrically finite}.
\label{def:gf-group}
\end{define}

The following gives a useful fact about geometrically finite groups in
$\PSL(2,\CC)$.

\begin{theorem}[Bowditch, Proposition 5.7 \cite{bowditch}]
If a subgroup $\Gamma < {\rm PSL}(2,\CC)$ is geometrically finite, then
every convex fundamental domain for $\HH^3/\Gamma$ has finitely many
faces.
\label{thm:bowditch}
\end{theorem}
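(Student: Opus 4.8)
I would argue as follows, along the lines of \cite{bowditch} (where this is Proposition~5.7). The plan is to pass from the fundamental-domain definition (Definition~\ref{def:gf-group}) to Bowditch's intrinsic characterisation of geometric finiteness, and then to show that an arbitrary convex fundamental domain cannot have faces that accumulate anywhere. Let $\Gamma$ be geometrically finite and let $P$ be any convex fundamental polyhedron for $\Gamma$. Then $\{\gamma P : \gamma\in\Gamma\}$ is a locally finite tessellation of $\HH^3$, so the faces of $P$ are locally finite \emph{in $\HH^3$}; hence $P$ has infinitely many faces if and only if its faces accumulate at some ideal point $\xi\in S^2_\infty=\partial\HH^3$. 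I would rule this out.

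Since $\Gamma$ acts properly discontinuously on $\HH^3\cup\Omega(\Gamma)$, every point of the domain of discontinuity $\Omega(\Gamma)$ has a neighbourhood meeting only finitely many translates $\gamma P$, so any accumulation point of faces must lie in the limit set $\Lambda(\Gamma)$. Now invoke the structure of $\Lambda(\Gamma)$ for geometrically finite $\Gamma$ (this is exactly the intrinsic characterisation of geometric finiteness from \cite{bowditch}): every point of $\Lambda(\Gamma)$ is either a conical limit point or a bounded parabolic point, and the bounded parabolic points fall into finitely many $\Gamma$-orbits. Faces cannot accumulate at a conical limit point $\xi$: by definition there is a sequence $\gamma_n\to\infty$ in $\Gamma$ carrying a fixed basepoint within bounded distance of a geodesic ray to $\xi$, so the projection of that ray to $\HH^3/\Gamma$ is recurrent; playing this recurrence off against local finiteness of the tessellation near a fixed compact set shows that only boundedly many faces of $P$ can come near the ray, contradicting accumulation at $\xi$. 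So faces can accumulate only at bounded parabolic points.

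It remains to bound the faces accumulating at a single bounded parabolic point $p$, which I place at $\infty$ in the upper half-space model. Its stabiliser $\Gamma_\infty$ is a rank-one or rank-two group of Euclidean translations acting cocompactly on $\Lambda(\Gamma)\setminus\{p\}$; in the rank-two case this forces the convex hull of the limit set to contain a full horoball at $p$, which is therefore a vertical cylinder, and $P$ restricted to a neighbourhood of $p$ is a convex fundamental domain for $\Gamma_\infty$ acting on that cylinder. Such a domain is the product of an interval with a convex fundamental domain in $\RR^2$ for the lattice $\Gamma_\infty$ --- a parallelogram or a centrally symmetric hexagon, hence a polygon with at most six sides --- so only finitely many faces of $P$ accumulate at $p$. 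The rank-one parabolic points, and the flaring ends of $\HH^3/\Gamma$, are handled by a variant of the same analysis together with the fact that geometric finiteness makes the conformal boundary $\Omega(\Gamma)/\Gamma$ a finite union of finite-type surfaces. Since there are finitely many orbits of bounded parabolic points (and finitely many flaring ends), $P$ has finitely many faces.

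The step I expect to be the main obstacle is the analysis at the parabolic points and flaring ends: verifying that, arbitrarily deep in a cusp, no elements of $\Gamma$ outside the cusp subgroup $\Gamma_\infty$ can contribute to the closure of $P$ --- this is where Bowditch's precisely invariant horoballs are used --- and then extracting from convexity in $\HH^3$ that the horospherical cross-sections of $P$ stabilise to a genuine Euclidean fundamental polygon, with the rank-one case requiring extra care because of the adjacent flaring wings. Ruling out accumulation at conical limit points is a cleaner recurrence argument, and finiteness of the faces meeting the compact core of $\HH^3/\Gamma$ is a soft consequence of local finiteness and discreteness.
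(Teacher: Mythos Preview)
The paper does not prove this theorem; it merely quotes it as Proposition~5.7 of \cite{bowditch} and uses it as a black box (to derive Corollary~\ref{cor:bowditch}). So there is no in-paper proof to compare your proposal against. Your outline is a reasonable sketch of Bowditch's own argument: reduce to showing faces cannot accumulate on $\partial\HH^3$, use proper discontinuity on $\Omega(\Gamma)$ to push any accumulation point into $\Lambda(\Gamma)$, invoke the conical/bounded-parabolic dichotomy, and then analyse each type separately.

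One small correction: your claim that a convex fundamental domain for a rank-two lattice in $\RR^2$ is ``a parallelogram or a centrally symmetric hexagon'' is true only for Dirichlet domains, not for arbitrary convex fundamental domains. What you actually need (and what suffices) is simply that such a domain has finite area, is convex, hence bounded, and therefore meets only finitely many of its translates by local finiteness of the tessellation --- so it is a polygon with finitely many sides. The specific combinatorics is irrelevant. Apart from that, your identification of the delicate step (precisely invariant horoballs at rank-one cusps, and controlling the flaring ends) matches where the work lies in Bowditch's paper.
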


\begin{define}
A discrete subgroup $\Gamma < \PSL(2,\CC)$ is \emph{minimally
parabolic} if it has no rank one parabolic subgroups.
\label{def:min-parabolic}
\end{define}

Thus for a discrete, faithful representation $\rho \co \pi_1(M) \to
\PSL(2,\CC)$, the image $\rho(\pi_1(M))$ will be minimally parabolic
if for all $g \in \pi_1(C)$, the element $\rho(g)$ is parabolic if and
only if $g$ is conjugate to an element of the fundamental group of a
torus boundary component of $M$.

\begin{define}
  A discrete, faithful representation $\rho\co\pi_1(M)\to \PSL(2,\CC)$
  is a \emph{minimally parabolic geometrically finite uniformization
    of $M$} if $\rho(\pi_1(M))$ is minimally parabolic and
  geometrically finite, and $\HH^3/\rho(\pi_1(M))$ is homeomorphic to
  the interior of $M$.
\label{def:gf}
\end{define}

\subsection{Isometric spheres and {F}ord domains}

To examine structures on $C$, we examine paths of Ford domains.  This
is similar to the technique of J{\o}rgensen \cite{jorgensen},
developed and expanded by Akiyoshi, Sakuma, Wada, and Yamashita
\cite{aswy:book}, to study hyperbolic structures on punctured torus
bundles.  Much of the basic material on Ford domains which we review
here can also be found in \cite{aswy:book}.

Throughout this subsection, let $M=\HH^3/\Gamma$ be a hyperbolic
manifold with a single rank 2 cusp, for example, the
$(1;2)$--compression body.  In the upper half space model for $\HH^3$,
assume the point at infinity in $\HH^3$ projects to the cusp.  Let $H$
be any horosphere about infinity.  Let $\Gamma_\infty < \Gamma$ denote
the subgroup that fixes $H$.  By assumption, $\Gamma_\infty \cong
\ZZ\times\ZZ$.

\begin{define}
For any $g \in \Gamma \setminus\Gamma_\infty$, $g^{-1}(H)$ will be a
horosphere centered at a point of $\CC$, where we view the boundary at
infinity of $\HH^3$ to be $\CC \cup \{\infty\}$.  Define the set
$I(g)$ to be the set of points in $\HH^3$ equidistant from $H$ and
$g^{-1}(H)$.  Then $I(g)$ is the \emph{isometric sphere} of $g$.
\label{def:isometric-sphere}
\end{define}

Note that $I(g)$ is well--defined even if $H$ and $g^{-1}(H)$ overlap.
It will be a Euclidean hemisphere orthogonal to the boundary $\CC$ of
$\HH^3$.

The following is well known, and follows from standard calculations.
We include a proof for completeness.

\begin{lemma}
If $$g=\mat{a&b\\c&d}\in {\rm PSL}(2,\CC),$$ then the center of the
Euclidean hemisphere $I({g^{-1}})$ is $g(\infty) = a/c$.  Its
Euclidean radius is $1/|c|$.
\label{lemma:iso-center-rad}
\end{lemma}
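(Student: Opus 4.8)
The plan is to compute directly with the half-space model, using the explicit formula for how a M\"obius transformation moves horospheres. Write $g = \mat{a&b\\c&d}$, so that $g^{-1} = \mat{d&-b\\-c&a}$ (this is the inverse in $\mathrm{PSL}(2,\CC)$ since $ad-bc=1$). We need to understand $I(g^{-1})$, the set of points equidistant from the chosen horosphere $H$ about $\infty$ and from $(g^{-1})^{-1}(H) = g(H)$. Since $g \notin \Gamma_\infty$, we have $c \neq 0$, so $g(\infty) = a/c$ is a finite point of $\CC$, and $g(H)$ is a genuine horosphere resting on $\CC$ tangent at $a/c$.

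First I would fix the normalization of $H$: without loss of generality take $H$ to be the horosphere at Euclidean height $1$, i.e.\ the plane $\{(z,t) : t = 1\}$ in upper half-space coordinates $(z,t) \in \CC \times \RR_{>0}$. The key computation is then the Euclidean height of $g(H)$ at its top, equivalently the Euclidean diameter of the horoball it bounds. One way to get this cleanly: the parabolic $z \mapsto z+1$ (or any unipotent fixing $\infty$) preserves $H$, and conjugating by $g$ shows $g(H)$ is preserved by a parabolic fixing $a/c$; but more efficiently, one just tracks a single point. The point $\infty \in \partial\HH^3$ lies ``on'' $H$ in the sense that $H$ is a horosphere about $\infty$; a convenient concrete point of $H$ to push forward is the point at Euclidean height $1$ over, say, where it is easiest — actually the cleanest route is the standard fact that if $H$ has height $h$ then $g(H)$ is a horoball of Euclidean diameter $h/|c|^2 \cdot (\text{something})$; I would instead derive it from the cocycle. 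Concretely, for a point $(z,t) \in \HH^3$, the imaginary part (height) transforms under $g$ by
\[
\mathrm{Im}\big(g(z,t)\big) \;=\; \frac{t}{|cz+d|^2 + |c|^2 t^2}.
\]
Applying this to $H = \{t=1\}$ and maximizing the right-hand side over $z \in \CC$ (the max is at $z = -d/c$, giving value $1/|c|^2$) shows $g(H)$ is the horoball of Euclidean diameter $1/|c|^2$ tangent to $\CC$ at $a/c = g(\infty)$.

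Next, with both horospheres in hand — $H$ at height $1$, and $g(H)$ of Euclidean diameter $r^2 := 1/|c|^2$ tangent at the point $p := a/c$ — I compute the equidistant locus $I(g^{-1})$. A point $(z,t)$ has hyperbolic distance to $H$ given by $|\log t|$, and hyperbolic distance to the horoball $g(H)$ (of diameter $r^2$ at $p$) given by $\log\!\big((|z-p|^2 + t^2)/(r^2 t)\big)$ when outside that horoball. Setting the two distances equal, the locus where $t = (|z-p|^2+t^2)/(r^2 t)$ simplifies to $|z-p|^2 + t^2 = r^2$, which is exactly the Euclidean hemisphere of radius $r = 1/|c|$ centered at $p = a/c$. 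One should check the sign/branch of the logarithms is consistent (i.e.\ that on this hemisphere one is genuinely ``above'' $H$ or the computation is symmetric), but this is routine. This yields both claims at once: center $g(\infty) = a/c$ and radius $1/|c|$.

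The main obstacle — really the only delicate point — is bookkeeping the horosphere-displacement formula correctly and being careful that $I(g^{-1})$, defined via the equidistance condition, is computed using $(g^{-1})^{-1}(H) = g(H)$ rather than $g^{-1}(H)$; it is easy to invert $g$ one too many or too few times and land on center $d/(-c)$ or radius $1/|a|$ instead. I would therefore state the inverse explicitly at the start and carry the computation through for $g^{-1}$ from the definition, rather than quoting the displaced-horoball formula for $g$ directly. Everything else is a short direct calculation in coordinates, with no conceptual difficulty; the normalization $H = \{t=1\}$ costs nothing since $I(g)$ does not depend on the choice of $H$ about $\infty$.
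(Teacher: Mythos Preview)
Your argument is correct and follows the same overall strategy as the paper --- locate $g(H)$ as a horosphere of Euclidean diameter $1/|c|^2$ tangent at $a/c$, then find the equidistant set --- but the execution differs. The paper's proof is more economical: having already asserted that $I(g^{-1})$ is a Euclidean hemisphere, it works only along the vertical geodesic through $a/c$, noting that this geodesic meets $g(H)$ at height $h_0$ and $H$ at height $h_1 = 1/(|c|^2 h_0)$, so the equidistant point sits at height $\sqrt{h_0 h_1} = 1/|c|$. Your version is more self-contained, since your Busemann-function computation produces the full hemisphere equation $|z-p|^2 + t^2 = r^2$ directly rather than assuming the shape in advance. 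One small slip: the intermediate equation you wrote, $t = (|z-p|^2+t^2)/(r^2 t)$, should have $1/t$ on the left (taking $|\log t| = \log(1/t)$ for $t<1$) to yield the stated simplification; as written it gives $(r^2-1)t^2 = |z-p|^2$, but this is clearly just a typo since your conclusion is right.
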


\begin{proof}
The fact that the center is $g(\infty)=a/c$ is clear.

Consider the geodesic running from $\infty$ to $g(\infty)$.  It
consists of points of the form $(a/c, t)$ in $\CC \times \RR^+ \cong
\HH^3$.  It will meet the horosphere $H$ about infinity at some height
$t=h_1$, and the horosphere $g(H)$ at some height $t=h_0$.  The radius
of the isometric sphere $I({g^{-1}})$ is the height of the point
equidistant from points $(a/c, h_0)$ and $(a/c, h_1)$.

Note that $g^{-1}( g(H)) = H$, and hence $h_1$ is given by the height
of $g^{-1}(a/c, h_0)$, which can be computed to be $(-d/c,
1/(|c|^2h_0))$.  Thus $h_1 =1/(|c|^2h_0)$.  Then the point equidistant
from $(a/c, h_0)$ and $(a/c, 1/(|c|^2h_0))$ is the point of height $h
= 1/|c|$.
\end{proof}

\begin{define}
  Let $B(g)$ denote the \emph{open} half ball bounded by $I(g)$, and
  define $\F$ to be the set
  $$\F = \HH^3 \setminus \bigcup_{g \in \Gamma \setminus \Gamma_\infty} B(g).$$ 
  Note $\F$ is invariant under $\Gamma_\infty$, which acts by
  Euclidean translations on $\HH^3$.  We call $\F$ the
  \emph{equivariant Ford domain}.
  \label{def:F}
\end{define}

When $H$ bounds a horoball $H_\infty$ that projects to an embedded
horoball neighborhood about the rank 2 cusp of $M$, $\F$ is the set of
points in $\HH^3$ which are at least as close to $H_\infty$ as to any
of its translates under $\Gamma \setminus \Gamma_\infty$.  Provided
$\Gamma$ is discrete, such an embedded horoball neighborhood of the
cusp always exists, by the Margulis lemma.

\begin{define}
A \emph{vertical fundamental domain for $\Gamma_\infty$} is a
fundamental domain for the action of $\Gamma_\infty$ cut out by
finitely many vertical geodesic planes in $\HH^3$.
\end{define}

\begin{define}
A \emph{Ford domain} of $M$ is the intersection of $\F$ with a
vertical fundamental domain for the action of $\Gamma_\infty$.
\label{def:ford-domain}
\end{define}

A Ford domain is not canonical because the choice of fundamental
domain for $\Gamma_\infty$ is not canonical.  However, the equivariant
Ford domain $\F$ in $\HH^3$ is canonical, and for purposes of this
paper, $\F$ is often more useful than the actual Ford domain.

Note that Ford domains are convex fundamental domains (\emph{cf.}
\cite[Proposition A.1.2]{aswy:book}).  Thus we have the following
corollary of Bowditch's Theorem \ref{thm:bowditch}.

\begin{cor}
$M=\HH^3/\Gamma$ is geometrically finite if and only if a Ford domain
  for $M$ has a finite number of faces.
\label{cor:bowditch}
\end{cor}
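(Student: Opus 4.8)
The plan is to deduce Corollary~\ref{cor:bowditch} directly from Bowditch's Theorem~\ref{thm:bowditch}, using the fact (noted just above the statement, \emph{cf.} \cite[Proposition A.1.2]{aswy:book}) that a Ford domain is a convex fundamental domain for $\Gamma$ acting on $\HH^3$. One direction is essentially immediate: if $M = \HH^3/\Gamma$ is geometrically finite, then $\Gamma$ is geometrically finite by Definition~\ref{def:gf-group}, and Theorem~\ref{thm:bowditch} says \emph{every} convex fundamental domain for $\Gamma$ has finitely many faces; since a Ford domain is such a domain, it has finitely many faces. So the content is really the converse.

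For the converse, suppose a Ford domain $P$ for $M$ has finitely many faces. By Definition~\ref{def:ford-domain}, $P = \F \cap V$, where $V$ is a vertical fundamental domain for $\Gamma_\infty$ cut out by finitely many vertical geodesic planes, and $\F$ is the equivariant Ford domain of Definition~\ref{def:F}. First I would observe that $P$ is convex: $\F$ is an intersection of half-spaces (the complements of the open half-balls $B(g)$, together with the region above all isometric spheres), hence convex, and $V$ is convex, so $P = \F \cap V$ is convex. Next I would check that $P$ is a fundamental domain for $\Gamma$: the region $\F$ is a fundamental domain for the action of $\Gamma$ modulo $\Gamma_\infty$ (a point of $\HH^3$ lies in $\F$ iff it is at least as close to the chosen horoball $H_\infty$ about the cusp as to any $\Gamma$-translate of $H_\infty$, by the discussion following Definition~\ref{def:F}), and intersecting with the fundamental domain $V$ for $\Gamma_\infty$ cuts $\F$ down to a fundamental domain for all of $\Gamma$. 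Thus $P$ is a finite-sided convex fundamental domain for $\Gamma$, so $\Gamma$ — and hence $M$ — is geometrically finite by Definition~\ref{def:gf-group}.

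The one point that needs a little care — and which I expect to be the main (minor) obstacle — is bookkeeping about faces versus the finitely many "artificial" faces contributed by the vertical planes bounding $V$. A Ford domain $P$ has two kinds of faces: those lying on isometric spheres $I(g)$ (the "real" faces, corresponding to elements $g \in \Gamma \setminus \Gamma_\infty$) and those lying on the vertical planes cutting out $V$ (at most finitely many, by definition of a vertical fundamental domain). So "$P$ has finitely many faces" is equivalent to "$\F$ meets $V$ in finitely many isometric-sphere faces", which — since the isometric spheres are $\Gamma_\infty$-equivariant and $V$ is a fundamental domain for $\Gamma_\infty$ — is equivalent to saying only finitely many $\Gamma_\infty$-orbits of isometric spheres contribute faces to $\F$. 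Either way $P$ is genuinely finite-sided as a polyhedron, which is all that Definition~\ref{def:gf-group} requires. Since a Ford domain is a convex fundamental domain regardless of whether $M$ is geometrically finite, no circularity arises: the forward direction uses Theorem~\ref{thm:bowditch}, the backward direction uses only that a finite-sided convex fundamental domain exists, and the equivalence follows.
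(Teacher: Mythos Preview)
Your proposal is correct and is exactly the argument the paper intends: the corollary is stated without proof, immediately after the remark that Ford domains are convex fundamental domains, so the reader is meant to combine that fact with Theorem~\ref{thm:bowditch} for one direction and with Definition~\ref{def:gf-group} for the other, just as you do. Your extra paragraph distinguishing the isometric-sphere faces from the finitely many vertical faces of $V$ is a helpful clarification but not something the paper spells out.
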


\begin{example}
Let $c\in \CC$ be any complex number such that $|c|>2$, and let $a$
and $b$ in $\CC$ be linearly independent over $\RR$ with $|a|>2|c|$,
$|b|>2|c|$.  Let $\rho\co\pi_1(C) \to \PSL(2,\CC)$ be the
representation defined by
$$\rho(\alpha) = \mat{1&a\\0&1}, \quad \rho(\beta) =
\mat{1&b\\0&1}, \quad \rho(\gamma)=\mat{c&-1\\1&0}.$$
(Recall that $\alpha$ and $\beta$ denote the generators of the $\ZZ
\times \ZZ$ factor of $\pi_1(C)$, and $\gamma$
denotes an additional generator of $\pi_1(C)$.)

By Lemma \ref{lemma:iso-center-rad}, $I({\rho(\gamma)})$ has center
$0$, radius $1$, and $I({\rho(\gamma^{-1})})$ has center $c\in\CC$,
radius $1$.  Since $|c|>2$, $I({\rho(\gamma)})$ will not meet
$I({\rho(\gamma^{-1})})$.  By choice of $\rho(\alpha)$, $\rho(\beta)$,
all translates of $I({\rho(\gamma)})$ and $I({\rho(\gamma^{-1})})$ under
$\Gamma_\infty$ are disjoint.

We will see in Lemma \ref{lemma:simple-ford} that $\rho$ gives a
minimally parabolic geometrically finite uniformization of $C$, and
that for this example, $\F$ consists of the exterior of (open)
half--spaces $B(\rho(\gamma))$ and $B({\rho(\gamma^{-1})})$, bounded
by $I(\rho(\gamma))$ and $I(\rho({\gamma^{-1}}))$, respectively, as
well as translates of these two isometric spheres under
$\Gamma_\infty$.  Thus we will show that the Ford domain for this
example is as shown in Figure \ref{fig:simple-ford}.  Before proving
this fact, we need additional definitions and lemmas.  We use this
example to illustrate these definitions and lemmas.
\label{ex:simple-ford}
\end{example}

\begin{figure}
\begin{center}
\begin{tabular}{ccc}
	\makebox{\begin{tabular}{c}
				\input{figures/simple-forddomain.pstex_t} \\
			\end{tabular}	} & 
	\hspace{1in} &
	\makebox{\begin{tabular}{c}
			\vspace{-0.2in} \\
			\includegraphics[width=1.5in]{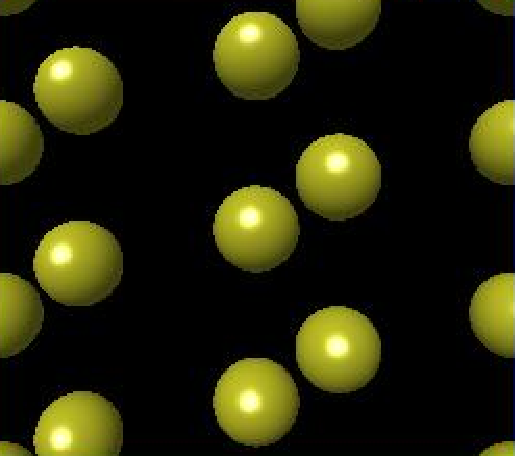}
			\end{tabular}}
\end{tabular}
\end{center}
\caption{Left: Schematic picture of the Ford domain of Example
  \ref{ex:simple-ford}.  Right: Three dimensional view of $\F$ in
  $\HH^3$, for $c=2+i$, $a=6+2i$, and $b=4.5i$.}
\label{fig:simple-ford}
\end{figure}

\subsection{Visible faces and {F}ord domains}

Let $M = \HH^3/\Gamma$ be a hyperbolic manifold with a single rank two
cusp, and let $\Gamma_\infty < \Gamma$ denote a maximal rank two
parabolic subgroup, which we may assume fixes the point at infinity in
$\HH^3$.  Notice that $\F$, the equivariant Ford domain of $M$, has a
natural cell structure.

\begin{define}
  Let $g \in \Gamma \setminus \Gamma_\infty$.  We say $I(g)$ is
  \emph{visible} if there exists a 2--dimensional cell of the cell
  structure on $\F$ contained in $I(g)$.

  Similarly, we say the intersection of isometric spheres $I(g_1) \cap
  \dots \cap I(g_n)$ is \emph{visible} if there exists a cell of $\F$
  contained in $I(g_1) \cap \dots \cap I(g_n)$ of the same dimension
  as $I(g_1) \cap \dots \cap I(g_n)$.
\label{def:visible}
\end{define}

Thus in Example \ref{ex:simple-ford}, we claim that the only visible
isometric spheres are $I(\rho(\gamma))$, $I(\rho({\gamma^{-1}}))$, and
the translates of these under $\Gamma_\infty$.  There are no visible
edges for this example.

There is an alternate definition of visible, Lemma
\ref{lemma:alt-visible}.  Let $H$ be a horosphere about infinity that
bounds a horoball which is embedded under the projection to $M$.

\begin{lemma}
For $g \in \Gamma \setminus \Gamma_\infty$, $I(g)$ is visible if and
only if there exists an open set $U \subset \HH^3$ such that $U\cap
I(g)$ is not empty, and for every $x \in U\cap I(g)$ and every $h \in
\Gamma \setminus \Gamma_\infty$, the hyperbolic distances satisfy
$$d(x,h^{-1}(H)) \geq d(x, H) = d(x, g^{-1}H).$$

Similarly, if $I(g) \cap I(h)$ is not empty, then it is visible if and
only if there exists an open $U \subset \HH^3$ such that $U \cap I(g)
\cap I(h)$ is not empty, and for every $x\in U\cap I(g) \cap I(h)$ and
every $k \in \Gamma \setminus \Gamma_\infty$,
$$d(x, k^{-1}H) \geq d(x,H) = d(x, g^{-1}H) = d(x, h^{-1}H).$$
\label{lemma:alt-visible}
\end{lemma}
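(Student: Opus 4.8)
The plan is to unwind Definition~\ref{def:visible} by interpreting the cell structure on $\F$ in terms of the distance functions $x \mapsto d(x, h^{-1}(H))$, and then to argue locally. Recall from Definition~\ref{def:F} that $\F$ is the intersection of the closed exteriors of all the half-balls $B(h)$, and that $I(h)$ is precisely the set of points equidistant from $H$ and $h^{-1}(H)$. The key elementary observation, which I would establish first, is that for $x \in \HH^3$ one has $x \in \overline{B(h)}$ if and only if $d(x, h^{-1}(H)) \le d(x, H)$, with equality exactly on $I(h)$; this is immediate once one notes that moving toward the center of $I(h)$ along a vertical line decreases $d(\,\cdot\,, h^{-1}(H))$ while $d(\,\cdot\,, H)$ depends only on height. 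Consequently, $x \in \F$ precisely when $d(x, h^{-1}(H)) \ge d(x, H)$ for all $h \in \Gamma \setminus \Gamma_\infty$, and the face of $\F$ carried by $I(g)$ consists of those $x \in \F$ lying on $I(g)$, i.e.\ those $x$ with $d(x, g^{-1}(H)) = d(x,H)$ and $d(x, h^{-1}(H)) \ge d(x, H)$ for all other $h$.

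\medskip

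For the forward direction of the first statement, suppose $I(g)$ is visible, so there is a $2$-cell $\sigma$ of $\F$ contained in $I(g)$. A $2$-cell is relatively open in $I(g)$, so I can choose a point $x_0$ in the relative interior of $\sigma$ and a small open ball $U \subset \HH^3$ about $x_0$ such that $U \cap I(g) = U \cap \sigma \subset \F$. By the distance reformulation above, every point of $U \cap I(g)$ lies in $\F$, which is exactly the inequality $d(x, h^{-1}(H)) \ge d(x,H) = d(x, g^{-1}(H))$ for all $h$ and all $x \in U \cap I(g)$. Here I should also check $U \cap I(g) \ne \emptyset$, which holds since $x_0 \in U \cap I(g)$.

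\medskip

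For the converse, suppose such a $U$ exists. Then $U \cap I(g) \subset \F$ by the distance reformulation, so $U \cap I(g)$ is a nonempty relatively open subset of $I(g)$ lying in $\F \cap I(g)$. It remains to see that $\F \cap I(g)$ contains a genuine $2$-cell of the cell structure on $\F$, not merely a lower-dimensional piece. Since $\F$ is a convex polyhedron (its boundary faces lying on isometric spheres and on the vertical walls, though the walls are irrelevant here because $U$ is a ball in $\HH^3$ not touching infinity), the intersection $\F \cap I(g)$ is a union of faces of $\F$; a nonempty relatively open subset of the sphere $I(g)$ can be contained in this union only if some $2$-cell of $\F$ lies in $I(g)$, for otherwise $\F \cap I(g)$ would be a locally finite union of cells of dimension $\le 1$ and could not contain an open subset of the $2$-manifold $I(g)$. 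This gives visibility of $I(g)$. The second statement, concerning $I(g) \cap I(h)$, follows by the identical argument applied to the codimension-one (within $\HH^3$, i.e.\ generically $1$-dimensional) intersection $I(g) \cap I(h)$: a point $x$ lies on $I(g) \cap I(h)$ and in $\F$ iff $d(x, g^{-1}H) = d(x, h^{-1}H) = d(x,H) \le d(x, k^{-1}H)$ for all $k$, and the dimension-counting argument shows that a relatively open subset of $I(g) \cap I(h)$ inside $\F$ forces a cell of $\F$ of the full dimension of $I(g)\cap I(h)$ to lie in $I(g) \cap I(h)$.

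\medskip

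The step I expect to be the main obstacle is the dimension-counting argument in the converse direction: one must know that $\F$ has a well-behaved (locally finite, polyhedral) cell structure near the points of $U \cap I(g)$ so that ``$\F \cap I(g)$ contains a nonempty open subset of $I(g)$'' genuinely upgrades to ``a $2$-cell of $\F$ lies in $I(g)$.'' For geometrically finite $\Gamma$ this is guaranteed by Corollary~\ref{cor:bowditch} and the finiteness of the Ford domain, but to keep the lemma general one should instead invoke local finiteness of the family of isometric spheres (a consequence of discreteness of $\Gamma$ together with the Margulis lemma, as noted after Definition~\ref{def:F}), which ensures that near any point only finitely many isometric spheres are relevant and hence the cell structure on $\F$ is locally that of a finite convex polyhedron. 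Once that local finiteness is in hand, the rest is the routine distance bookkeeping sketched above.
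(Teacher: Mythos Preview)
Your proposal is correct and follows essentially the same route as the paper's proof: both reduce visibility to the distance characterization $x \in \F \iff d(x,H) \leq d(x,k^{-1}H)$ for all $k$, and both observe that a cell of full dimension in $I(g)$ (or $I(g)\cap I(h)$) corresponds exactly to an open set $U$ meeting $I(g)$ inside $\F$. The paper's argument is considerably terser---it asserts the equivalence between ``contains a cell of the same dimension'' and ``some open $U$ intersects it in a cell of $\F$'' in one sentence---whereas you carefully justify the converse via the dimension-counting argument and flag the need for local finiteness of the cell structure; this extra care is warranted and not redundant, but it does not represent a different strategy.
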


\begin{proof}
An isometric sphere, or interesction of isometric spheres, is visible
if and only if it contains a cell of $\F$ of the same dimension.  This
will happen if and only if there is some open set $U$ in $\HH^3$ which
intersects the isometric sphere, or intersections of isometric
spheres, in the cell of $\F$ in $\HH^3$.  The result follows now by
definition of $\F$: a point $x$ is in $\F$ if and only if it is not
contained in any open half space $B(k)$, $k \in \Gamma \setminus
\Gamma_\infty$, if and only if $d(x, H) \leq d(x, k^{-1}H)$.
\end{proof}

We can say something even stronger for isometric spheres:

\begin{lemma}
For $\Gamma$ discrete, the following are equivalent.
\begin{enumerate}
\item\label{item:1vis} The isometric sphere $I(g)$ is visible.
\item\label{item:2vis} There exists an open set $U \subset \HH^3$ such
  that $U \cap I(g)$ is not empty and for any $x \in U \cap I(g)$ and
  any $h\in \Gamma\setminus (\Gamma_\infty \cup \Gamma_\infty g)$,
$$d(x, h^{-1}H) > d(x,H) = d(x,g^{-1}H).$$
\item\label{item:3vis} $I(g)$ is not contained in  $\bigcup_{h \in \Gamma
  \setminus(\Gamma_\infty \cup \Gamma_\infty g)} \overline{B(h)}$.
\end{enumerate}
\label{lemma:visible-strict-inequality}
\end{lemma}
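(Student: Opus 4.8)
The plan is to prove this as a cycle of implications, $(\ref{item:1vis}) \Rightarrow (\ref{item:2vis}) \Rightarrow (\ref{item:3vis}) \Rightarrow (\ref{item:1vis})$, relying on Lemma \ref{lemma:alt-visible} to handle the passage between the geometric ``distance'' formulation and the combinatorial ``cell of $\F$'' formulation. The one genuinely new content beyond Lemma \ref{lemma:alt-visible} is that we may upgrade the non-strict inequality $d(x,h^{-1}H)\geq d(x,H)$ to a \emph{strict} inequality once we exclude the cosets $\Gamma_\infty$ and $\Gamma_\infty g$; this is where discreteness of $\Gamma$ is essential.

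For $(\ref{item:2vis}) \Rightarrow (\ref{item:3vis})$: if $x\in U\cap I(g)$ satisfies the strict inequalities, then for every $h\in \Gamma\setminus(\Gamma_\infty\cup\Gamma_\infty g)$ we have $d(x,H)<d(x,h^{-1}H)$, which says precisely that $x\notin \overline{B(h)}$ (a point lies in the closed half ball $\overline{B(h)}$ iff it is at least as close to $h^{-1}H$ as to $H$). Hence $x\in I(g)$ witnesses that $I(g)\not\subset\bigcup_{h}\overline{B(h)}$. For $(\ref{item:3vis}) \Rightarrow (\ref{item:1vis})$: suppose some $x\in I(g)$ avoids all the $\overline{B(h)}$ with $h\notin\Gamma_\infty\cup\Gamma_\infty g$. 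Since these closed half balls, together with the translates $\overline{B(g')}$ for $g'\in\Gamma_\infty g$, form a locally finite family near $x$ by discreteness (only finitely many isometric spheres come within any fixed distance of a given point, as their radii $1/|c|$ are bounded and their centers are discrete), there is a neighborhood $U$ of $x$ meeting only finitely many $\overline{B(h)}$, none of them for $h\notin\Gamma_\infty\cup\Gamma_\infty g$. Shrinking $U$, every point of $U\cap I(g)$ is exterior to all such $B(h)$, so $U\cap I(g)\subset \F$ up to the $\Gamma_\infty$-translates of $I(g)$ itself; this exhibits a $2$--cell of $\F$ inside $I(g)$, so $I(g)$ is visible. (Note $\Gamma_\infty g$ may be ignored because translates of $I(g)$ by $\Gamma_\infty$ lie on $\F$, not strictly inside the half balls.)

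The main obstacle is the implication $(\ref{item:1vis}) \Rightarrow (\ref{item:2vis})$, i.e. promoting ``$\geq$'' to ``$>$''. By Lemma \ref{lemma:alt-visible} visibility already gives an open $U$ on which $d(x,h^{-1}H)\geq d(x,H)=d(x,g^{-1}H)$ for all $h$. Suppose for contradiction that no sub-open-set of $U\cap I(g)$ achieves strict inequality for all excluded $h$; then on a dense subset of $U\cap I(g)$ equality $d(x,h^{-1}H)=d(x,H)$ holds for some $h\notin\Gamma_\infty\cup\Gamma_\infty g$. Equality on a set with nonempty interior in $I(g)$ forces $I(h)$ and $I(g)$ to share a $2$--dimensional piece, hence (two distinct Euclidean hemispheres orthogonal to $\CC$ meeting in a $2$--dimensional set must coincide) $I(h)=I(g)$; by Lemma \ref{lemma:iso-center-rad} this means $h^{-1}$ and $g^{-1}$ have the same center $h^{-1}(\infty)=g^{-1}(\infty)$ and radius, so $gh^{-1}$ fixes $\infty$, i.e. $gh^{-1}\in\Gamma_\infty$, contradicting $h\notin\Gamma_\infty g$. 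To make the ``dense subset'' argument rigorous one invokes local finiteness: only finitely many $h$ have $I(h)$ meeting $U$, so the bad locus is a finite union of proper analytic subsets of $I(g)$ and its complement in $U\cap I(g)$ is open, dense, and nonempty — on it all the strict inequalities of $(\ref{item:2vis})$ hold simultaneously.
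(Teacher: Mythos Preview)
Your proof is correct and follows essentially the same line as the paper's: the heart of $(1)\Rightarrow(2)$ is the observation that $I(h)=I(g)$ forces $h\in\Gamma_\infty g$, which both arguments establish in the same way (equal isometric spheres have the same center $g^{-1}(\infty)=h^{-1}(\infty)$, so $g h^{-1}$ fixes $\infty$). Two organizational differences are worth noting. First, for $(3)\Rightarrow(1)$ the paper argues by contrapositive and thereby avoids local finiteness altogether: if $I(g)$ is not visible, then every $x\in I(g)$ either lies outside $\F$ (hence in some open $B(h)$) or lies in a cell of $\F$ of dimension at most $1$ (hence on some $I(h)$ with $I(h)\neq I(g)$, so $h\notin\Gamma_\infty g$); in either case $x\in\overline{B(h)}$. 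This is shorter than your direct argument. Second, your explicit appeal to local finiteness in $(1)\Rightarrow(2)$ is actually an improvement: the paper as written only rules out a \emph{single} $h$ achieving equality on all of $U\cap I(g)$, and does not explain why the union of the (individually $1$--dimensional) equality loci $I(h)\cap I(g)$ over all relevant $h$ still misses an open subset of $U\cap I(g)$. Your local--finiteness step closes that.

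One small caution: your stated reason for local finiteness (``their radii $1/|c|$ are bounded and their centers are discrete'') is not quite right, since the centers $h^{-1}(\infty)$ can accumulate on the limit set in $\CC$. The correct reason is that an isometric sphere meeting a fixed compact set in $\HH^3$ must have Euclidean radius bounded \emph{below}, and the corresponding horoballs $h^{-1}H_\infty$ are disjoint with Euclidean diameter bounded below and lie in a bounded region of $\CC$, hence are finite in number. The conclusion you use is standard and true; only the one--line justification should be adjusted.
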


\begin{proof}
If \eqref{item:2vis} holds, then Lemma \ref{lemma:alt-visible} implies
$I(g)$ is visible.  Conversely, suppose $I(g)$ is visible.  Let $U$ be
as in Lemma \ref{lemma:alt-visible}, so that for all $x \in U\cap
I(g)$, and all $h\in \Gamma \setminus \Gamma_\infty$, $d(x,h^{-1}H)
\geq d(x,H) = d(x, g^{-1}H)$.  Suppose there is some $h \in \Gamma
\setminus \Gamma_\infty$ such that for all $x \in U \cap I(g)$ we have
equality: $d(x, h^{-1}H) = d(x,H) = d(x, g^{-1}H)$.  Then the
isometric spheres $I(h)$ and $I(g)$ must agree on an open subset, hence
they must agree everywhere.  In particular, their centers must agree:
$g^{-1}(\infty) = h^{-1}(\infty)$.

Now, notice that $g^{-1} \Gamma_\infty g$ is the subgroup of $\Gamma$
fixing $g^{-1}(\infty)$, since $\alpha$ fixes $g^{-1}(\infty)$ if and
only if $g\alpha g^{-1}$ fixes infinity, so lies in $\Gamma_\infty$.
Next note that since $I(g) = I(h)$, $g^{-1}h$ fixes $g^{-1}(\infty)$.
So $g^{-1}h \in g^{-1}\Gamma_\infty g$.  Thus $h\in \Gamma_\infty g$.
We have shown \eqref{item:1vis} if and only if \eqref{item:2vis}.  

Finally, \eqref{item:2vis} clearly implies \eqref{item:3vis}.  If
$I(g)$ is not visible, then for any $x \in I(g)$, either $x \nin \F$,
which implies $x \in \bigcup_{h \in \Gamma \setminus(\Gamma_\infty
  \cup \Gamma_\infty g)} \overline{B(h)}$, or $x$ is in a cell of $\F$
with dimension at most $1$.  In this case, $x \in I(h)$ for some $h
\in \Gamma \setminus (\Gamma_\infty \cup \Gamma_\infty g)$.  Thus
\eqref{item:3vis} implies \eqref{item:1vis}.
\end{proof}

Notice that in the above proof, we showed that if two isometric
spheres $I(g)$ and $I(h)$ agree, then $h \in \Gamma_\infty g$.  It is
clear that if $h \in \Gamma_\infty g$, then $I(g) = I(h)$.

We now present two results on visible faces of the Ford domain.
Again these are well known, but we include proofs for completeness.

\begin{lemma}
Let $\Gamma$ be a discrete, torsion free subgroup of $\PSL(2,\CC)$
with a rank two parabolic subgroup $\Gamma_\infty$ fixing the point at
infinity, and let $g\in \Gamma \setminus \Gamma_\infty$.  Then $I(g)$
is visible if and only if $I({g^{-1}})$ is visible.  Moreover, $g$
takes $I(g)$ isometrically to $I({g^{-1}})$, sending the half space
$B(g)$ bounded by $I(g)$ to the exterior of the half space
$B(g^{-1})$.
\label{lemma:g-ginv}
\end{lemma}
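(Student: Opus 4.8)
The plan is to exploit the symmetry of the isometric sphere construction under inversion. First I would establish the geometric claim: if $g = \left(\begin{smallmatrix} a & b \\ c & d\end{smallmatrix}\right)$, then by Lemma \ref{lemma:iso-center-rad}, $I(g)$ has center $g^{-1}(\infty) = -d/c$ and radius $1/|c|$, while $I(g^{-1})$ has center $g(\infty) = a/c$ and radius $1/|c|$. The map $g$ is a hyperbolic isometry; I would check directly that it sends the hemisphere $I(g)$ to the hemisphere $I(g^{-1})$. The clean way to see this is via the defining property: $I(g)$ is the locus equidistant from $H$ and $g^{-1}(H)$, so $g(I(g))$ is the locus equidistant from $g(H)$ and $H$, which is exactly $I(g^{-1})$. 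The same equidistance bookkeeping shows $g$ carries the side of $I(g)$ nearer to $g^{-1}(H)$ (which is $B(g)$, the open half-ball) to the side of $I(g^{-1})$ nearer to $g(H)$; but the side of $I(g^{-1})$ nearer to $g(H)$ is the \emph{exterior} of $B(g^{-1})$, since $B(g^{-1})$ is by definition the side nearer to $H$. This gives the "moreover" clause.

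For the visibility equivalence, I would use Lemma \ref{lemma:alt-visible} together with the isometry just established. Suppose $I(g)$ is visible. Then there is an open set $U$ with $U \cap I(g) \neq \emptyset$ such that for all $x \in U \cap I(g)$ and all $h \in \Gamma \setminus \Gamma_\infty$, $d(x, h^{-1}H) \geq d(x,H) = d(x, g^{-1}H)$. Apply $g$: set $U' = g(U)$, an open set, and note $g(U \cap I(g)) = U' \cap I(g^{-1})$ is nonempty. For $y = g(x) \in U' \cap I(g^{-1})$ and any $h' \in \Gamma \setminus \Gamma_\infty$, I want $d(y, h'^{-1}H) \geq d(y, H) = d(y, (g^{-1})^{-1}H) = d(y, gH)$. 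Since $g$ is an isometry, $d(y, gH) = d(x, H)$ and $d(y, H) = d(gx, H) = d(x, g^{-1}H)$, which are equal by hypothesis; and $d(y, h'^{-1}H) = d(x, g^{-1}h'^{-1}H) = d(x, (h'g)^{-1}H) \geq d(x, H) = d(y, gH)$ provided $h'g \in \Gamma \setminus \Gamma_\infty$. The element $h'g$ fails to lie in $\Gamma \setminus \Gamma_\infty$ only when $h'g \in \Gamma_\infty$, i.e. $h' \in \Gamma_\infty g^{-1}$; for such $h'$ one has $I(h') = I(g^{-1})$ by the remark following Lemma \ref{lemma:visible-strict-inequality}, so $d(y, h'^{-1}H) = d(y, gH)$ and the inequality holds trivially. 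Thus $U'$ witnesses visibility of $I(g^{-1})$ via Lemma \ref{lemma:alt-visible}. Reversing the roles of $g$ and $g^{-1}$ gives the converse.

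I expect the main subtlety to be the bookkeeping around which cosets are excluded — specifically, handling the case $h'g \in \Gamma_\infty$ cleanly, and making sure the translation of the distance inequalities through the isometry $g$ lands on exactly the inequality that Lemma \ref{lemma:alt-visible} demands for $I(g^{-1})$ (note that the "base" distances are to $H$, not to a translate, so one must track that $g(H)$ plays the role of $(g^{-1})^{-1}(H)$ correctly). There is also a minor point: Lemma \ref{lemma:alt-visible} as stated requires the horosphere $H$ to bound an embedded horoball, and applying $g$ moves $H$; but since $g \in \Gamma$, the image $g(H)$ is $\Gamma$-equivalent to $H$ and the embeddedness of the cusp neighborhood is unaffected, and in any case one can phrase the whole argument in terms of the equivariant object $\F$, whose definition (Definition \ref{def:F}) is manifestly $\Gamma_\infty$-invariant and whose visible cells are permuted by all of $\Gamma$ in the obvious way. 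Using that more structural viewpoint — $g$ maps $\F \cap I(g)$ onto $\F \cap I(g^{-1})$ because $g(B(h)) $ ranges over the half-balls $B(h')$ as $h' = g h g^{-1}$ ranges appropriately — may in fact be the shortest route, and I would present it that way, falling back on the explicit distance computation only if a detail needs pinning down.
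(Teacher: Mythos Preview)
Your proposal is correct and follows essentially the same route as the paper: establish $g(I(g))=I(g^{-1})$ via the equidistance characterization, deduce the $B(g)\mapsto$ exterior-of-$B(g^{-1})$ statement, and then transport the open set $U$ from Lemma~\ref{lemma:alt-visible} through the isometry $g$. If anything, you are more careful than the paper: the published proof writes $d(y,gh^{-1}H)\geq d(y,H)$ for $h\in\Gamma\setminus\Gamma_\infty$ and declares this sufficient, whereas you explicitly close the gap for $h'\in\Gamma_\infty g^{-1}$ by noting $I(h')=I(g^{-1})$ there.
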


\begin{proof}
Let $H$ be a horosphere about infinity in $\HH^3$ that bounds a
horoball which projects to an embedded neighborhood of the cusp of
$M$.

First, note that under $g$, $I(g)$ is mapped isometrically to
$I({g^{-1}})$, since $g$ takes $H$ to $g(H)$, and $g^{-1}(H)$ to $H$,
and hence takes $I(g)$ to the set of points equidistant from these two
horospheres.  This is the isometric sphere $I({g^{-1}})$.  Note the
half space $B(g)$, which contains $g^{-1}(H)$, must be mapped to the
exterior of $B({g^{-1}})$, which contains $H$, as claimed.

Suppose $I(g)$ is visible.  Then there exists an open set $U \subset
\HH^3$, with $U \cap I(g)$ not empty, so that for every $x$ in $I(g)
\cap U$, and for every $h \in \Gamma \setminus \Gamma_\infty$, $d(x,
h^{-1}(H)) \geq d(x, H)= d(x, g^{-1}(H))$.

Now consider the action of $g$ on this picture.  The set $g(U)$ is
open in $\HH^3$, and for all $y \in g(U) \cap I(g^{-1})$, we have $y
= g(x)$, for some $x \in U \cap I(g)$, so the distance $d(y, H) =
d(g(x), g g^{-1}(H)) \leq d(g(x), g h^{-1}(H)) = d(y, g h^{-1}(H))$,
for all $h \in \Gamma \setminus \Gamma_\infty$.  So $I({g^{-1}})$ is
visible.

To finish, apply the same proof to $g^{-1}$.
\end{proof}

\begin{lemma}
Gluing isometric spheres corresponding to $\rho(\gamma)$ and
$\rho(\gamma^{-1})$ of Example \ref{ex:simple-ford} gives a manifold
homeomorphic to the interior of the $(1;2)$--compression body $C$.
\label{lemma:simple-ford-uniform}
\end{lemma}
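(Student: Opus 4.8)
The plan is to construct the quotient manifold directly from the Poincaré polyhedron theorem, or, if we want to avoid the technical hypotheses of that theorem at this stage, to exhibit an explicit identification of the glued object with a regular neighborhood of $\partial_- C \cup \tau$. I would take the polyhedral route. Let $\Gamma = \rho(\pi_1(C))$ and let $P$ be the Ford domain described in Example \ref{ex:simple-ford}: the region in a vertical fundamental domain for $\Gamma_\infty$ lying outside all the half-balls $B(\rho(\gamma))$, $B(\rho(\gamma^{-1}))$, and their $\Gamma_\infty$--translates. By Lemma \ref{lemma:iso-center-rad} these isometric spheres have centers $0$ and $c$ and radius $1$; the hypotheses $|c| > 2$, $|a| > 2|c|$, $|b| > 2|c|$ guarantee that $I(\rho(\gamma))$ and $I(\rho(\gamma^{-1}))$, together with all $\Gamma_\infty$--translates, are pairwise disjoint, so $P$ has exactly two non-vertical faces (up to the $\Gamma_\infty$--action) together with the four vertical faces coming from the chosen fundamental domain for $\Gamma_\infty$.

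Next I would identify the face-pairing transformations. By Lemma \ref{lemma:g-ginv}, $\rho(\gamma)$ carries the face in $I(\rho(\gamma))$ isometrically to the face in $I(\rho(\gamma^{-1}))$, sending $B(\rho(\gamma))$ to the exterior of $B(\rho(\gamma^{-1}))$; this is the pairing of the two non-vertical faces. The vertical faces are paired in the standard way by $\rho(\alpha)$ and $\rho(\beta)$, which are the parabolic translations $z \mapsto z + a$ and $z \mapsto z + b$. One checks the edge cycles: the vertical edges running up to infinity cycle under the parabolic subgroup $\Gamma_\infty = \langle \rho(\alpha), \rho(\beta)\rangle$ with the expected $\ZZ \times \ZZ$ relation, and because the two hemispheres are disjoint there are no edges formed by intersections of non-vertical faces, so the remaining edges lie entirely on vertical faces and their cycles also close up trivially. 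Hence the Poincaré polyhedron theorem applies and tells us that the group generated by the face pairings — which is exactly $\Gamma$, with presentation $\langle \alpha, \beta, \gamma \mid [\alpha,\beta]\rangle \cong (\ZZ\times\ZZ) * \ZZ$ — is discrete, $P$ is a fundamental domain, and $\HH^3/\Gamma$ is obtained from $P$ by the face identifications.

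Finally I would recognize the resulting manifold topologically. Truncate the cusp by a horoball $H_\infty$ at infinity as in the text; the truncated polyhedron $P \cap \{$below $H_\infty\}$ is a ball, and its boundary consists of the torus cross-section (from the horospherical face, with its two vertical-face pairings giving the torus $T^2 = \partial_- C$), the two hemispherical faces which are glued to each other by $\rho(\gamma)$ forming a $1$--handle, and the rest. Gluing a $1$--handle onto $T^2 \times I$ via $\rho(\gamma)$ produces precisely the $(1;2)$--compression body, and the core of that $1$--handle together with vertical arcs is the core tunnel $\tau$; equivalently, $\HH^3/\Gamma$ deformation retracts onto the image of $\partial_- C \cup \tau$, which characterizes $C$. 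I would phrase this as: $\HH^3/\Gamma$ is homeomorphic to the interior of a regular neighborhood of $\partial_- C \cup \tau$, i.e.\ to the interior of $C$.

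The main obstacle is the careful verification of the hypotheses of the Poincaré polyhedron theorem — specifically checking that the edge cycles (especially those involving the interaction of the vertical faces with the tops of the hemispheres, where a hemispherical face meets a vertical face) have trivial holonomy and satisfy the dihedral-angle-sum condition, and that the tessellation is locally finite. The disjointness of the hemispheres, forced by the inequalities on $|a|, |b|, |c|$, is what makes this manageable: it prevents any edge from arising as an intersection of two non-vertical faces, so all nontrivial cycle relations are accounted for by the single commutator relation in $\Gamma_\infty$. I would also need to confirm that the subgroup fixing infinity is exactly $\Gamma_\infty$ (no rank-one parabolics and no extra parabolics), which follows from the same disjointness together with Lemma \ref{lemma:visible-strict-inequality}, and is in any case part of what Lemma \ref{lemma:simple-ford} (referenced for the next step) will establish.
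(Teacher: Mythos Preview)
Your argument is correct, but it does substantially more than the lemma asks and more than the paper does. The paper's proof is a two--line topological identification: first glue the vertical sides by the parabolics $\rho(\alpha),\rho(\beta)$ to obtain $T^2\times(0,1)$, then glue $I(\rho(\gamma))$ to $I(\rho(\gamma^{-1}))$ by $\rho(\gamma)$, which is topologically the attachment of a single $1$--handle; the result is by definition the interior of the $(1;2)$--compression body. No appeal to Poincar\'e is made or needed, because the lemma only asserts a homeomorphism type, not that $P$ is a fundamental domain or that $\Gamma$ is discrete.

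Your final paragraph is exactly this argument, so the core of your proof matches the paper's. The Poincar\'e--theorem scaffolding you build first (edge cycles, dihedral angles, local finiteness, the worry about hemispheres meeting vertical walls) is the content of the later Lemma~\ref{lemma:simple-ford}, not of this one; in the paper these concerns are separated precisely so that the present lemma stays elementary. Incidentally, your worry about edges where a hemisphere meets a vertical face evaporates once you choose the vertical fundamental domain large enough to contain both hemispheres in its interior, which the hypotheses $|a|,|b|>2|c|$ permit---this is how the paper handles it in Lemma~\ref{lemma:simple-ford}.
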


\begin{proof}
In the example, first glue sides of the vertical fundamental domain
via the parabolic transformations fixing infinity.  The result is
homeomorphic to the cross product of a torus and an open interval
$(0,1)$.  Next glue the face $I(\rho(\gamma))$ to
$I(\rho(\gamma^{-1}))$ via $\gamma$.  The result is
topologically equivalent to attaching a 1--handle, yielding a manifold
homeomorphic to $C$.
\end{proof}

\begin{lemma}
Let $\Gamma$ be a discrete, torsion free subgroup of $\PSL(2,\CC)$
with a rank two parabolic subgroup $\Gamma_\infty$ fixing the point at
infinity.  Suppose $g, h \in \Gamma\setminus\Gamma_\infty$, with
$I(g)$ and $I(h)$ visible, and suppose $I(g) \cap I(h)$ is visible.
Then $I(gh^{-1})\cap I(h^{-1})$ is visible, and $h$ maps the visible
portion of $I(g) \cap I(h)$ isometrically to the visible portion of
$I(gh^{-1}) \cap I(h^{-1})$.  In addition, there must be some visible
isometric sphere $I(k)$, not equal to $I(h^{-1})$, such that $I(k)
\cap I(h^{-1}) = I(gh^{-1}) \cap I(h^{-1})$.
\label{lemma:edge-visible}
\end{lemma}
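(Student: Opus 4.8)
The plan is to transport the visible cell $I(g)\cap I(h)$ by the isometry $h$ and identify its image, using Lemma~\ref{lemma:g-ginv} and the characterization of visibility in Lemma~\ref{lemma:alt-visible}. First I would fix a horosphere $H$ about infinity bounding an embedded horoball neighborhood of the cusp, so that all the distance inequalities of Lemma~\ref{lemma:alt-visible} are available. Since $I(g)\cap I(h)$ is visible, there is an open set $U$ meeting $I(g)\cap I(h)$ in a cell of $\F$ of the correct dimension, with $d(x,k^{-1}H)\geq d(x,H)=d(x,g^{-1}H)=d(x,h^{-1}H)$ for every $x\in U\cap I(g)\cap I(h)$ and every $k\in\Gamma\setminus\Gamma_\infty$. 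Now apply $h$: for $y=h(x)\in h(U)\cap h(I(g)\cap I(h))$, the distances transform as $d(y,H)=d(h(x),h h^{-1}H)=d(x,H)$ and $d(y,(kh^{-1})^{-1}H)=d(h(x),h k^{-1}H)=d(x,k^{-1}H)$. Taking $k=g$ gives $d(y,(gh^{-1})^{-1}H)=d(x,g^{-1}H)=d(x,H)=d(y,H)$, so $y\in I(gh^{-1})$; taking $k=h$ and using $h h^{-1}=\mathrm{id}$ shows $y\in I(h^{-1})$ (this is just the statement that $h$ carries $I(h)$ to $I(h^{-1})$, as in Lemma~\ref{lemma:g-ginv}). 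Since $h$ permutes $\Gamma\setminus\Gamma_\infty$ by $k\mapsto kh^{-1}$, the inequality $d(y,(kh^{-1})^{-1}H)\geq d(y,H)$ holds for all such elements, so by Lemma~\ref{lemma:alt-visible} the set $h(U)$ witnesses visibility of $I(gh^{-1})\cap I(h^{-1})$, and $h$ restricts to an isometry from the visible portion of $I(g)\cap I(h)$ onto the visible portion of $I(gh^{-1})\cap I(h^{-1})$. Dimension is preserved because $h$ is an isometry of $\HH^3$ and cells of $\F$ map to cells of $\F$ (as $\F$ is $\Gamma$-invariant in the sense that $h(\F)$ is the equivariant Ford domain built from the horosphere $h(H)$, which has the same cell structure translated).

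For the final assertion, I would argue as follows. We have just produced a cell $c'=h(c)$ of $\F$, of codimension two (an edge, if $I(g)\cap I(h)$ is an edge), lying in $I(gh^{-1})\cap I(h^{-1})$. A codimension-two cell of the polyhedron $\F$ lies on the boundary of at least two codimension-one faces, i.e.\ of at least two visible isometric spheres. One of these is $I(h^{-1})$, which is visible by Lemma~\ref{lemma:g-ginv} since $I(h)$ is. Let $I(k)$ be another visible isometric sphere containing $c'$. Then $I(k)\cap I(h^{-1})$ contains the codimension-two cell $c'$ and hence, being itself of codimension two (two distinct isometric spheres meet in a set of codimension two or less, and if it were codimension one the spheres would coincide, forcing $I(k)=I(h^{-1})$ by the remark following Lemma~\ref{lemma:visible-strict-inequality}, contrary to choice), must equal $I(gh^{-1})\cap I(h^{-1})$ on the relevant portion; more precisely $c'\subset I(k)\cap I(h^{-1})$ and $c'\subset I(gh^{-1})\cap I(h^{-1})$ are both full-dimensional in the respective intersections, so $I(k)\cap I(h^{-1})=I(gh^{-1})\cap I(h^{-1})$ as sets (two round hemispheres are determined by their intersection with a third once that intersection has the right dimension). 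This $I(k)$ is the required visible isometric sphere not equal to $I(h^{-1})$.

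I expect the main obstacle to be the bookkeeping in the last paragraph: showing that there genuinely is a \emph{second} visible isometric sphere through the transported cell, and that its intersection with $I(h^{-1})$ coincides with $I(gh^{-1})\cap I(h^{-1})$ rather than merely containing the cell $c'$. The first point should follow from the fact that $c'$ is a cell of the Ford domain of dimension one less than a face, so it lies in the closure of at least two faces of $\F$ (a convex polyhedron cannot have a codimension-two cell on the boundary of only one codimension-one face unless that cell is on $\partial\HH^3$ or on the boundary of the vertical fundamental domain, cases that do not arise here because $I(gh^{-1})$ and $I(h^{-1})$ are themselves two such faces, so $c'$ is on the boundary of both and the question is only whether $I(gh^{-1})$ is visible — but that is exactly what we just proved). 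The second point is the elementary geometric fact that two distinct isometric hemispheres orthogonal to $\CC$ meet in a single geodesic semicircle, so their intersection is determined by any full-dimensional subset of it; I would phrase this carefully but not belabor it. Everything else is a direct transcription of the argument pattern already used in Lemmas~\ref{lemma:g-ginv} and~\ref{lemma:alt-visible}.
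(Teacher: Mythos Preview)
Your argument is essentially the paper's: transport the distance inequalities of Lemma~\ref{lemma:alt-visible} by the isometry $h$, then use the cell structure of $\F$ to find a second 2--cell through the transported edge. Two small slips worth fixing: first, the map $k\mapsto kh^{-1}$ does not permute $\Gamma\setminus\Gamma_\infty$ (it sends $h$ to the identity), though the inequality you need still holds for all $k\in\Gamma$ since $k\in\Gamma_\infty$ gives $k^{-1}H=H$ and hence equality; second, in your parenthetical you claim to have proved $I(gh^{-1})$ is visible, but you only showed the \emph{edge} $I(gh^{-1})\cap I(h^{-1})$ is visible --- the paper explicitly remarks after the lemma that $I(gh^{-1})$ itself need not be visible (e.g.\ when the dual cell is a quadrilateral). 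This does not damage your main line, since once $c'$ is a 1--cell of $\F$ it is bordered by two 2--cells regardless, one in $I(h^{-1})$ and the other in some visible $I(k)$; just drop the parenthetical. Your extra justification that $I(k)\cap I(h^{-1})=I(gh^{-1})\cap I(h^{-1})$ as sets (two distinct hemispheres meet in a single geodesic, so any full-dimensional piece determines the intersection) is a point the paper glosses over.
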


Notice that in Lemma \ref{lemma:edge-visible}, $I(k)$ may be equal to
$I(gh^{-1})$, but is not necessarily so.  In fact, $I(gh^{-1})$ may
not be visible, such as in the case that there is a quadrilateral dual
to $I(g) \cap I(h)$.  We discuss dual faces later.

\begin{proof}
Let $H$ be a horosphere about infinity which bounds a horoball that
projects to an embedded neighborhood of the cusp of $M$.  Suppose
$I(g) \cap I(h)$ is visible.  By Lemma \ref{lemma:alt-visible}, there
exists an open set $U \subset \HH^3$ such that for all $x \in U \cap
(I(g) \cap I(h))$, and all $k \in \Gamma \setminus \Gamma_\infty$, the
hyperbolic distance $d(x,H)$ is less than or equal to the hyperbolic
distance $d(x, k^{-1}(H))$.  Since $x \in I(g) \cap I(h)$, we also
have $d(x, g^{-1}H) = d(x, h^{-1}H) = d(x, H)$.

Apply $h$ to this picture.  We obtain:
$$d(h(x), hg^{-1}H) = d(h(x), H) =d(h(x), h H) \leq d(h(x),
hk^{-1}H)$$ for all $k \in \Gamma \setminus \Gamma_\infty$.  Thus for
all $y$ in the intersection of the open set $h(U)$ and $I(gh^{-1})
\cap I(h^{-1})$, $y = h(x)$ satisfies the inequality of Lemma
\ref{lemma:alt-visible}, and so $I(gh^{-1}) \cap I(h^{-1})$ is
visible.  Since this works for any such open set $U$, and the 1--cell
of $\F$ contained in $I(g) \cap I(h)$ may be covered with such open
sets, $h$ maps visible portions isometrically.

Finally, since $I(gh^{-1}) \cap I(h^{-1})$ is visible, it contains a
1--dimensional cell of $\F$.  There must be two 2--dimensional cells
of $\F$ bordering $I(gh^{-1}) \cap I(h^{-1})$.  One of these is
contained in $I(h^{-1})$, using the fact that $I(h)$ is visible and
Lemma \ref{lemma:g-ginv}.  The other must be contained in some $I(k)$
(possibly, but not necessarily $I({gh^{-1}})$), and so this $I(k)$ is
visible.
\end{proof}

The first part of Lemma \ref{lemma:edge-visible} is a portion of what
Akiyoshi, Sakuma, Wada, and Yamashita call the \emph{chain rule for
  isometric circles} \cite[Lemma 4.1.2]{aswy:book}.

Additionally, we present a result that allows us to identify
geometrically finite uniformizations that are minimally parabolic.

\begin{lemma}
Suppose $\rho\co \pi_1(C) \to {\rm PSL}(2,\CC)$ is a geometrically
finite uniformization.  Suppose none of the visible isometric spheres
of the Ford domain of $\HH^3/\rho(\pi_1(C))$ are visibly tangent on
their boundaries.  Then $\rho(\pi_1(C))$ is minimally parabolic.
\label{lemma:min-parabolic}
\end{lemma}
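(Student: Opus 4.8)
The plan is to prove the contrapositive: if $\rho(\pi_1(C))$ is \emph{not} minimally parabolic, then some pair of visible isometric spheres of the Ford domain must be visibly tangent on their boundaries. So suppose $\Gamma = \rho(\pi_1(C))$ is geometrically finite but not minimally parabolic. By Definition \ref{def:min-parabolic}, $\Gamma$ has a rank one parabolic subgroup; equivalently, there is a parabolic element $p \in \Gamma$ that is not conjugate into $\Gamma_\infty$. Since $\Gamma$ is geometrically finite, $p$ determines a rank one cusp of $M = \HH^3/\Gamma$, whose fixed point $\xi \in \partial \HH^3 = \CC \cup \{\infty\}$ is a parabolic fixed point not in the $\Gamma$-orbit of $\infty$ (if it were, $p$ would be conjugate into $\Gamma_\infty$).

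First I would normalize so that the horosphere $H$ about infinity bounds an embedded horoball neighborhood of the rank two cusp, so that $\F$ is the Ford domain in the sense of Definition \ref{def:F}, and I would locate the point $\xi \in \CC$. The key geometric fact is that a rank one parabolic fixed point $\xi$ (not equivalent to $\infty$) must lie on the boundary circle $\partial \F \cap \CC$ of the equivariant Ford domain — more precisely, $\xi$ is a point of $\overline{\F} \cap \CC$ that is approached by the boundary of $\F$ in a ``cusp-like'' fashion. The reason is standard: for a geometrically finite group, the domain $\F$ meets $\CC$ exactly in the closure of a fundamental domain for the action of $\Gamma_\infty$ on the set of points of discontinuity, minus neighborhoods of all parabolic fixed points inequivalent to $\infty$; near each such $\xi$, the isometric spheres whose boundary circles limit on $\xi$ must form an infinite chain of mutually tangent circles, tangent at $\xi$ and invariant (up to finite index) under the rank one stabilizer of $\xi$. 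Concretely, the stabilizer $\mathrm{Stab}_\Gamma(\xi) \cong \ZZ$ acts on the isometric spheres with boundary near $\xi$, and because this stabilizer fixes $\xi$, the boundary circles of consecutive isometric spheres in this chain can only meet at $\xi$ itself — they are tangent there. These isometric spheres are visible (they contribute faces of $\F$ adjacent to the cusp at $\xi$, since a finite-sided convex fundamental domain of a geometrically finite group with a rank one cusp has faces accumulating only at the rank one cusp points, and near $\xi$ the faces of $\F$ are precisely these isometric sphere faces). Hence there exist visible isometric spheres tangent on their boundaries at $\xi$.

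The key steps in order: (1) Assume $\Gamma$ geometrically finite, not minimally parabolic; extract a rank one parabolic fixed point $\xi \in \CC$, inequivalent to $\infty$ under $\Gamma$. (2) Show $\xi \in \partial\F \cap \CC$ and analyze the local structure of $\F$ near $\xi$: the faces of $\F$ meeting a small half-ball neighborhood of $\xi$ are all isometric sphere faces (no vertical faces, since $\xi$ is inequivalent to $\infty$), and there are infinitely many of them, permuted by $\mathrm{Stab}_\Gamma(\xi) \cong \ZZ$. (3) Conclude that consecutive ones in this chain, say $I(g_n)$ and $I(g_{n+1})$, are visible and their boundary circles in $\CC$ are tangent at $\xi$ — this is the ``visibly tangent on their boundaries'' condition. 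The main obstacle is step (2): making precise the claim that near a rank one cusp point the Ford domain looks like a ``vertical prism over an ideal vertex'' bounded by infinitely many mutually tangent isometric spheres, and ruling out that $\xi$ could be hidden behind some half-ball $B(h)$ (it cannot, because $\xi$ is a parabolic fixed point of $\Gamma$ and hence lies in the limit set on the boundary of every half-ball containing part of the relevant region — more carefully, $\xi$ is a conical-limit-point-free parabolic fixed point, so it lies in the closure of $\F$ but in no open $B(h)$). I would handle this by invoking the standard description of Ford/Dirichlet domains of geometrically finite groups near parabolic fixed points (as in Bowditch \cite{bowditch} or \cite{aswy:book}), where one shows $\mathrm{Stab}_\Gamma(\xi)$ acts cocompactly on $\partial\F$ minus a neighborhood of $\xi$, forcing the tangency of consecutive faces at $\xi$.

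I should note one subtlety worth addressing: ``visibly tangent on their boundaries'' should be interpreted as the boundary circles $\partial I(g) \cap \CC$ and $\partial I(h) \cap \CC$ being tangent at a point that lies on the boundary of $\F$ (so the tangency is actually realized in the Ford domain, not screened off by other spheres). The chain of isometric spheres limiting on $\xi$ has exactly this property by construction, since each is visible and they fit together edge-to-edge around the cusp at $\xi$ with consecutive boundary circles osculating at $\xi \in \partial\F$. This completes the contrapositive and hence the lemma.
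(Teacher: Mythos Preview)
Your contrapositive strategy is the same as the paper's, and your conclusion is correct, but your description of the local structure of $\F$ near the rank~1 parabolic fixed point $\xi$ contains real errors that the paper's argument avoids. First, $\mathrm{Stab}_\Gamma(\xi)=\langle p\rangle$ does \emph{not} act on $\F$: the equivariant Ford domain is only $\Gamma_\infty$--invariant, and $p\notin\Gamma_\infty$, so your claim that the faces near $\xi$ are ``permuted by $\mathrm{Stab}_\Gamma(\xi)$'' is false. Second, there is no ``infinite chain of mutually tangent circles'' at $\xi$. The Ford domain is finite--sided (Corollary~\ref{cor:bowditch}), so only finitely many faces can meet any neighbourhood of $\xi$; in fact exactly \emph{two} visible isometric spheres are tangent there, not an infinite family. (Your parenthetical that faces ``accumulate only at the rank one cusp points'' directly contradicts finite--sidedness.) These inaccuracies mean that step~(2) of your outline, as written, does not establish the tangency you need.

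The paper's proof reaches the same conclusion but makes the local picture transparent by conjugating $\xi$ to $\infty$. After conjugation the (image of the) Ford domain still has finitely many faces; near the new $\infty$ it is bounded only by vertical half--planes, and its intersection with a small horosphere is a fundamental domain for the cusp group $\ZZ$ acting on that horosphere --- hence an infinite Euclidean strip bounded by two parallel sides. Those two sides are the images of two faces tangent at $\infty$; conjugating back, they are two visible isometric spheres visibly tangent at $\xi$. This bypasses any need to describe how $\langle p\rangle$ interacts with $\F$ and immediately gives the pair of tangent visible faces you are after. Your argument can be repaired along exactly these lines: replace the ``infinite chain'' picture with the strip picture obtained after sending $\xi$ to $\infty$.
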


By \emph{visibly} tangent, we mean the following.  Set $\Gamma =
\rho(\pi_1(C))$, and assume a neighborhood of infinity in $\HH^3$
projects to the rank two cusp of $\HH^3/\Gamma$, with $\Gamma_\infty <
\Gamma$ fixing infinity in $\HH^3$.  For any $g \in \Gamma \setminus
\Gamma_\infty$, the isometric sphere $I(g)$ has boundary that is a
circle on the boundary $\CC$ at infinity of $\HH^3$.  This circle
bounds an open disk $D(g)$ in $\CC$.  Two isometric spheres $I(g)$ and
$I(h)$ are \emph{visibly tangent} if their corresponding disks $D(g)$
and $D(h)$ are tangent on $\CC$, and for any other $k \in \Gamma
\setminus \Gamma_\infty$, the point of tangency is not contained in
the open disk $D(k)$.

\begin{proof}
Suppose $\rho(\pi_1(C))$ is not minimally parabolic.  Then it must
have a rank 1 cusp.  Apply an isometry to $\HH^3$ so that the point at
infinity projects to this rank 1 cusp.  The Ford domain becomes a
region $P$ meeting this cusp, with finitely many faces.  Take a
horosphere about infinity sufficiently small that the intersection of
the horosphere with $P$ gives a subset of Euclidean space with sides
identified by elements of $\rho(\pi_1(C))$, conjugated appropriately.

The side identifications of this subset of Euclidean space, given by
the side identifications of $P$, generate the fundamental group of the
cusp.  But this is a rank 1 cusp, hence its fundamental group is
$\ZZ$.  Therefore, the side identification is given by a single
Euclidean translation.  The Ford domain $P$ intersects this horosphere
in an infinite strip, and the side identification glues the strip into
an annulus.  Note this implies two faces of $P$ are tangent at
infinity.

Now apply an isometry, taking us back to our usual view of $\HH^3$,
with the point at infinity projecting to the rank 2 cusp of the
$(1;2)$--compression body $\HH^3/\rho(\pi_1(C))$.  The two faces of
$P$ tangent at infinity are taken to two isometric spheres of the Ford
domain, tangent at a visible point on the boundary at infinity.
\end{proof}

We will see that the converse to Lemma \ref{lemma:min-parabolic} is
not true.  There exist examples of geometrically finite
representations for which two visible isometric spheres are visibly
tangent, and yet the representation is still minimally parabolic.
Such an example is given, for example, in Example \ref{ex:bumping},
with $t=\sqrt{3}$.

\begin{remark}
In Example \ref{ex:simple-ford}, we claimed that the only visible
isometric spheres are those of $I(\rho(\gamma))$,
$I(\rho(\gamma^{-1}))$, and their translates under $\Gamma_\infty$.
Since none of these isometric spheres are visibly tangent, provided
the claim is true, Lemma \ref{lemma:min-parabolic} will imply that
this representation is minimally parabolic.
\end{remark}

\subsection{The {F}ord spine}

Let $\Gamma$ be discrete and geometrically finite.  When we glue the
Ford domain into the manifold $M=\HH^3/\Gamma$, the faces of the Ford
domain will be glued together in pairs to form $M$.

\begin{define}
The \emph{Ford spine} of $M$ is defined to be the image of the faces,
edges, and 0--cells of $\F$ under the covering $\HH^3\to M$.
\label{def:spine}
\end{define}

A spine usually refers to a subset of the manifold for which there is
a retraction of the manifold.  Using that definition, the Ford spine
is not strictly a spine.  However, the union of the Ford spine and the
non-toroidal boundary components will be a spine for a manifold $M$
with a single rank 2 cusp.  

To make that last sentence precise, recall that given a geometrically
finite uniformization $\rho$, the \emph{domain of discontinuity}
$\Omega$ is the complement of the limit set of $\rho(\pi_1(M))$ in the
boundary at infinity $\D_\infty \HH^3$.  See, for example, Marden
\cite[section 2.4]{marden-book}.

\begin{lemma}
Let $\rho$ be a minimally parabolic geometrically finite
uniformization of a 3--manifold $M$ with a single rank 2 cusp.  Then
the manifold $(\HH^3 \cup \Omega)/\rho(\pi_1(M))$ retracts onto the
union of the Ford spine and the boundary at infinity
$(\overline{\F} \cap \CC)/
\Gamma_\infty$.
\label{lemma:spine}
\end{lemma}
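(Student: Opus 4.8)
The plan is to build the retraction in two stages: first construct a deformation retraction of the whole quotient $(\HH^3 \cup \Omega)/\rho(\pi_1(M))$ onto the image of $\overline{\F}$ in that quotient, and then collapse $\overline{\F}$ itself onto the Ford spine together with its ideal part. For the first stage, I would work equivariantly upstairs in $\HH^3 \cup \Omega$. The set $\F$ was defined as $\HH^3$ minus the open half-balls $B(g)$; I want to radially push each such half-ball onto the hemisphere $I(g)$ that bounds it. Concretely, for a point $x$ lying strictly inside some $B(g)$, I would flow $x$ along the hyperbolic geodesic toward $g^{-1}(H)$ — equivalently, away from the nearest point of $I(g)$ measured appropriately — until it reaches the boundary of the union of all the half-balls, i.e.\ until it lands on $\F$. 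The key point making this well-defined and continuous is that, after choosing $H$ to bound an embedded horoball neighborhood of the rank $2$ cusp, the equivariant Ford domain $\F$ is locally finite (only finitely many isometric spheres meet any compact set, since $\Gamma$ is geometrically finite and by Corollary~\ref{cor:bowditch} the Ford domain has finitely many faces), so the function ``distance one must flow to exit $\bigcup B(g)$'' is continuous. This flow is $\Gamma_\infty$-equivariant and commutes with the remaining side-pairings, so it descends to a deformation retraction on the quotient. One must also check what happens on $\Omega$: the disks $D(g)$ in $\CC$ are the traces of the $B(g)$, and $\Omega \cap \CC$ retracts onto $(\overline\F \cap \CC)$ by the analogous planar radial collapse, compatibly with the interior flow.

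For the second stage I would use the fact, recorded just before the lemma, that for a minimally parabolic geometrically finite uniformization the manifold $(\HH^3\cup\Omega)/\rho(\pi_1(M))$ is a compact manifold-with-boundary whose only noncompact end is the rank $2$ cusp, and $\overline{\F}$ (intersected with a vertical fundamental domain for $\Gamma_\infty$) is a finite-sided polyhedron realizing it. A finite-sided convex polyhedron deformation retracts onto the union of its ``outward'' faces — here, the faces lying on visible isometric spheres, their edges, and vertices, together with the part of the frontier at infinity — by pushing away from the cusp at infinity: flow each interior point of $\overline\F$ vertically upward (toward the point at infinity) and then outward along the polyhedron until it reaches a face contained in some $I(g)$ or reaches $\overline\F\cap\CC$. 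Because infinity projects to a cusp and the vertical faces are identified in pairs by $\Gamma_\infty$, this upward flow descends to the quotient; its image is exactly the Ford spine union $(\overline\F\cap\CC)/\Gamma_\infty$. Composing the two retractions, and noting minimal parabolicity is what guarantees there are no rank $1$ cusps creating extra noncompact ends to worry about, gives the claimed retraction.

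The main obstacle, I expect, is making the first-stage radial flow genuinely continuous and well-defined at points where several half-balls $B(g)$ overlap, and especially near the limit set and near points of tangency of isometric spheres. Local finiteness of $\F$ handles the interior, but one has to argue that the ``exit point'' varies continuously as $x$ crosses from one $B(g)$ into a region covered by several, and that the flow lines do not get trapped or double back; here I would use convexity of $\F$ (Ford domains are convex fundamental domains, \emph{cf.}~\cite[Proposition A.1.2]{aswy:book}) to phrase the flow as nearest-point projection onto the convex set $\F$ and invoke the standard fact that nearest-point projection onto a closed convex subset of $\HH^3$ is a well-defined continuous (indeed $1$-Lipschitz) retraction. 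That reformulation sidesteps the case analysis entirely: the nearest-point projection $\HH^3 \to \F$ is $\Gamma_\infty$-equivariant and is canonically a deformation retraction, and its extension to the ideal boundary sends $\Omega\cap\CC$ to $\overline\F\cap\CC$. The remaining care is just at the cusp, where one restricts to the horoball-complement region so that projections stay within a fundamental domain; standard Margulis-lemma thickness of the cusp, already invoked in Definition~\ref{def:F}, makes this routine.
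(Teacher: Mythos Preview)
Your Stage~1 is unnecessary, and this is the main conceptual slip. The equivariant Ford domain $\F$, intersected with a vertical fundamental domain for $\Gamma_\infty$, is already a fundamental domain for $\Gamma=\rho(\pi_1(M))$; hence the quotient map $\overline{\F}\cap(\HH^3\cup\Omega)\to(\HH^3\cup\Omega)/\Gamma$ is \emph{surjective}. The ``image of $\overline{\F}$ in the quotient'' is the whole manifold, so there is nothing to retract onto. Worse, the map you propose in Stage~1 does not descend: nearest-point projection $\HH^3\to\F$ is $\Gamma_\infty$-equivariant but not $\Gamma$-equivariant, since $\F$ is not $\Gamma$-invariant, and it does not commute with the loxodromic side-pairings $g\colon I(g)\to I(g^{-1})$. (Take $x\in\F$ and $g$ with $gx\notin\F$; the nearest point of $\F$ to $gx$ need not lie in the $\Gamma$-orbit of $x$.)

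The paper's proof is essentially your Stage~2 alone, with the direction corrected. For each $x\in\F$ one takes the vertical geodesic through $x$; its endpoint away from $\infty$ lies either on a visible isometric sphere or on $\overline{\F}\cap\CC$, and one retracts $x$ to that endpoint. This is $\Gamma_\infty$-equivariant and is the identity on the isometric-sphere faces of $\F$, so it is automatically compatible with all face-pairings and descends to the quotient. Your phrase ``pushing away from the cusp at infinity: flow each interior point of $\overline{\F}$ vertically upward (toward the point at infinity)'' is self-contradictory---upward is \emph{toward} the cusp and never reaches any face of $\F$. The flow should be vertically \emph{downward}.
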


\begin{proof}
Let $H$ be a horosphere about infinity in $\HH^3$ that bounds a
horoball which projects to an embedded horoball neighborhood of the
cusp of $\HH^3/\rho(\pi_1(M))$.  Let $x$ be any point in $\F \cap
\HH^3$.  The nearest point on $H$ to $x$ lies on a vertical line
running from $x$ to infinity.  These vertical lines give a foliation
of $\F$.  All such lines have one endpoint on infinity, and the other
endpoint on $\overline{\F} \cap \CC$ or an isometric sphere of $\F$.
We obtain our retraction by mapping the point $x$ to the endpoint of
its associated vertical line, then quotienting out by the action of
$\rho(\pi_1(M))$.
\end{proof}

To any face $F_0$ of the Ford spine, we obtain an associated
collection of visible elements of $\Gamma$: those whose isometric
sphere projects to $F_0$ (or more carefully, a subset of their
isometric sphere projects to the face $F_0$).

\begin{define}
  We will say that an element $g$ of $\Gamma$ \emph{corresponds} to a
  face $F_0$ of the Ford spine of $M$ if $I(g)$ is visible and (the
  visible subset of) $I(g)$ projects to $F_0$.  In this case, we also
  say $F_0$ corresponds to $g$.  Notice the correspondence is not
  unique: if $g$ corresponds to $F_0$, then so does $g^{-1}$ and $w_0
  g^{\pm 1} w_1$ for any words $w_0, w_1 \in \Gamma_\infty$.
  \label{def:correspond}
\end{define}

\begin{remark}
Consider again the unifomization of $C$ given in Example
\ref{ex:simple-ford}.  We will see that the Ford domain of this
example has faces coming from a vertical fundamental domain and the
two isometric spheres $I({\rho(\gamma)})$ and
$I({\rho(\gamma^{-1})})$.  Hence the Ford spine of this manifold
consists of a single face, corresponding to $\rho(\gamma)$.
\end{remark}

\subsection{Poincar{\'e} polyhedron theorem}

We need a tool to identify the Ford domain of a hyperbolic manifold.
This tool will be Lemma \ref{lemma:finding-ford}.  The proof of that
lemma uses the Poincar{\'e} polyhedron theorem, which we use
repeatedly in this paper.  Those results we use most frequently are
presented in this subsection.  Our primary reference is Epstein and
Petronio \cite{epstein-petronio}, which contains a version of the
Poincar{\'e} theorem that does not require finite polyhedra.

The setup for the following theorems is the same.  We begin with a
finite number of elements of $\PSL(2,\CC)$, $g_1, g_2, \dots, g_n$, as
well as a parabolic subgroup $\Gamma_\infty \cong \ZZ \times \ZZ$ of
$\PSL(2,\CC)$, fixing the point at infinity.  Let $P$ be a polyhedron
cut out by isometric spheres corresponding to $\{g_1, \dots, g_n\}$
and $\{g_1^{-1}, \dots, g_n^{-1}\}$, as well as either:
\begin{enumerate}
\item all isometric spheres given by translations of $g_i$ and
  $g_i^{-1}$ under $\Gamma_\infty$, or
\item a vertical fundamental domain for the action of
  $\Gamma_\infty$. 
\end{enumerate}
An example of the former would be an equivariant Ford domain, $\F$.
An example of the latter would be a Ford domain.  Note that in both
cases, we allow $P$ to contain an open neighborhood of a point on the
boundary at infinity of $\HH^3$, so it will not necessarily have
finite volume.

Let $M$ be the object obtained from $P$ by gluing isometric spheres
corresponding to $g_j$ and $g_j^{-1}$ via the isometry $g_j$, for all
$j$, and then, if applicable, gluing faces of the vertical fundamental
domain by parabolic isometries in $\Gamma_\infty$.

\begin{theorem}[Poincar{\'e} polyhedron theorem, weaker version]
For $P$, $M$ as above, if $M$ is a smooth hyperbolic manifold, then
\begin{itemize}
\item the group $\Gamma$ generated by face pairings is discrete,
\item $\pi_1(M) \cong \Gamma$.
\end{itemize}
\label{thm:poincare2}
\end{theorem}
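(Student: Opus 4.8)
\emph{Proof proposal.} The plan is to obtain this as a consequence of the version of the Poincar\'e polyhedron theorem proved by Epstein and Petronio \cite{epstein-petronio}, which applies to polyhedra that need not be finite-sided or of finite volume. The work is to recast our $P$ together with its face pairings into their framework and check the hypotheses of their theorem; the hypothesis that $M$ is a smooth hyperbolic manifold will supply most of what needs checking. Note that we only want the two stated conclusions (discreteness of $\Gamma$ and $\pi_1(M)\cong\Gamma$), not the stronger assertion that $P$ is a fundamental domain, so it suffices to produce a covering map $\HH^3\to M$ with deck group $\Gamma$.

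First I would record that the face-pairing isometries really do pair the faces of $P$. The faces of $P$ are of two kinds: pieces of isometric spheres $I(g_j)$, $I(g_j^{-1})$, and, in case (2), pieces of vertical geodesic planes bounding the vertical fundamental domain. For the first kind, Lemma \ref{lemma:g-ginv} shows that $g_j$ carries $I(g_j)$ isometrically onto $I(g_j^{-1})$ and sends the open half ball $B(g_j)$ to the exterior of $B(g_j^{-1})$; hence $g_j$ maps $P$ onto a region lying on the opposite side of $I(g_j^{-1})$ from $P$, so the face of $P$ in $I(g_j)$ is paired with the face in $I(g_j^{-1})$ by $g_j$. For the second kind, the generators of $\Gamma_\infty$ pair opposite vertical faces of the fundamental domain. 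In both cases the pairing of a face by $g$ is inverse to the pairing of its partner face by $g^{-1}$, so we obtain a genuine set of side-pairing transformations in the sense of \cite{epstein-petronio}, and the quotient of $P$ by the equivalence relation they generate is by construction $M$.

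Next I would verify the ridge (edge) and vertex cycle conditions. Given an edge $e$ of $P$, the side pairings carry $e$ through a cycle of edges of $P$ and its images, and for the quotient to be locally modelled on $\HH^3$ along the image of $e$ the dihedral angles encountered around the cycle must sum to $2\pi$; similarly the solid angles at any $0$--cell interior to $\HH^3$ must sum to $4\pi$. These cycle conditions are exactly what ``$M$ is a smooth hyperbolic manifold'' guarantees: a neighbourhood in $M$ of a point on the image of an edge is obtained by gluing finitely many wedges of $\HH^3$ cyclically, and such a gluing is isometric to a ball in $\HH^3$ if and only if the wedge angles add up correctly, and likewise at interior vertices. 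The same hypothesis rules out the side pairings identifying interior points of $P$ or folding a face onto itself, since the quotient map must be injective near the interior of each face for $M$ to be a manifold.

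Finally, I would feed these verified hypotheses into the theorem of \cite{epstein-petronio}. Their argument builds a developing object by tiling with copies of $P$ indexed by $\Gamma$ and produces a local isometry from it to $\HH^3$; the cycle conditions make this a local homeomorphism, and their analysis upgrades it to a covering map $\HH^3 \to M$ with deck group $\Gamma$. The point that requires the non-finite-volume version of the theorem is local finiteness of the tiling $\{\gamma P : \gamma \in \Gamma\}$ near the cusp: in case (2) the $\Gamma_\infty$--translates of $P$ tile a cusp neighbourhood and accumulate only at the point at infinity, which is not a point of $\HH^3$, so the tiling is locally finite in $\HH^3$; this is precisely the situation that \cite{epstein-petronio} is designed to handle. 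Once $\HH^3 \to M$ is known to be a covering map with deck group $\Gamma$ acting freely and properly discontinuously, $\Gamma$ is discrete in $\PSL(2,\CC)$, and since $\HH^3$ is simply connected it is the universal cover of $M$, whence $\pi_1(M)\cong\Gamma$. I expect the genuinely delicate step to be the last one --- showing the developing map is a covering (equivalently, surjective) rather than merely a local homeomorphism --- since in this non-compact setting that is where the Epstein--Petronio machinery does its real work; the earlier steps are essentially bookkeeping once one grants that $M$ is a manifold.
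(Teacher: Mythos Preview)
Your approach follows the same route as the paper --- reducing to Epstein--Petronio's version of the Poincar\'e theorem --- and your verification of the \emph{Pairing} and \emph{Cyclic} conditions is fine. But there is a genuine gap: you never verify completeness of $M$ (equivalently of its universal cover $\widetilde M$), and this is precisely the hypothesis needed in \cite{epstein-petronio} to upgrade the developing map $\widetilde M \to \HH^3$ from a local isometry to a covering map. You correctly flag this upgrade as the delicate step, but you attribute the difficulty to local finiteness of the tiling $\{\gamma P\}$ and then simply assert that ``their analysis upgrades it to a covering map''. The paper closes this gap explicitly, via \cite[Theorem 3.4.23]{thurston:book}: since $P$ is complete, $M$ is complete if and only if the link of its ideal vertex inherits a Euclidean structure from horospherical cross-sections; in case (1) there is nothing to check, while in case (2) the holonomy of the link is the rank-two parabolic group $\Gamma_\infty$ acting by Euclidean isometries on a horosphere, so the link is Euclidean and $M$ is complete. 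Without this, the developing map could a priori be a local isometry onto a proper open subset of $\HH^3$, and neither conclusion would follow.

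Two smaller points. First, the \emph{Locally finite} condition in \cite{epstein-petronio} concerns the cell structure of $P$ itself (each face meets only finitely many others), not local finiteness of the $\Gamma$-tiling of $\HH^3$. In case (2), $P$ has finitely many faces and the condition is trivial; it is case (1), where $P$ has infinitely many faces (all $\Gamma_\infty$-translates of finitely many isometric spheres), that needs the observation that translation by a nontrivial element of $\Gamma_\infty$ moves each sphere a fixed positive distance, so any given sphere meets only finitely many others. Second, your appeal to Lemma~\ref{lemma:g-ginv} for the face-pairing is slightly circular, since that lemma is stated for discrete $\Gamma$; the paper instead deduces \emph{Pairing} directly from the hypothesis that $M$ is a smooth hyperbolic manifold (though of course the relevant computation in that lemma does not actually use discreteness).
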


\begin{proof}
The result will follow essentially from \cite[Theorem
  5.5]{epstein-petronio}.  First we check the conditions of this
theorem.  Since $M$ is a smooth hyperbolic manifold, the condition
\emph{Pairing}, requiring faces to meet isometrically, will hold.
Similarly, the condition \emph{Cyclic} must hold, requiring the
monodromy around an edge in the identification to be the identity, and
sums of dihedral angles to be $2\pi$.  Condition \emph{Connected} is
automatically true for $P$ a single polyhedron (rather than a
collection of polyhedra).  Finally, note that since we have a finite
number of original isometric spheres corresponding to $g_1, \dots,
g_n$ and their inverses, and translation by an element in
$\Gamma_\infty$ moves an isometric sphere a fixed positive distance,
any isometric sphere of $P$ can meet only finitely many other
isometric spheres.  This is sufficient to imply condition
\emph{Locally finite}.  

We need to show the universal cover $\widetilde{M}$ of $M$ is
complete.  Since $M$ is a smooth hyperbolic manifold and $P$ is
complete, $M$ will be complete if and only if the link of its ideal
vertex inherits a Euclidean structure coming from horospherical cross
sections to $P$, by \cite[Theorem 3.4.23]{thurston:book}.  In the case
that $P$ is cut out only by isometric spheres and their translates
under $\Gamma_\infty$, there is nothing to show.  In the case that $P$
is cut out by a vertical fundamental domain, we know the holonomy of
the link of this vertex is given by the group $\Gamma_\infty$, which
is a rank 2 subgroup of $\PSL(2,\CC)$ fixing the point at infinity.
Thus it acts on a horosphere about infinity by Euclidean isometries,
and so $M$ is indeed complete.  It follows that $\widetilde{M}$ is
complete.

Thus all the conditions for \cite[Theorem 5.5]{epstein-petronio} hold,
and the developing map $\widetilde{M}\to \HH^3$ is a covering map,
with covering transformations generated by $\Gamma$.  It follows that
$\Gamma$ is discrete, and $\pi_1(M) \cong \Gamma$.
\end{proof}

\begin{theorem}[Poincar{\'e} polyhedron theorem]
For $P$, $M$ as above, and $\Gamma$ the group generated by face pairings,
suppose each face pairing maps a face of $P$ isometrically to another
face of $P$, and that for each edge $e$ of $M$, i.e.\ for each
equivalence class of intersections of isometric spheres under the
equivalence given by the gluing, the sum of dihedral angles about $e$
is $2\pi$, and the monodromy around the edge is the identity.  Then
\begin{itemize}
\item $M$ is a smooth hyperbolic manifold with $\pi_1(M) \cong \Gamma$, and
\item $\Gamma$ is discrete.
\end{itemize}
\label{thm:poincare1}
\end{theorem}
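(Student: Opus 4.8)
The plan is to deduce the statement from the weaker version, Theorem~\ref{thm:poincare2}, by showing that the edge hypotheses force $M$ to be a smooth hyperbolic manifold; equivalently, to verify directly the hypotheses of Epstein and Petronio's version of the Poincar\'e theorem \cite[Theorem 5.5]{epstein-petronio}, now obtaining the conditions \emph{Pairing} and \emph{Cyclic} from the hypotheses of this theorem rather than from a prior assumption that $M$ is a manifold. As in the proof of Theorem~\ref{thm:poincare2}, the conditions \emph{Connected} and \emph{Locally finite} require no new input: $P$ is a single polyhedron, and since there are only finitely many ``seed'' isometric spheres associated to $g_1,\dots,g_n$ and their inverses, while a nontrivial element of $\Gamma_\infty$ moves any isometric sphere a definite Euclidean distance, each isometric sphere of $P$ meets only finitely many others. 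In particular each edge of $P$ lies on only finitely many faces, so every edge cycle of $M$ is finite.

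The condition \emph{Pairing} comes from the hypothesis: each face pairing $g_j$ is by construction a hyperbolic isometry of $\HH^3$ carrying a face of $P$ onto another face of $P$, and likewise the side pairings of a vertical fundamental domain, when present, are the parabolic isometries in $\Gamma_\infty$. The condition \emph{Cyclic} is precisely the edge hypothesis we have assumed: for each equivalence class of intersections of isometric spheres under the gluing, the dihedral angles sum to $2\pi$ and the composition of face pairings around the cycle --- the monodromy --- is the identity. Geometrically this says that the finitely many wedge-shaped pieces of $P$ meeting along such an edge cycle can be developed one after another into $\HH^3$, closing up exactly once around a geodesic and tiling a neighborhood of that geodesic; an edge point is the only place where the local model of $M$ could fail to be a ball in $\HH^3$, and this condition rules the failure out. (Interior points and points interior to faces are covered automatically, the latter because \emph{Pairing} gives an isometric identification of the two local half-balls.)

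For completeness of the universal cover we repeat the argument of Theorem~\ref{thm:poincare2}: the only ideal point of $P$ is the point at infinity, and if $P$ is cut out by a vertical fundamental domain then the holonomy of the link of the resulting ideal vertex is $\Gamma_\infty$, which acts on a horosphere about infinity by Euclidean isometries, so the link inherits a Euclidean structure and $M$ is complete by \cite[Theorem 3.4.23]{thurston:book}; if instead $P$ is cut out by isometric spheres together with all of their $\Gamma_\infty$-translates, there is nothing to check. Hence $\widetilde{M}$ is complete, and \cite[Theorem 5.5]{epstein-petronio} applies: the developing map $\widetilde{M} \to \HH^3$ is a covering map with deck transformations generated by $\Gamma$. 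A covering of the simply connected space $\HH^3$ by a connected space is a homeomorphism, and here it is a local isometry, hence a global isometry; therefore $\widetilde{M} = \HH^3$, so $M = \HH^3/\Gamma$ is a smooth complete hyperbolic manifold with $\pi_1(M) \cong \Gamma$ and $\Gamma$ discrete. (Equivalently, once we know $M$ is a smooth hyperbolic manifold, the last two bullet points are exactly Theorem~\ref{thm:poincare2}.)

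The main obstacle is the bookkeeping around edge cycles in this non-compact, possibly infinite-volume setting: one must check that ``the monodromy around the edge is the identity and the dihedral angles sum to $2\pi$'' as stated here is exactly the condition \emph{Cyclic} demanded in \cite{epstein-petronio}, and that local finiteness near the ideal vertex is genuinely guaranteed by the $\Gamma_\infty$-equivariance, so that both the edge cycles are finite and the Euclidean-link argument for completeness is legitimate. Once these points are in place, the conclusion is formal.
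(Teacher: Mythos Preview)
Your proposal is correct and follows essentially the same route as the paper: verify the Epstein--Petronio conditions \emph{Pairing}, \emph{Cyclic}, \emph{Connected}, and \emph{Locally finite} directly from the hypotheses, establish completeness of $\widetilde{M}$, and then invoke \cite[Theorem 5.5]{epstein-petronio}. The only cosmetic difference is that the paper obtains completeness via the ``consistent horosphere'' criterion \cite[Theorem 6.3]{epstein-petronio} rather than by repeating the Thurston-link argument from Theorem~\ref{thm:poincare2}; both routes are valid here.
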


\begin{proof}
Again this follows from various results in \cite{epstein-petronio}.
Because faces of $P$ are mapped isometrically, we have the condition
\emph{Pairing}. The fact that dihedral angles sum to $2\pi$ and the
monodromy is the identity implies condition \emph{Cyclic}.  Again
because isometric spheres can meet only finitely many others in $P$,
we have condition \emph{Locally finite}, and because we have a single
polyhedron, we have condition \emph{Connected}.  When we send $P$ to
$\HH^3$ via the developing map, we may find a horosphere about
infinity disjoint from the isometric spheres forming faces of $P$.  In
the case that $P$ is cut out by a vertical fundamental domain, since
$\Gamma_\infty$ preserves this horosphere and acts on it by Euclidean
transformations, in the terminology of Epstein and Petronio, the
universal cover of the boundary of $M$ has a consistent horosphere.
This is true automatically if $P$ is not cut out by a vertical
fundamental domain.  Then by \cite[Theorem 6.3]{epstein-petronio}, the
universal cover $\widetilde{M}$ of $M$ is complete.  Now
Poincar{\'e}'s Theorem \cite[Theorem 5.5]{epstein-petronio} implies
the developing map $\widetilde{M} \to \HH^3$ is a covering map, hence
$M \cong \HH^3/\Gamma$ is a smooth, complete hyperbolic manifold with
$\pi_1(M) \cong \Gamma$ a discrete group.
\end{proof}

Our first application of Poincar{\'e}'s theorem is the following lemma,
which helps us identify Ford domains.

\begin{lemma}
Let $\Gamma$ be a subgroup of ${\rm PSL}(2,\CC)$ with a rank 2
parabolic subgroup $\Gamma_\infty$ fixing the point at infinity.

Suppose the isometric spheres corresponding to a finite set of
elements of $\Gamma$, as well as their translates under
$\Gamma_\infty$, cut out a region $\mathcal{G}$ so that the quotient
under face pairings and the group $\Gamma_\infty$ yields a smooth
hyperbolic manifold with fundamental group $\Gamma$.  Then $\Gamma$ is
discrete and geometrically finite, and $\mathcal{G}$ must be the
equivariant Ford domain of $\HH^3/\Gamma$.

Similarly, suppose the isometric spheres corresponding to a finite set
of elements of $\Gamma$, as well as a vertical fundamental domain for
$\Gamma_\infty$, cut out a polyhedron $P$, so that face pairings given
by the isometries corresponding to isometric spheres and to elements
of $\Gamma_\infty$ yield a smooth hyperbolic manifold with fundamental
group $\Gamma$.  Then $\Gamma$ is discrete and geometrically finite,
and $P$ must be a Ford domain of $\HH^3/\Gamma$.
\label{lemma:finding-ford}
\end{lemma}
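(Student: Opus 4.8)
The plan is to reduce both statements to an application of the Poincaré polyhedron theorem (Theorem~\ref{thm:poincare2}), and then to upgrade the conclusion to the \emph{canonical} identification of $\mathcal{G}$ (resp.\ $P$) with the equivariant Ford domain (resp.\ a Ford domain). For the first statement: the region $\mathcal{G}$ is by hypothesis cut out by a finite collection of isometric spheres together with all their $\Gamma_\infty$--translates, so it is exactly a region $P$ of ``type (1)'' in the setup preceding Theorem~\ref{thm:poincare2}. The hypothesis that the quotient under face pairings and $\Gamma_\infty$ is a smooth hyperbolic manifold $M$ with $\pi_1(M)\cong\Gamma$ lets me invoke Theorem~\ref{thm:poincare2}: the group $\Gamma'$ generated by the face pairings is discrete and $\pi_1(M)\cong\Gamma'$. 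Since the face pairings are by construction elements of $\Gamma$, and since we are told $\pi_1(M)\cong\Gamma$, I should identify $\Gamma'$ with $\Gamma$ (this requires knowing the face pairings actually generate $\Gamma$, not merely a subgroup — I address this below). Discreteness of $\Gamma$ follows. Geometric finiteness then follows from Corollary~\ref{cor:bowditch} once I know $\mathcal{G}$ restricted to a vertical fundamental domain has finitely many faces, which it does since it is cut out by finitely many isometric spheres up to $\Gamma_\infty$.

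The substantive point is that $\mathcal{G}$ \emph{is} the equivariant Ford domain $\F$ of $\HH^3/\Gamma$, not merely some fundamental domain. Here I use that the Poincaré theorem gives that the developing map identifies the universal cover $\widetilde M$ with $\HH^3$ and the deck group with $\Gamma$, so $\mathcal{G}$ is a fundamental domain (mod $\Gamma_\infty$) for $\Gamma$ acting on $\HH^3$. Now $\mathcal{G}$ is cut out precisely by the isometric spheres $I(g)$ for $g$ ranging over a certain $\Gamma_\infty$--invariant finite-up-to-translation set $\mathcal{S}\subset\Gamma\setminus\Gamma_\infty$, i.e.\ $\mathcal{G} = \HH^3\setminus\bigcup_{g\in\mathcal{S}}B(g)$. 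I must show $\mathcal{G} = \F = \HH^3\setminus\bigcup_{g\in\Gamma\setminus\Gamma_\infty}B(g)$. One inclusion, $\F\subseteq\mathcal{G}$, is immediate since $\mathcal{S}\subseteq\Gamma\setminus\Gamma_\infty$. For the reverse, suppose some $g_0\in\Gamma\setminus\Gamma_\infty$ had $B(g_0)$ meeting the interior of $\mathcal{G}$; pick $x$ in that intersection. Because $\mathcal{G}$ is a fundamental domain, $x$ is equivalent under $\Gamma$ to a point of $\mathcal{G}$, and because the face pairings (which are among the $I(g)$, $g\in\mathcal{S}$) are isometries taking $H$--distances to $H$--distances in the controlled way of Lemma~\ref{lemma:g-ginv}, I can run a shortest-word argument: among all $\Gamma$--translates of $x$ lying in $\mathcal{G}$, one that is closest to $H$ cannot have any $B(h)$ through it with $h\in\Gamma\setminus\Gamma_\infty$, since applying the face pairing across that face would move it strictly closer to $H$, contradicting minimality — and this forces $x$ itself to satisfy $d(x,H)\le d(x,h^{-1}H)$ for all $h$, i.e.\ $x\in\F$, contradicting $x\in B(g_0)$. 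Hence $\mathcal{G}\subseteq\F$, so $\mathcal{G}=\F$.

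For the second statement, the argument is parallel: $P$ is a region of ``type (2)'' cut out by finitely many isometric spheres and a vertical fundamental domain for $\Gamma_\infty$, the face pairings are the isometries $g_j$ together with the parabolics of $\Gamma_\infty$, and the smoothness-plus-$\pi_1$ hypothesis again triggers Theorem~\ref{thm:poincare2}, giving discreteness of the group generated by the face pairings and an identification with $\Gamma$. Geometric finiteness follows from Corollary~\ref{cor:bowditch} since $P$ has finitely many faces. To see $P$ is a Ford domain, intersect $\F$ (which I have just characterized in the first part of the argument, now applied to the $\Gamma_\infty$--orbit of the finite set cutting out $P$) with the chosen vertical fundamental domain $V$: the isometric-sphere faces of $P$ are exactly those of $\F\cap V$ because they come from the same finite set of elements and their translates, and the vertical faces agree by construction. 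By Definition~\ref{def:ford-domain}, $P = \F\cap V$ is a Ford domain.

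The step I expect to be the main obstacle is establishing that the group generated by the face pairings is all of $\Gamma$ (rather than a proper subgroup) so that the $\Gamma$ appearing in the hypothesis and the $\Gamma'$ produced by Poincaré's theorem coincide; the hypothesis ``$\pi_1(M)\cong\Gamma$'' has to be read as an isomorphism realized by the face pairings, and I would make this precise by noting that the face-pairing elements lie in $\Gamma$ and that the resulting holonomy representation $\pi_1(M)\to\Gamma\subset\PSL(2,\CC)$ is, by Theorem~\ref{thm:poincare2}, an isomorphism onto its image $\Gamma'$; combined with the given $\pi_1(M)\cong\Gamma$ and the fact that $C$ (hence $M$) has Hopfian fundamental group $(\ZZ\times\ZZ)\ast\ZZ$, the inclusion $\Gamma'\hookrightarrow\Gamma$ of isomorphic groups is forced to be onto. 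The shortest-word/nearest-translate argument in the second paragraph is the other delicate point, since one must ensure that moving across a face strictly decreases $d(\cdot,H)$ — this is where Lemma~\ref{lemma:g-ginv} (the face pairing $g$ sends $B(g)$ to the exterior of $B(g^{-1})$, hence sends the ``$H$ is far'' side to the ``$H$ is near'' side) does the real work, and one handles the boundary case of points equidistant from several horoball translates by a general-position perturbation of $x$.
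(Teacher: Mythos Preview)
Your overall architecture matches the paper's: invoke Theorem~\ref{thm:poincare2} for discreteness, observe geometric finiteness, note the easy inclusion $\F\subseteq\mathcal{G}$, and then argue $\mathcal{G}\subseteq\F$ by contradiction. But your argument for the reverse inclusion has a genuine gap, and the paper's version is both simpler and correct.

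You restrict to ``all $\Gamma$--translates of $x$ lying in $\mathcal{G}$'' and take one closest to $H$. Since $\mathcal{G}$ (mod $\Gamma_\infty$) is already known to be a fundamental domain and $x$ lies in its interior, the only such translates are the $\Gamma_\infty$--translates of $x$, all at the same height; so the closest one is $x$ itself. You then claim that if $x\in B(h)$, ``applying the face pairing across that face'' gives a closer translate, contradicting minimality. But the problematic $h$ is exactly one \emph{not} in your face-pairing set $\mathcal{S}$ (if $h\in\mathcal{S}$ then $x\notin B(h)$ by definition of $\mathcal{G}$), so there is no face pairing to apply; and $hx$, while closer to $H$, need not lie in $\mathcal{G}$, so it does not violate the restricted minimality. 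The argument stalls here. The paper avoids this entirely: having observed $\F\subseteq\mathcal{G}$, it picks $x\in\mathrm{int}(\mathcal{G})\setminus\F$ and uses that $\F$ is \emph{also} a fundamental domain (mod $\Gamma_\infty$) to produce $y\in\F\subseteq\mathcal{G}$ with $y$ in the $\Gamma$--orbit of $x$ but $y\neq x$ (mod $\Gamma_\infty$); two inequivalent--mod--$\Gamma_\infty$ points of $\mathcal{G}$ in the same $\Gamma$--orbit contradicts $\mathcal{G}$ being a fundamental domain. No height argument or face-pairing bookkeeping is needed.

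A secondary issue: your Hopfian argument for $\Gamma'=\Gamma$ is incorrect. What you need is that an injective endomorphism of $\Gamma$ is surjective, which is the \emph{co-Hopfian} property, not the Hopfian one; and $(\ZZ\times\ZZ)\ast\ZZ$ is not co-Hopfian (send $\gamma\mapsto\gamma^2$). The paper simply reads the hypothesis ``fundamental group $\Gamma$'' as asserting that the face pairings generate $\Gamma$, which is the intended meaning and makes this point moot.
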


\begin{proof}
In both cases, Theorem \ref{thm:poincare2} immediately implies that
$\Gamma$ is discrete.  The fact that $\Gamma$ is geometrically finite
follows directly from the definition.

In the case of the polyhedron $P$, suppose $P$ is not a Ford domain.
Since the Ford domain is only well--defined up to choice of
fundamental region for $\Gamma_\infty$, there is a Ford domain $F$
with the same choice of vertical fundamental domain for
$\Gamma_\infty$ as for $P$.  Since $P$ is not a Ford domain, $F$ and
$P$ do not coincide.  Because both are cut out by isometric spheres
corresponding to elements of $\Gamma$, there must be a visible face
that cuts out the domain $F$ that does not agree with any of those
that cut out the domain $P$.  Hence $F$ is a strict subset of $P$, and
there is some point $x$ in $\HH^3$ which lies in the interior of $P$,
but does not lie in the Ford domain.

Now consider the covering map $\phi\co \HH^3 \to \HH^3/\Gamma$.  This
map $\phi$ glues both $P$ and $F$ into the manifold $\HH^3/\Gamma$,
since both are fundamental regions for the manifold.  Now consider
$\phi$ applied to $x$.  Because $x$ lies in the interior of $P$, and
$P$ is a fundamental domain, there is no other point of $P$ mapped to
$\phi(x)$.  On the other hand, $x$ does not lie in the Ford domain
$F$.  Thus there is some preimage $y$ of $\phi(x)$ under $\phi$ which
does lie in $F$.  But $F$ is a subset of $P$.  Hence we have $y \neq
x$ in $P$ such that $\phi(x) = \phi(y)$.  This contradiction finishes
the proof in the case of the polyhedron $P$.

The proof for $\mathcal{G}$ is nearly identical.  Again if
$\mathcal{G}$ is not the equivariant Ford domain $\F$, then there is
an additional visible face of $\F$ besides those that cut out
$\mathcal{G}$, and again there is some point $x$ in $\HH^3$ which lies
in the interior of $\mathcal{G}$, but does not lie in $\F$.  Again the
covering map $\phi\co\HH^3 \to \HH^3/\Gamma$ glues $\mathcal{G}$ and
$\F$ into the manifold $\HH^3/\Gamma$, and again since a point $x$
lies in $\mathcal{G}$ but not in $\F$, we have some $y \neq x$ in $\F$
such that $\phi(x) = \phi(y)$.  Again this is a contradiction.
\end{proof}

We may now complete the proof that the Ford domain of the
representation of Example \ref{ex:simple-ford} is as shown in Figure
\ref{fig:simple-ford}.

\begin{lemma}
Let $\rho\co \pi_1(C) \to \PSL(2,\CC)$ be the representation given in
Example \ref{ex:simple-ford}.  Then $\rho$ gives a minimally parabolic
geometrically finite uniformization of $C$, and a Ford domain is given
by the intersection of a vertical fundamental domain for
$\Gamma_\infty$ with the half--spaces exterior to the two isometric
spheres $I({\rho(\gamma)})$ and $I({\rho(\gamma^{-1})})$.
\label{lemma:simple-ford}
\end{lemma}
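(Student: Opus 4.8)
The plan is to apply Lemma~\ref{lemma:finding-ford}: it suffices to exhibit the polyhedron $P$ cut out by a vertical fundamental domain for $\Gamma_\infty$ together with the isometric spheres $I(\rho(\gamma))$, $I(\rho(\gamma^{-1}))$ and their $\Gamma_\infty$--translates, check that the face pairings (by $\rho(\gamma)$ and by the parabolics $\rho(\alpha),\rho(\beta)$) glue $P$ into a smooth hyperbolic manifold with fundamental group $\rho(\pi_1(C))$, and then invoke Poincar\'e's theorem in the form packaged as Lemma~\ref{lemma:finding-ford}. Geometric finiteness is then automatic from the definition, and minimal parabolicity will follow from Lemma~\ref{lemma:min-parabolic} once we observe that the only visible isometric spheres are $I(\rho(\gamma))$, $I(\rho(\gamma^{-1}))$ and their translates, and that none of these are visibly tangent. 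So the work is really: (i) describe $P$; (ii) verify the hypotheses of the Poincar\'e polyhedron theorem; (iii) identify $\pi_1$.

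First I would set up $P$ concretely. Since $|c|>2$, the discs $D(\rho(\gamma))$ and $D(\rho(\gamma^{-1}))$ (of radius $1$, centered at $0$ and $c$) are disjoint, and since $|a|>2|c|$ and $|b|>2|c|$ all $\Gamma_\infty$--translates of these two discs are pairwise disjoint as well; hence the corresponding isometric hemispheres are pairwise disjoint in $\HH^3$. Choose the vertical fundamental domain for $\Gamma_\infty = \langle \rho(\alpha),\rho(\beta)\rangle$ to be a fundamental parallelogram for the translation lattice generated by $a$ and $b$, chosen (by the size hypotheses) so that it contains the closed disc $\overline{D(\rho(\gamma))}$ and the closed disc $\overline{D(\rho(\gamma^{-1}))}$ in its interior, with all other lattice translates of these discs outside it. Then $P$ is the region above (exterior to) the two hemispheres $I(\rho(\gamma))$, $I(\rho(\gamma^{-1}))$ and inside the four vertical walls of the parallelogram. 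Its faces are: two hemispherical faces, paired by $\rho(\gamma)$ (which by Lemma~\ref{lemma:iso-center-rad} and Lemma~\ref{lemma:g-ginv} carries $I(\rho(\gamma))$ isometrically onto $I(\rho(\gamma^{-1}))$, sending $B(\rho(\gamma))$ to the exterior of $B(\rho(\gamma^{-1}))$); and four vertical faces, paired in two opposite pairs by $\rho(\alpha)$ and $\rho(\beta)$.

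Next I would verify the hypotheses of Theorem~\ref{thm:poincare1} (equivalently, feed $P$ into Lemma~\ref{lemma:finding-ford}). The face pairings are isometries of $P$ onto pieces of its boundary by construction, so \emph{Pairing} holds. The crucial point is \emph{Cyclic}: I need to understand the edge cycles. Because the two hemispheres are disjoint and sit in the interior of the parallelogram, every edge of $P$ lies in a vertical wall, and in fact every edge-equivalence class consists only of edges of the vertical walls — the hemispherical faces contribute no edges to $P$. Thus the edge cycles are exactly those of the quotient of the plane by the $\ZZ\times\ZZ$ translation lattice: there is (up to the identifications) a single edge class, around which the dihedral angles sum to $2\pi$ and the monodromy is a translation-conjugate that is the identity. (One should also note there is nothing to check at the ideal vertex beyond the consistent-horosphere condition, which holds since $\Gamma_\infty$ acts by Euclidean translations, exactly as in the proof of Theorem~\ref{thm:poincare1}.) Hence $M$, the quotient of $P$ under the face pairings, is a smooth complete hyperbolic manifold and $\Gamma := \langle \rho(\alpha),\rho(\beta),\rho(\gamma)\rangle$ is discrete with $\pi_1(M)\cong\Gamma$.

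Finally I would identify $M$ with the interior of $C$ and tie up the remaining claims. Gluing the four vertical faces of $P$ by $\rho(\alpha),\rho(\beta)$ produces $T^2\times(0,1)$, and then gluing the two hemispherical faces by $\rho(\gamma)$ attaches a $1$--handle; this is precisely the content of Lemma~\ref{lemma:simple-ford-uniform}, so $M$ is homeomorphic to the interior of the $(1;2)$--compression body $C$, and the abstract isomorphism $\pi_1(M)\cong\Gamma$ matches the standard presentation $(\ZZ\times\ZZ)\ast\ZZ$ with $\alpha,\beta\mapsto\rho(\alpha),\rho(\beta)$ and $\gamma\mapsto\rho(\gamma)$. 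By Lemma~\ref{lemma:finding-ford}, $\Gamma$ is discrete and geometrically finite and $P$ is a Ford domain; since the only isometric spheres bounding $P$ are $I(\rho(\gamma))$, $I(\rho(\gamma^{-1}))$ and their $\Gamma_\infty$--translates, these are the visible ones, and as their defining discs are pairwise disjoint (not merely tangent) no two are visibly tangent, so Lemma~\ref{lemma:min-parabolic} gives that $\rho$ is minimally parabolic. This yields the minimally parabolic geometrically finite uniformization and the stated Ford domain.

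I expect the main obstacle to be the bookkeeping in step (ii): confirming rigorously that the size hypotheses $|c|>2$, $|a|>2|c|$, $|b|>2|c|$ really do force the closed discs (equivalently the closed half-balls) of $\rho(\gamma)^{\pm1}$ and all their lattice translates to be disjoint and contained in a single fundamental parallelogram, so that no unexpected edges arise from the hemispheres and the only edge cycles are the trivial parabolic ones — once that combinatorial picture is pinned down, the Poincar\'e-theorem verification and the $\pi_1$ computation are routine given Lemmas~\ref{lemma:finding-ford} and~\ref{lemma:simple-ford-uniform}.
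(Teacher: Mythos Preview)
Your proposal is correct and follows essentially the same route as the paper: choose a vertical fundamental domain containing both hemispheres in its interior, observe that the resulting polyhedron has no edges arising from intersections of isometric spheres (the paper phrases this simply as ``$P$ has no edges''), apply Theorem~\ref{thm:poincare1} and Lemma~\ref{lemma:finding-ford} to identify the Ford domain and obtain geometric finiteness, invoke Lemma~\ref{lemma:simple-ford-uniform} for the homeomorphism type, and conclude minimal parabolicity from Lemma~\ref{lemma:min-parabolic}. Your treatment of the edge cycles coming from the vertical walls is a bit more explicit than the paper's, but the argument is the same.
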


\begin{proof}
We have seen that $I({\rho(\gamma)})$, $I({\rho(\gamma^{-1})})$, and
the translates of these isometric spheres under $\Gamma_\infty$ are
all disjoint.  Select a vertical fundamental domain for
$\Gamma_\infty$ which contains the isometric spheres
$I({\rho(\gamma)})$ and $I({\rho(\gamma^{-1})})$.  This is possible by
choice of $\rho(\alpha)$ and $\rho(\beta)$, particularly because the
translation lengths $|a|$ and $|b|$ are greater than $2|c|$.

Let $P$ be the region in the interior of the vertical fundamental
domain, exterior to the half--spaces $B(\rho(\gamma))$ and
$B({\rho(\gamma^{-1})})$ bounded by $I({\rho(\gamma)})$ and
$I({\rho(\gamma^{-1})})$, respectively.  Then when we identify
vertical sides of $P$ via elements of $\Gamma_\infty$, and identify
$I({\rho(\gamma)})$ and $I({\rho(\gamma^{-1})})$ via
$\rho(\gamma^{-1})$, the object we obtain is a smooth hyperbolic
manifold, by Theorem \ref{thm:poincare1}, since $P$ has no edges.
Lemma \ref{lemma:finding-ford} now implies that $P$ is a Ford domain
for $\HH^3/\Gamma$, and that $\Gamma$ is geometrically finite.  Lemma
\ref{lemma:min-parabolic} implies $\Gamma$ is minimally parabolic.
Finally, Lemma \ref{lemma:simple-ford-uniform} shows $\HH^3/\Gamma$ is
homeomorphic to the interior of $C$, so this is indeed a
uniformization of $C$.
\end{proof}

We conclude this section by stating a lemma that will help us identify
representations which are \emph{not} discrete.  It is essentially the
Shimizu--Leutbecher lemma \cite[Proposition II.C.5]{maskit}. 

\begin{lemma}
Let $\Gamma$ be a discrete, torsion free subgroup of ${\rm
  PSL}(2,\CC)$ such that $M=\HH^3/\Gamma$ has a rank two cusp.
Suppose that the point at infinity projects to this cusp, and let
$\Gamma_\infty$ be its stabilizer in $\Gamma$.  Then for all $\zeta
\in \Gamma \setminus \Gamma_\infty$, the isometric sphere of $\zeta$
has radius at most the minimal (Euclidean) translation length of all
non-trivial elements in $\Gamma_\infty$.
\label{lemma:not-gf}
\end{lemma}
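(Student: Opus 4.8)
The plan is to exploit the interaction between $\zeta \in \Gamma\setminus\Gamma_\infty$ and a suitable translation in $\Gamma_\infty$, in the classical Shimizu–Leutbecher manner. Fix a non-trivial $\tau \in \Gamma_\infty$ of minimal Euclidean translation length $\ell$; since the point at infinity projects to a rank two cusp, $\tau$ is parabolic fixing infinity, and in the standard normalization $\tau$ acts on $\CC$ by $z \mapsto z + t$ with $|t| = \ell$. Suppose for contradiction that some $\zeta \in \Gamma\setminus\Gamma_\infty$ has isometric sphere of Euclidean radius $r > \ell$. Writing $\zeta = \mat{a&b\\c&d}$, Lemma \ref{lemma:iso-center-rad} gives $r = 1/|c|$, so the hypothesis is $|c| < 1/\ell$, i.e. $|c|\,|t| < 1$.

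The key step is to show that the sequence obtained by iterating the commutator-type operation $\zeta_0 = \zeta$, $\zeta_{n+1} = \zeta_n \tau \zeta_n^{-1}$ (or equivalently tracking $\zeta_n = \zeta \tau^{\pm} \zeta^{-1} \tau^{\mp}\cdots$) produces infinitely many distinct elements accumulating to the identity, contradicting discreteness. Concretely, one computes that conjugating $\tau = \mat{1&t\\0&1}$ by $\zeta$ yields a matrix whose lower-left entry has absolute value $|c|^2 |t|$, and then that the ``defect from $\Gamma_\infty$'' contracts: if $\zeta_n$ has lower-left entry $c_n$ with $|c_n|\,|t| < 1$, then $\zeta_{n+1}$ has lower-left entry of size roughly $|c_n|^2|t| \cdot (\text{bounded factor})$, which under the strict inequality forces $|c_n| \to 0$. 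Since $|c_n| = 0$ would mean $\zeta_n \in \Gamma_\infty$, and one checks inductively (using that $\zeta \notin \Gamma_\infty$ and the strict inequality) that each $\zeta_n \notin \Gamma_\infty$, we get infinitely many elements with $|c_n| \to 0$ but $c_n \neq 0$; these are distinct and their isometric spheres have radii $\to \infty$, while the elements themselves converge to a parabolic or to the identity, violating discreteness. Alternatively one runs the standard argument showing $\zeta_n \to \mathrm{id}$ directly from the matrix entries.

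The main obstacle is the bookkeeping in the iteration: one must choose the sign of the power of $\tau$ at each stage (or pre-compose with an element of $\Gamma_\infty$) so that the translation part stays controlled while $|c_n|$ genuinely decreases, and one must verify that no $\zeta_n$ accidentally lands in $\Gamma_\infty$ before the contradiction is reached. This is exactly the delicate point in the usual proof of Shimizu–Leutbecher, and here it is slightly cleaner because $\Gamma_\infty$ has rank two, so one has a lattice of translations available and can always pick $\tau$ realizing the minimum. Once the contradiction with discreteness is in hand, the conclusion $r \le \ell$ follows, which is the statement of Lemma \ref{lemma:not-gf}. I would cite \cite[Proposition II.C.5]{maskit} for the underlying estimate and present only the adaptation to the rank two cusp setting.
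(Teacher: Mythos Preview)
The paper does not actually supply a proof of Lemma~\ref{lemma:not-gf}; it simply identifies the statement as the Shimizu--Leutbecher lemma and cites \cite[Proposition II.C.5]{maskit}. Your proposal is precisely a sketch of the standard proof one finds there: fix a minimal-length parabolic $\tau(z)=z+t$, assume $|c|\,|t|<1$ for some $\zeta=\mat{a&b\\c&d}\in\Gamma\setminus\Gamma_\infty$, and iterate $\zeta_{n+1}=\zeta_n\,\tau\,\zeta_n^{-1}$. The computation gives $c_{n+1}=-c_n^2 t$, hence $|c_n|\,|t|=(|c_0|\,|t|)^{2^n}\to 0$ while $c_n\neq 0$, and the $\zeta_n$ converge to $\tau$, so $\zeta_n\tau^{-1}$ are distinct non-identity elements tending to the identity, contradicting discreteness. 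So your approach matches what the paper intends.

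Two minor remarks. First, the ``delicate point'' about choosing signs of powers of $\tau$ at each stage is unnecessary here: because $\tau$ is parabolic, the simple conjugation iteration above already gives geometric (in fact quadratic) contraction of $|c_n|$ with no auxiliary choices. That subtlety arises in J{\o}rgensen's inequality (loxodromic case), not in Shimizu--Leutbecher. Second, the rank two hypothesis on $\Gamma_\infty$ plays no role in the argument beyond guaranteeing a non-trivial parabolic of minimal translation length; rank one would suffice for the inequality itself. The rank two assumption is present only because that is the ambient setting of the paper.
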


\section{Algorithm to compute Ford domains}\label{sec:algorithm}

We will use Ford domains to study geometrically finite minimally
parabolic uniformizations of the $(1;2)$--compression body.  To
facilitate this study, we have developed algorithms to construct Ford
domains.  In this section, we present an algorithm which is guaranteed
to construct the Ford domain, but is impractical.  We also present a
practical procedure which we have implemented, which we conjecture
will always construct the Ford domain of the $(1;2)$--compression
body.

\subsection{An initial algorithm}

Let $\Gamma$ be a discrete, geometrically finite subgroup of ${\rm
  PSL}(2,\CC)$ such that $\HH^3/\Gamma$ is homeomorphic to the
interior of the $(1;2)$--compression body.  We will assume that
$\Gamma$ is given by an explicit set of matrix generators.  We now
present an (impractical) algorithm to find the Ford domain of
$\HH^3/\Gamma$.  Assume without loss of generality that in the
universal cover $\HH^3$, the point at infinity is fixed by the rank 2
cusp subgroup, $\Gamma_\infty < \Gamma$.

\begin{algorithm}
Enumerate all elements of the group: $\Gamma =\{g_1,g_2, g_3,
\dots\}$.  Again we assume that each $g_i$ is given as a matrix with
explicit entries.  Step through the list of group elements.  At the
$n$-th step:
\begin{enumerate}
\item Draw isometric spheres corresponding to $g_n$ and $g_n^{-1}$.

\item If these isometric spheres are visible over other previously
  drawn isometric spheres (corresponding to $g_1, \dots, g_{n-1}$ and
  their inverses), check if the object obtained by gluing pairs of
  currently visible, previously drawn isometric spheres via the
  corresponding isometries satisfies the hypotheses of Theorem
  \ref{thm:poincare1}.  

\item If it does satisfy these hypotheses, then by the Poincar{\'e}
  polyhedron theorem, Theorem \ref{thm:poincare2}, the fundamental
  group of the manifold is generated by isometries corresponding to
  face identifications.  Therefore, if we can write the generators of
  $\Gamma$ as words in the isometries of these faces, we will be done,
  by Lemma \ref{lemma:finding-ford}.  Put this manifold into a list of
  manifolds built by repeating the previous two steps.

\item For each manifold in the list of manifolds built by steps (1)
  and (2), we have an enumeration of words in the group elements
  generated by gluing isometries of faces: $L= \{h_1, h_2, \dots\}$.
	\begin{enumerate}
	\item For each generator $g$ of $\Gamma$, step through the first $n$
    words of $L$ to see if $g$ equals one of these words.

	\item If each $g$ can be written as a word in one of the first $n$
    elements of $L$, we are done.  The Ford domain is given by the
    isometric spheres which are the faces of this manifold.
	\end{enumerate}
\end{enumerate}
\label{algorithm:impractical}
\end{algorithm}

Note that in step (2), if we find that isometric spheres glue to give
a manifold, it does not necessarily follow that this manifold is our
original compression body.  For example, we may have found a
non-trivial cover of the original compression body.  Therefore, steps
(3) and (4) are required.

Since Ford domains of geometrically finite hyperbolic manifolds have a
finite number of faces, after a finite number of steps, Algorithm
\ref{algorithm:impractical} will have drawn all isometric spheres
corresponding to visible faces.  Since identifying a finite number of
generators as words in a finite number of generators given by face
pairings can be done in a finite number of steps, after a finite
number of steps the algorithm will terminate.

\subsection{A practical procedure}

The algorithm above is impractical for computer implementation.  In
this section we present a practical procedure, which will generate the
Ford domain and terminate in many cases for a $(1;2)$--compression
body.  We conjecture it will terminate for all cases.

We have implemented this procedure, and used the images it produced to
analyze behavior of paths of Ford domains.  The computer images of
this paper were generated by this program.

\begin{procedure}
Let $\alpha$, $\beta$ be parabolic, fixing a common point at infinity
in $\HH^3$.  Let $\gamma$ be loxodromic, such that $\langle\alpha, \beta,
\gamma\rangle \cong (\ZZ \times \ZZ) \ast \ZZ$.  

Conjugate such that
$$\alpha = \mat{1&a\\0&1}, \quad \beta = \mat{1&b\\0&1}, \quad \gamma
= \mat{c&-1\\1&0}.$$

We will hold two lists: The list of elements to draw, $L_0$, and the
list of elements that have been drawn $L_1$.  These are ordered
lists. 

\emph{Initialization.}  Replace $\alpha$ and $\beta$ if necessary, so
that the lattice generated by $a$ and $b$ has generators of shortest
length. 

Replace $\gamma$ if necessary so that $\gamma(\infty)$ is within the
parallelogram with vertices at $0 = \gamma^{-1}(\infty)$, $a$, $b$,
and $a+b$.

Add $\gamma$ and $\gamma^{-1}$ to the list of elements to draw, $L_0$.

\emph{Loop.}  While the list $L_0$ is non-empty, do the following.
\begin{enumerate}
\item Remove the first element of $L_0$, call it $\zeta$.  Consider
  the isometric sphere of $\zeta$.  Check $I(\zeta)$ against elements
  of $L_1$.  If $I(\zeta)$ is no longer visible, discard and start
  over with the next element of $L_0$.  If $I(\zeta)$ is still
  visible, draw the isometric sphere determined by $\zeta$ to the
  screen.  Add $\zeta$ to the end of the list $L_1$.

  Now also draw isometric spheres of each element of the form
  $w=\alpha^{\epsilon}\beta^{\delta}I(\zeta)$, where $\epsilon$,
  $\delta$ lie in $\{0, \pm 1, \pm 2, \dots \pm m\}$, with $m$ chosen
  so that we draw only those translates of $I(\zeta)$ which are contained
  in the region of the screen.

\item For each $\xi$ in the list of drawn elements $L_1$, find
  integers $p$, $q$ such that the center of $\alpha^p\beta^q I(\zeta)$
  is nearest the center of $\xi$.

  For each isometric sphere of the form
  $\alpha^{p+\epsilon}\beta^{q+\delta}I(\zeta) = I(\zeta
  \beta^{-q-\delta} \alpha^{-p-\epsilon})$, with $\epsilon$, $\delta$
  in $\{0, \pm 1\, \pm 2, \pm 3 \}$, check if that isometric sphere
  and $I(\xi)$ intersect visibly.  That is, check if they intersect
  and, if so, if the edge of their intersection is visible from
  infinity.  (In the case of $I(\zeta)$, no need to check for
  intersections of $I(\zeta)$ and the isometric sphere of the newly
  added last element $\zeta$ of $L_1$.)

We claim that if $I(\xi)$ intersects any translate of $I(\zeta)$ under
$\Gamma_\infty$, then that translate will have the form
$\alpha^{p+\epsilon}\beta^{q+\delta} I(\zeta)$ where $\epsilon$, $\delta$
are in $\{0, \pm 1, \pm 2, \pm 3 \}$.  See Lemma
\ref{lemma:translate-zeta} below.

\item If $\alpha^{p+\epsilon} \beta^{q+\delta} I(\zeta)$ and $I(\xi)$
  do intersect visibly, then the isometric sphere of the element $\xi
  w^{-1}$ should be drawn, where $w = \zeta \beta^{-q-\delta}
  \alpha^{-p-\epsilon}$, so that $I(w) = \alpha^{p+\epsilon}
  \beta^{q+\delta} I(\zeta)$.  Step through the lists $L_1$ and $L_0$
  to ensure the isometric sphere $I(\xi w^{-1})$ hasn't been drawn
  already, and is not yet slated to be drawn (to avoid adding the same
  sequence of faces repeatedly -- note there are more time effective
  ways of ensuring the same thing).  If $\xi w^{-1}$ is not in either
  list, then add $\xi w^{-1}$, and $w \xi^{-1}$ to the end of the list
  $L_0$ to be drawn.
\end{enumerate}
\label{procedure}
\end{procedure}

\begin{lemma}\label{lemma:translate-zeta}
  Suppose $\alpha$ and $\beta$ are parabolic fixing the point at
  infinity, chosen as above such that $\alpha$ has the shortest
  translation length in the group $\langle \alpha,\beta\rangle \cong
  \ZZ \times \ZZ$, and such that $\beta$ has the shortest translation
  length of all parabolics independent from $\alpha$.  Suppose $\xi$
  and $\zeta$ are loxodromic such that the group $\langle \alpha,
  \beta, \xi, \zeta \rangle$ is discrete.  Choose integers $p,q$ such
  that the center of $I(\xi)$ is nearer the center of $ \alpha^p
  \beta^q I(\zeta)$ than the center of any other translate of
  $I(\zeta)$ under $\langle \alpha, \beta\rangle$.  Then if $I(\xi)$
  intersects any translate of $I(\zeta)$, that translate must be of
  the form $\alpha^{p + \epsilon} \beta^{q + \delta} I(\zeta)$ for
  $\epsilon, \delta \in \{0, \pm 1, \pm 2, \pm 3\}$.
\end{lemma}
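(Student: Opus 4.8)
The plan is to bound the radii of the relevant isometric spheres using Lemma~\ref{lemma:not-gf} (Shimizu--Leutbecher), and then do a Euclidean packing estimate in $\CC$. First I would set $t_\alpha = |a|$ and $t_\beta = |b|$ for the translation lengths of $\alpha$ and $\beta$; by the choice in the hypothesis, $t_\alpha$ is the shortest translation length in $\langle\alpha,\beta\rangle$ and $t_\beta$ is the shortest among parabolics independent of $\alpha$, so the fundamental parallelogram for $\Gamma_\infty = \langle\alpha,\beta\rangle$ is a reduced lattice basis. Since $\langle\alpha,\beta,\xi,\zeta\rangle$ is discrete and has the point at infinity as a rank-two cusp, Lemma~\ref{lemma:not-gf} gives that the Euclidean radius of $I(\zeta)$, and of $I(\xi)$, is at most $t_\alpha \le t_\beta$. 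Hence if $I(\xi)$ meets a translate $\alpha^m\beta^n I(\zeta)$, the distance in $\CC$ between their centers is at most the sum of the radii, which is at most $2t_\alpha$.

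Next I would turn this into a bound on how far $\alpha^m\beta^n I(\zeta)$ can be from the optimal translate $\alpha^p\beta^q I(\zeta)$. By the triangle inequality in $\CC$, the center of $\alpha^m\beta^n I(\zeta)$ lies within distance $2t_\alpha$ of the center of $I(\xi)$, which (by the minimizing choice of $p,q$) lies within distance at most the covering radius of the lattice $\Lambda = \ZZ a + \ZZ b$ from the center of $\alpha^p\beta^q I(\zeta)$. The covering radius of a planar lattice with reduced basis $a,b$ (so $|a|\le|b|\le|a+b|,|a-b|$) is at most $\tfrac12|a+b| \le \tfrac12(|a|+|b|) \le |b| \le$ something of order $t_\alpha$; more carefully one shows the covering radius is at most $t_\beta$ when the basis is reduced, but a cruder bound of the form $\le \tfrac{1}{2}\sqrt{t_\alpha^2+t_\beta^2}$ suffices. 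Combining, the lattice vector $(m-p)a + (n-q)b$ has length at most $2t_\alpha + (\text{covering radius}) \le C\, t_\alpha$ for an explicit constant $C$ (around $3$ or so). Then, because $a,b$ is a reduced basis, any lattice vector of length $\le C t_\alpha$ has coordinates $(m-p,n-q)$ bounded in absolute value: writing a vector as $ua+vb$ with $|ua+vb|$ small forces $|u|,|v|$ small, since a reduced basis has the property that $|ua+vb| \ge c(|u|+|v|)t_\alpha$ for a universal $c>0$ (this is the standard fact that reduced bases are ``quasi-orthogonal''). Choosing constants carefully yields $|m-p|,|n-q|\le 3$.

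The main obstacle I expect is the Euclidean lattice bookkeeping: getting the constant to come out to exactly $3$ (rather than some larger number) requires using the reducedness of the basis $a,b$ efficiently in two places at once --- once to bound the covering radius of $\Lambda$, and once to convert a length bound on a lattice vector into a bound on its integer coordinates. The cleanest route is probably to normalize so that $a$ is real and positive, write $b = b_1 + i b_2$ with $0 \le b_1 \le a/2$ and $b_2 > 0$ (possible after replacing $b$ by $b\pm a$ and negating, which the Initialization step permits), note $|b|\ge a$ forces $b_2 \ge \tfrac{\sqrt3}{2}a$, and then both estimates become elementary inequalities in $a, b_1, b_2$. One should double-check the edge case where $\xi$ and $\zeta$ are such that $I(\xi)$ is itself a $\Gamma_\infty$-translate of $I(\zeta^{\pm 1})$; here $p,q$ is chosen so that $\alpha^p\beta^q I(\zeta)$ is the nearest translate, the two may actually be tangent or coincide, and one checks the conclusion still holds (indeed $\epsilon=\delta=0$ or a neighbor). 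Modulo that case analysis, the argument is a direct packing estimate, and the bound $\{0,\pm1,\pm2,\pm3\}$ should have some slack to spare.
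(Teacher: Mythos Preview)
Your overall strategy---bound the radii of $I(\xi)$ and $I(\zeta)$ by the Shimizu--Leutbecher lemma, then do a Euclidean estimate in $\CC$---is exactly the paper's. The problem is the specific route you take in the second paragraph. You assert that the lattice vector $(m-p)a + (n-q)b$ has length at most $2t_\alpha + R$, where $R$ is the covering radius of $\Lambda$, and then that $R \le C\,t_\alpha$ for an explicit constant. The first inequality is fine, but the second is false: the covering radius is of order $t_\beta$, not $t_\alpha$, and $t_\beta/t_\alpha$ can be arbitrarily large. (For the rectangular lattice with $a=1$, $b = ib_2$, one has $R = \tfrac12\sqrt{1+b_2^2} \sim b_2/2$.) Your quasi-orthogonality estimate $|ua+vb| \ge c(|u|+|v|)t_\alpha$ is correct, but combined with $|ua+vb| \le 2t_\alpha + R$ it only yields $|u|+|v| \lesssim t_\beta/t_\alpha$, which is useless. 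The triangle inequality throws away the information that $y$ lies in the Voronoi cell of $x$; that cell is very thin in the $\alpha$-direction regardless of how long $b$ is, and it is precisely this thinness that bounds $|\epsilon|$.

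The fix is what you flag as ``the cleanest route'' but do not carry out: work coordinate-wise rather than with the norm. The paper normalizes so $a=1$ and $b = b_1 + ib_2$ with $|b_1|\le 1/2$, $b_2 \ge \sqrt{3}/2$; then all radii are $\le 1$ and intersection forces $|y - x'| < 2$. From $y$ lying in the Voronoi cell of $x=0$ one extracts $|\mathrm{Re}(y)| \le 1/2$ and a bound on $\mathrm{Im}(y)$ relative to $b_2$. Comparing imaginary parts of $y$ and $x' = \epsilon + \delta b$ immediately gives $|\delta b_2| \le 2 + \mathrm{Im}(y)$, hence $|\delta|$ is small; then comparing real parts gives $|\epsilon| \le 2 + 1/2 + |\delta|\cdot 1/2$, hence $|\epsilon| \le 3$. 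The crucial point is that the Voronoi constraint in the $\alpha$-direction is $|\mathrm{Re}(y)| \le 1/2$ independent of $t_\beta$, which your length-only bound cannot see.
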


\begin{proof}
Apply an isometry to $\HH^3$ so that $\alpha$ translates by exactly
$1$ along the real axis in $\CC$.  Note that after this isometry, by
Lemma \ref{lemma:not-gf}, all isometric spheres have radius at most
$1$.  Hence if two intersect, the distance between their centers is
less than $2$.  Let $x$ denote the center of $\alpha^p \beta^q
I(\zeta)$.  We may apply another isometry of $\HH^3$ so that $x=0$ in
$\CC$.  Finally, since $\beta$ is the shortest translation independent
of $\alpha$, $\beta$ must translate $x$ to be within the hyperbolic
triangle on $\CC$ with vertices $1/2 + i\sqrt{3}/2$, $-1/2 +
i\sqrt{3}/2$, $\infty$.  

Since the center of $I(\xi)$, denote it by $y$, is closer to $x$ than
to any of the translates of $x$ under $\langle \alpha, \beta \rangle$,
the real coordinate of $y$ in $\CC$ must have absolute value at most
$1/2$.  Similarly, the difference in imaginary coordinates of $y$ and
$\beta x$ is at least $\sqrt{3}/6$, for otherwise the square of the
distance between $y$ and some lattice point of the form
$\alpha^\epsilon \beta x$ is at most $(1/2)^2 + (\sqrt{3}/6)^2 = 1/3$.
Finally, we may assume the imaginary coordinate of $y$ is positive, by
symmetry of the lattice.

Suppose $I(\xi)$ meets $\alpha^{p+\epsilon}\beta^{q+\delta}I(\zeta)$,
where one of $|\epsilon|$ or $|\delta|$ is greater than $3$.  Then the
distance between $y$ and $\alpha^\epsilon \beta^\delta x$ on $\CC$ is
at most $2$.  On the other hand, if $|\delta| \geq 3$, then the
difference between the imaginary coordinates of $y$ and
$\alpha^\epsilon \beta^\delta x$ is at least $\sqrt{3} + \sqrt{3}/6 >
2$, which is a contradiction.  So suppose $|\delta| < 3$ and
$|\epsilon|>3$.  Then the difference in real coordinates of
$\alpha^\epsilon \beta^\delta x$ and $y$ is at least $4 - 1/2 -
\delta\cdot 1/2 > 2$, which is again a contradiction.
\end{proof}

\begin{theorem}
Suppose each of the spheres drawn by Procedure \ref{procedure} is a
face of the Ford domain of a geometrically finite uniformization of
the $(1;2)$--compression body $C$.  Then the procedure draws (at least
one translate under $\Gamma_\infty$ of) all visible isometric spheres,
and the procedure terminates.
\label{thm:algorithm}
\end{theorem}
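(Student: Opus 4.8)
The plan is to argue that the procedure, interpreted as a bookkeeping device on the set of visible faces of the Ford domain, never stalls before recording every visible face, and that the total number of faces it records is finite. The hypothesis is exactly the statement that every sphere the procedure draws is a genuine face of the equivariant Ford domain $\F$; so the first order of business is to observe that, under this hypothesis, the list $L_1$ grows as a subset of the (finitely many, by Corollary \ref{cor:bowditch}, since the uniformization is geometrically finite) $\Gamma_\infty$-orbits of visible isometric spheres. Hence termination is automatic once I show the procedure never adds the same $\Gamma_\infty$-orbit of a face twice — which is precisely what the ``step through $L_1$ and $L_0$'' check in step (3) enforces. So the real content is completeness: that the procedure exhausts all visible faces.

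For completeness I would argue by contradiction. Suppose some visible isometric sphere $I(k)$ (or a $\Gamma_\infty$-translate of it) is never drawn. Consider the subcomplex of the Ford spine built from the faces the procedure \emph{does} draw; because the drawn faces are all genuine, this is a union of closed faces of the Ford spine, and it is nonempty (it contains the face corresponding to $\gamma$, added at initialization — here I use the initialization normalization that places $\gamma(\infty)$ in the fundamental parallelogram, so that $I(\gamma)$ and $I(\gamma^{-1})$ are genuinely visible by an argument like that in Lemma \ref{lemma:simple-ford}). Since the Ford spine of $C$ is connected (it is a spine, together with the boundary, of the connected manifold $(\HH^3\cup\Omega)/\Gamma$ by Lemma \ref{lemma:spine}), there must be a drawn face $F_\xi$ (corresponding to some $\xi\in L_1$) sharing an edge with an undrawn face. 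That shared edge is a visible $1$-cell of $\F$ lying on $I(\xi)\cap I(w)$ for some $w$ whose isometric sphere is a $\Gamma_\infty$-translate of the sphere of some already-drawn or yet-to-be-drawn element. Now I invoke Lemma \ref{lemma:translate-zeta}: since $I(\xi)$ meets this translate of $I(\zeta)$, the translate has the form $\alpha^{p+\epsilon}\beta^{q+\delta}I(\zeta)$ with $\epsilon,\delta\in\{0,\pm1,\pm2,\pm3\}$, i.e.\ it lies in the finite family that step (2) of the loop actually checks when $\xi$ and $\zeta$ are both on $L_1$. Step (3) then adds $\xi w^{-1}$ (and its inverse) to $L_0$, and by Lemma \ref{lemma:edge-visible} the chain-rule produces a visible face $I(\xi w^{-1})\cap I(w^{-1})$ equal to the image of the shared edge; tracing through, one sees the newly added element's isometric sphere $I(\xi w^{-1})$ is exactly (a translate of) the undrawn face across that edge. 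So the edge cannot in fact separate a drawn face from a permanently undrawn one — contradiction.

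The main obstacle I anticipate is the interleaving of the two loops: $\zeta$ (the freshly popped element of $L_0$, drawn in step (1)) plays an asymmetric role from $\xi$ (ranging over the already-drawn list $L_1$) in steps (2)–(3), so I must check that \emph{every} adjacency between two visible faces is caught — not just those where one face happens to be the ``new'' one. The clean way to handle this is to process the argument one face-addition at a time: whenever a face is added to $L_1$, step (2) is run pairing it against the entire current $L_1$, and I should verify that by the time the procedure could terminate (i.e.\ $L_0$ empties) every pair of visible faces sharing an edge has been tested against each other by one of these passes. This reduces to showing the ``discard if no longer visible'' clause in step (1) never discards a sphere that will later turn out to bound a visible face — which follows because visibility of a face, once established against all of $\Gamma\setminus(\Gamma_\infty\cup\Gamma_\infty g)$ (Lemma \ref{lemma:visible-strict-inequality}), is a property of $\F$ that cannot be revoked by drawing more spheres. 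A secondary, more routine obstacle is confirming that the fixed window $\{0,\pm1,\pm2,\pm3\}^2$ in step (2) together with the nearest-center choice of $(p,q)$ really does capture all $\Gamma_\infty$-translates that meet $I(\xi)$ — but that is exactly the content of Lemma \ref{lemma:translate-zeta}, which is already in hand, so I would simply cite it.
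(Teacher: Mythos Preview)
Your termination argument is essentially the paper's: by hypothesis every drawn sphere is visible, and Corollary~\ref{cor:bowditch} bounds the number of $\Gamma_\infty$-orbits of visible faces, so the check in step~(3) guarantees $L_0$ empties in finitely many steps.

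Your completeness argument, however, has a genuine gap. You argue by connectivity of the Ford spine: if some visible face is missed, there is a drawn face $I(\xi)$ sharing a visible edge with an undrawn face $I(w)$, and then you claim the procedure would pick this up. But step~(2) of the procedure only tests $I(\xi)$ against translates of \emph{already-drawn} spheres $I(\zeta)$, $\zeta\in L_1$. Your sentence asserting that $I(w)$ ``is a $\Gamma_\infty$-translate of the sphere of some already-drawn or yet-to-be-drawn element'' is exactly what needs proof; as written it is circular (if $I(w)$ were a translate of a drawn sphere, its orbit would already be drawn). And even granting that $I(\xi)$ meets some translate of a drawn $I(\zeta)$, the element the procedure adds is $\xi w^{-1}$ with $I(w)$ that translate --- this is not the undrawn neighbor across the edge, so your ``tracing through'' does not land where you claim. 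Lemma~\ref{lemma:edge-visible} itself warns that the visible face across an edge from $I(h^{-1})$ need not be $I(gh^{-1})$.

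The paper's route avoids this entirely. Rather than arguing by connectivity, it shows directly that the drawn spheres satisfy the hypotheses of the Poincar\'e polyhedron theorem: whenever two drawn spheres $I(g),I(h)$ meet in a visible edge, step~(3) draws $I(gh^{-1})$ and $I(hg^{-1})$, and the theorem's hypothesis forces these to be visible. One then checks that the edge class of $I(g)\cap I(h)$ is exactly the three-term cycle through $I(g^{-1})\cap I(hg^{-1})$ and $I(h^{-1})\cap I(gh^{-1})$, with monodromy $h^{-1}\circ hg^{-1}\circ g = 1$ and dihedral angle sum $2\pi$. Theorem~\ref{thm:poincare1} then says the drawn faces glue to a smooth hyperbolic manifold, and Lemma~\ref{lemma:finding-ford} says this forces them to be the full equivariant Ford domain. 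The point you are missing is that the hypothesis ``every drawn sphere is visible'' is precisely what rules out edge classes of length four or more among drawn faces, and that in turn is what makes Poincar\'e apply --- it is not used merely to bound the size of $L_1$.
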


\begin{proof}
The fact that the procedure terminates follows from Corollary
\ref{cor:bowditch}: there are only finitely many visible faces, and
each face the procedure draws is visible.

The fact that the procedure draws all visible isometric spheres of the
Ford domain will follow from Lemma \ref{lemma:finding-ford} and the
Poincar{\'e} polyhedron theorem, as follows.

First, suppose the faces corresponding to $\gamma$ and $\gamma^{-1}$
are visible, and they do not intersect each other or any other faces.
Then the procedure terminates after drawing these faces and a few
translates under $\Gamma_\infty$.  Because there are no edges of
intersection, the argument of Lemma \ref{lemma:simple-ford} implies
that the only visible face of the Ford domain corresponds to $\gamma$
(and $\gamma^{-1}$), and in this case we are done.

So suppose two isometric spheres drawn by the procedure intersect.
Say isometric spheres $I(g)$ and $I(h)$ intersect.  Then the procedure
will draw $I(gh^{-1})$.  Since the procedure only draws visible
isometric spheres, $I(gh^{-1})$ must be visible.  By Lemma
\ref{lemma:edge-visible}, it intersects $I(h^{-1})$ in an edge which
is mapped isometrically to the edge of $I(g) \cap I(h)$.  Changing
roles of $g$ and $h$ in the same lemma, the isometric sphere
$I(hg^{-1})$ must be visible, and $I(hg^{-1}) \cap I(g^{-1})$ is
mapped isometrically to $I(g) \cap I(h)$.

Now notice that the faces of the Ford domain corresponding to the
pairs $I(g)$ and $I(g^{-1})$, $I(h)$ and $I(h^{-1})$, and $I(gh^{-1})$
and $I(hg^{-1})$ are the only faces that meet the edge class of
$I(g)\cap I(h)$ (up to translation by $\Gamma_\infty$).  This can be
seen by noting that $g$ takes $I(g) \cap I(h)$ and $I(h)$ to
$I(g^{-1}) \cap I(hg^{-1})$ and $I(hg^{-1})$, respectively.  Then
apply $hg^{-1}$.  This sends $I(g^{-1}) \cap I(hg^{-1})$ and
$I(g^{-1})$ to $I(h^{-1}) \cap I(gh^{-1})$ and $I(h^{-1})$,
respectively.  Finally apply $h^{-1}$, which sends $I(h^{-1}) \cap
I(gh^{-1})$ and $I(gh^{-1})$ to $I(h) \cap I(g)$ and $I(g)$,
respectively.  Thus the monodromy is given by $h^{-1} \circ hg^{-1}
\circ g = 1$.  As for dihedral angles around this edge class, because
the monodromy is the identity, the sum of the dihedral angles must be
a multiple of $2\pi$.  Since there are only three faces in the edge
class, and the dihedral angle between any two faces is less than
$\pi$, the sum of the dihedral angles around the edge $I(h) \cap I(g)$
must be exactly $2\pi$.  Now we have the hypotheses of the
Poincar{\'e} polyhedron theorem, Theorem \ref{thm:poincare1}.  That
theorem tells us that the gluing of the faces our procedure has drawn
gives a smooth hyperbolic manifold.  Lemma \ref{lemma:finding-ford}
implies that the procedure has drawn the entire Ford domain, as
desired.
\end{proof}

One way the hypotheses of Theorem \ref{thm:algorithm} might not hold
is if there is an edge class of the cell structure on the Ford domain
that meets more than three visible faces.  When two of the visible
faces intersect, say corresponding to $I(g)$ and $I(h)$, our procedure
will draw $I(gh^{-1})$.  However, if the edge class meets more than
three visible faces, the isometric sphere $I(gh^{-1})$ will not be
visible, and so the hypotheses of the theorem are not satisfied.  In
practice, we were unable to find a structure on the
$(1;2)$--compression body for which this situation arose.  S.~Burton
found such a structure on a $(1;3)$--compression body
\cite{burton:thesis}.  However, even in this higher genus case the
above procedure drew all visible isometric spheres for the example,
since the isometric sphere covering $I(gh^{-1})$ arose as the
intersection of other visible isometric spheres.  Based on
experimental evidence in the case of the $(1;2)$--compression body, we
offer the following conjecture.

\begin{conjecture}\label{conj:procedure}
  Procedure \ref{procedure} always draws the Ford domain for a
  geometrically finite uniformization for the $(1;2)$--compression
  body, and terminates.
\end{conjecture}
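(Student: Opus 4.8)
The plan is to close the gap between Theorem~\ref{thm:algorithm} and the conjecture. That theorem already shows that \emph{if} every isometric sphere the procedure commits to $L_1$ (and does not later discard) is a face of the Ford domain, then the procedure halts having drawn all visible faces. So it suffices to prove that the procedure never commits a non-face. The only mechanism by which the procedure produces a new element is the chain rule (Lemma~\ref{lemma:edge-visible}): when two already-drawn visible faces $I(g)$ and $I(h)$ meet in a visible edge, it enqueues $I(gh^{-1})$. Hence everything reduces to the following claim: \emph{whenever $I(g)\cap I(h)$ is a visible edge of the Ford domain of a geometrically finite $(1;2)$--compression body, $I(gh^{-1})$ is visible} --- equivalently, every edge class of the Ford spine is incident to exactly three faces, equivalently every face dual to an edge is a triangle.

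So the first and main step is to prove this triangle-duality claim. Here I would exploit the algebraic input that $\pi_1(C)=(\ZZ\times\ZZ)\ast\ZZ$ has only a \emph{single} free generator $\gamma$ beyond the rank-two cusp group $\Gamma_\infty$; this is exactly what should distinguish the $(1;2)$ case from the $(1;3)$ case, where Burton found a four-faced edge and $\pi_1$ has two free generators. Faces of the Ford spine correspond to double cosets $\Gamma_\infty g\Gamma_\infty$, and by Lemma~\ref{lemma:edge-visible} the faces around an edge of the cell structure on $\F$ are cyclically related by the operation $(g,h)\mapsto (gh^{-1},h^{-1})$. Walking once around the link of an edge incident to $k$ faces produces a cyclic word of length $k$ in the face-pairing isometries whose product is the identity (the monodromy condition, as in the proof of Theorem~\ref{thm:algorithm}), and hence a relation in $\Gamma\cong(\ZZ\times\ZZ)\ast\ZZ$. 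I would argue, using the normal form for free products together with a ping-pong estimate for the action of $\langle\gamma\rangle$ relative to $\Gamma_\infty$ (and the bound on isometric-sphere radii from Lemma~\ref{lemma:not-gf}), that no reduced cyclic relation of this shape and of length $k\geq 4$ is possible when only one free generator is available, while $k=2$ is excluded by visibility of both faces. This would force $k=3$ and identify the three faces as the $\Gamma_\infty$--translates of $I(g)$, $I(h)$, $I(gh^{-1})$.

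Granting the triangle-duality claim, the rest is routine. Every sphere the procedure enqueues is then the genuine third face at a visible edge, hence visible, so the hypothesis of Theorem~\ref{thm:algorithm} holds; it remains only to note that the procedure actually reaches every visible face. For that I would observe that the face-adjacency graph of the Ford spine of $C$ is connected --- because the Ford spine, together with $\partial_{+}C$, is a connected spine of $C$ by Lemma~\ref{lemma:spine} --- and that the procedure performs a breadth-first search of this graph starting from the face corresponding to $\gamma$: Lemma~\ref{lemma:edge-visible} guarantees that from any visible face already drawn, each visible neighbour is enqueued, and Lemma~\ref{lemma:translate-zeta} guarantees that the finite search window $\epsilon,\delta\in\{0,\pm1,\pm2,\pm3\}$ never misses a $\Gamma_\infty$--translate that must be tested. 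Termination then follows from Corollary~\ref{cor:bowditch}.

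The triangle-duality claim is where I expect the real difficulty, and it is worth stressing that it could fail even for $(1;2)$: the paper reports only experimental confirmation, and Burton's $(1;3)$ example shows the phenomenon is not ruled out by soft arguments alone. If the clean claim is false, the fallback is to prove termination directly by showing that whenever a $k\geq 4$ edge forces the procedure to enqueue a non-face $I(gh^{-1})$, that sphere is always already covered --- hence discarded in step~(1) of the Loop --- by faces the procedure has drawn by then as intersections of other visible faces, exactly as happened in Burton's example. Making this precise requires controlling the order in which the breadth-first search discovers faces relative to the edges they bound, and I would expect that monotonicity statement to be the technically delicate point of this alternative route.
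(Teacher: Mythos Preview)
The statement you are attempting to prove is Conjecture~\ref{conj:procedure}, and the paper does \emph{not} prove it. It is offered explicitly as a conjecture, supported only by experimental evidence, and the surrounding discussion (including Section~5.3) makes clear that the authors regard it as open. So there is no paper proof to compare your proposal against.

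As for your proposal itself, it is a plan rather than a proof, and you identify its own gap correctly: the triangle-duality claim (that every visible edge of $\F$ is met by exactly three faces) is the entire content of the conjecture, and you do not prove it. The sketch invoking normal forms in $(\ZZ\times\ZZ)\ast\ZZ$ and a ping-pong estimate is not an argument --- the monodromy relation around a $k$-faced edge is always the trivial word (as shown in the proof of Theorem~\ref{thm:algorithm} for $k=3$), so there is no nontrivial relation in the free product to obstruct, and nothing in the algebra alone prevents $k\geq 4$. What distinguishes $(1;2)$ from $(1;3)$ must be geometric, not purely group-theoretic, and your sketch does not supply that geometry.

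There is a second gap you do not flag. Your reduction to Theorem~\ref{thm:algorithm} requires that every sphere the procedure commits to $L_1$ is a face of the Ford domain, and the very first sphere it commits is $I(\rho(\gamma))$, checked only against the empty list $L_1$. If $I(\rho(\gamma))$ is not visible in the actual Ford domain, the hypothesis fails at step one, before any chain-rule reasoning applies. The paper raises exactly this issue in Section~5.3: visibility of $I(\rho(\gamma))$ is the content of Conjecture~\ref{conj:dual-ford}, which is also open. Your breadth-first-search argument silently assumes this starting face is a vertex of the adjacency graph.
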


The generalization of Procedure \ref{procedure} to
$(1;n)$--compression bodies, for $n\geq 3$ has been shown to fail
by S.~Burton \cite{burton:thesis}.  That is, the procedure will not
necessarily draw the full Ford domain.  This is because in the higher
genus case, a choice of loxodromic generators may give an isometric
sphere which is completely covered by some visible isometric sphere.
As long as that visible isometric sphere is not one of our generators,
and as long as the isometric spheres of our generators remain disjoint
from that visible isometric sphere, the visible isometric sphere will
never be drawn by the above procedure.  However in the
$(1;2)$--compression body case, up to translation by $\Gamma_\infty$
there is only one choice for loxodromic generator, and so this issue
does not seem to arise.

\section{Examples of Ford domains}\label{sec:ford}
Recall that we are interested in isotopy classes of the core tunnel of
a $(1;2)$--compression body.  We use the computer program implementing
Procedure \ref{procedure} to study isotopy classes of the core tunnel
for many different geometrically finite uniformizations.  To identify
core tunnels in Ford domains, we will examine the dual structure to a
Ford domain.  In this section, we define the dual structure and
present several examples.  The examples were obtained by computer
using the procedure of the previous section.


\subsection{Paths of Ford domains}

Recall that if $C$ denotes the $(1;2)$--compression body, then
$\pi_1(C) \cong (\ZZ\times \ZZ)\ast \ZZ$ with generators we denote
$\alpha$ and $\beta$ for the $(\ZZ\times \ZZ)$ factor, and $\gamma$.
Let $\rho_0\co \pi_1(C) \to PSL(2,\CC)$ be the representation of
Example \ref{ex:simple-ford}.  Keeping the images of $\alpha$ and
$\beta$ parabolic, allow the images of the three generators $\alpha$,
$\beta$, and $\gamma$ to vary smoothly.  We obtain a smooth path of
discrete, faithful representations $\rho_t$.  For some amount of time,
these will be minimally parabolic geometrically finite uniformizations
of $C$.  As $\rho_t$ changes smoothly, the visible isometric spheres
of $\HH^3/\rho_t(\pi_1(C))$ will change smoothly.  In particular, we
can change the images of the generators such that two isometric
spheres bump into each other.  By Lemma \ref{lemma:edge-visible}, if
two visible isometric spheres intersect, then a new visible face must
arise when they meet.  We present two examples to illustrate some of
the behavior that may occur.

\begin{example}
Consider the smooth path of representations $\rho_t\co \pi_1(C) \to
PSL(2,\CC)$ given by
$$\rho_t(\alpha) = \mat{1&5+i\\ 0&1}, \quad \rho_t(\beta) = \mat{1&
  5.5i\\ 0&1}, \quad \rho_t(\gamma) = \mat{-1+i\,t& -1\\ 1& 0},$$
where $t$ runs from $2$ down to $1.2$.

Note here that $\rho_t(\alpha)$ and $\rho_t(\beta)$ are constant.
They were chosen somewhat arbitrarily to be parabolics fixing
infinity, with large enough Euclidean translation distance that
nontrivial translations under $\Gamma_\infty = \langle \rho_t(\alpha),
\rho_t(\beta) \rangle$ of the isometric spheres corresponding to
$\rho_t(\gamma^{\pm 1})$ and $\rho_t(\gamma^{\pm 2})$ don't meet any
of these original isometric spheres.

Consider the isometric spheres corresponding to $\rho_t(\gamma)$.  By
Lemma \ref{lemma:iso-center-rad}, these have radius $1$ throughout the
path.  When $t=2$, the isometric spheres of $\rho_t(\gamma)$ and
$\rho_t(\gamma^{-1})$, which have centers $0$ and $-1+i\,t$
respectively, do not intersect, so we have the simple Ford spine with
a single face as above.  However, as $t$ decreases, these two
isometric spheres first become tangent, at $t=\sqrt{3}$, and then
overlap for $t<\sqrt{3}$.  As these spheres meet, the isometric
spheres corresponding to $\rho_t(\gamma^2)$ and $\rho_t(\gamma^{-2})$
emerge, and their intersections with isometric spheres of
$\rho_t(\gamma)$ and $\rho_t(\gamma^{-1})$, respectively, become
visible, as predicted by Lemma \ref{lemma:edge-visible}.  We can
compute explicitly that for these particular representations, for $1.2
< t < \sqrt{3}$, the region cut out by the isometric spheres of
$\rho_t(\gamma^{\pm 1})$ and $\rho_t(\gamma^{\pm 2})$ and a vertical
fundamental domain for $\Gamma_\infty$ is a fundamental polyhedron for
a manifold, using Poincar{\'e}'s theorem \ref{thm:poincare1}.  By
Lemma \ref{lemma:finding-ford}, these isometric spheres must define
the Ford domain for the manifold $\HH^3/\rho_t(\pi_1(C))$.  Thus our
Ford spine has two faces, corresponding to $\rho_t(\gamma)$ and
$\rho_t(\gamma^2)$.  Figure \ref{fig:bumping-1} illustrates this
particular example.

We claim this is still a uniformization of $C$, i.e.\ that
$\HH^3/\rho(\pi_1(C))$ is homeomorphic to the interior of $C$.  The
Ford spine of $\HH^3/\rho(\pi_1(C))$ has two faces, one of which has
boundary which is the union of the 1--cell of the spine and an arc on
$\partial_+C$ (corresponding to $\gamma^{\pm 2}$).  Collapse the
1--cell and this face.  The result is a new complex with the same
regular neighborhood.  It now has a single 2--cell attached to $\D_+
C$.  Thus $\HH^3/\rho(\pi_1(C))$ is obtained by attaching a 2--handle
to $\D_+C \times I$, and then removing the boundary.  So
$\HH^3/\rho(\pi_1(C))$ is homeomorphic to the interior of $C$.
\label{ex:bumping}
\end{example}

\begin{figure}
\begin{center}
\begin{tabular}{cccc}
	\makebox{\begin{tabular}{c}
		\includegraphics{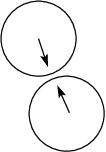} 
		\end{tabular}}
	\makebox{\begin{tabular}{c}
			\includegraphics{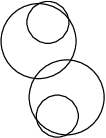}
	   \end{tabular}}
	& \hspace{1in} &
	\makebox{\begin{tabular}{c}
			\vspace{-.1in} \\
			\includegraphics[width=1.75in]{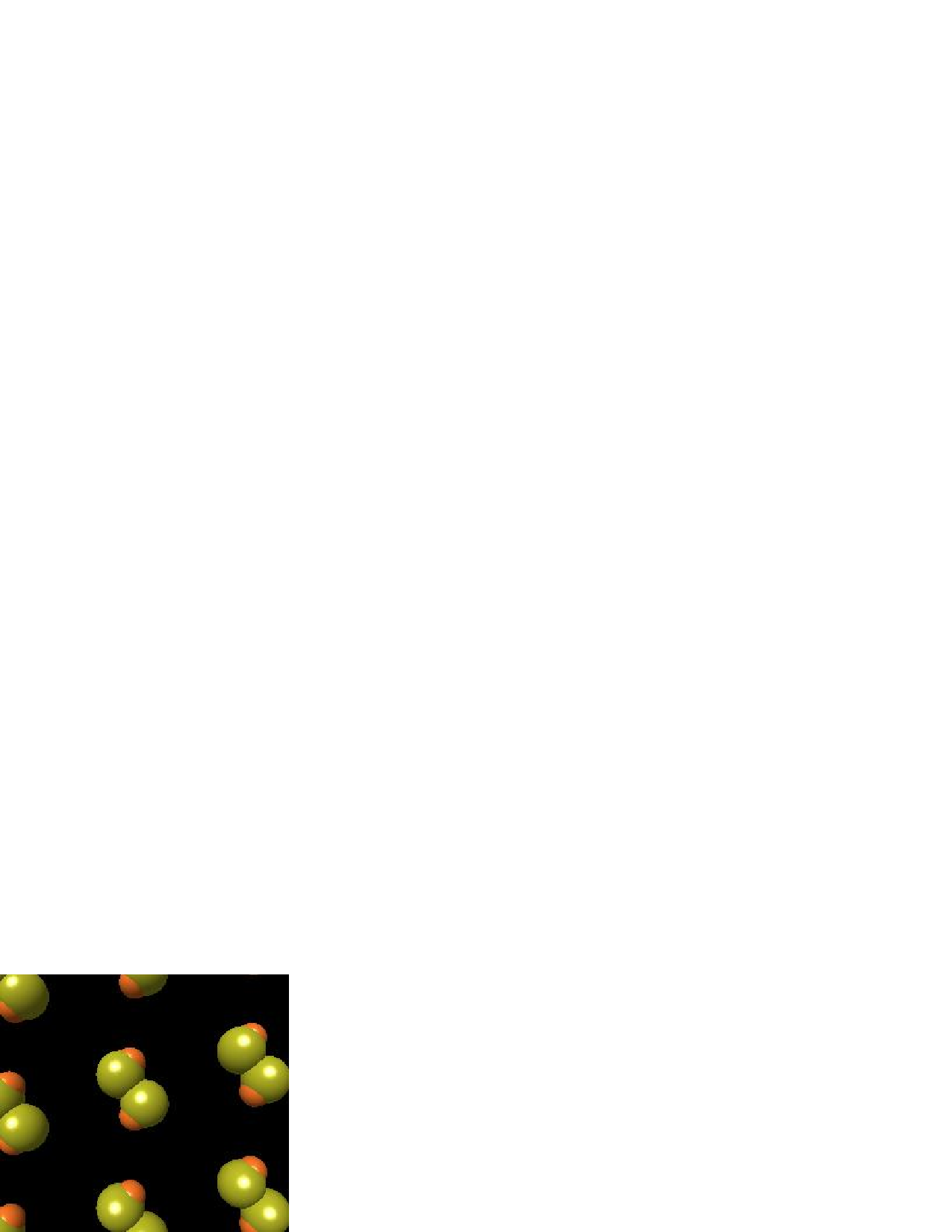}
			\end{tabular}}
\end{tabular}
\end{center}
\caption{Faces of the Ford domain meet.  Left: schematic picture for
	$t=2$ down to $t=1.2$.  Right: Computer generated image for
	$t=1.2$.}
\label{fig:bumping-1}
\end{figure}

\begin{example}
Consider the same path as in Example \ref{ex:bumping}, only now allow
$t$ to run from $1.2$ down to $0.8$.  As $t$ decreases, the isometric
spheres corresponding to $\rho_t(\gamma^{\pm 2})$ slide towards those
corresponding to $\rho_t(\gamma^{\pm 1})$, as illustrated in Figure
\ref{fig:sliding1}.  At approximately time $t=1$, these isometric
spheres meet visibly, and for $1 > t > 0.8$, these isometric spheres
overlap.  The isometric spheres corresponding to $\rho_t(\gamma^{\pm
  3})$ are visible during these times, and emerge out from under the
intersection between faces corresponding to $\rho_t(\gamma^{\pm 1})$
and $\rho_t(\gamma^{\pm 2})$, as illustrated in Figure
\ref{fig:sliding1}.  Again one may show that these isometric spheres,
as well as a vertical fundamental domain for $\Gamma_\infty$, cut out
a polyhedron which glues up to give our manifold
$\HH^3/\rho_t(\pi_1(C))$, so again by Lemma \ref{lemma:finding-ford},
these isometric spheres cut out the Ford domain for the manifold.

We can show that this is a uniformization of $C$, i.e.\ that
$\HH^3/\rho(\pi_1(C))$ is homeomorphic to the interior of $C$, this
time by considering the face of the Ford spine corresponding to
$\gamma^{\pm 3}$.  This face has boundary consisting of two 1--cells
and an arc on $\D_+C$.  Collapse this face.  In fact, we may collapse
the faces in the order they appeared, and we are again left with a
single 2--cell attached to $\D_+C$ (corresponding to $\gamma^{\pm
  1}$).  So again this is a uniformization of $C$.
\label{ex:sliding}
\end{example}

\begin{figure}
\begin{center}
\begin{tabular}{cccc}
	\makebox{\begin{tabular}{c}
			\includegraphics{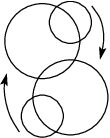}
	\end{tabular}} &
	\makebox{\begin{tabular}{c}
			\includegraphics{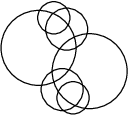}
	\end{tabular}} &
	\hspace{1in} &
	\makebox{\begin{tabular}{c}
			\vspace{-.1in} \\
      \includegraphics[width=1.75in]{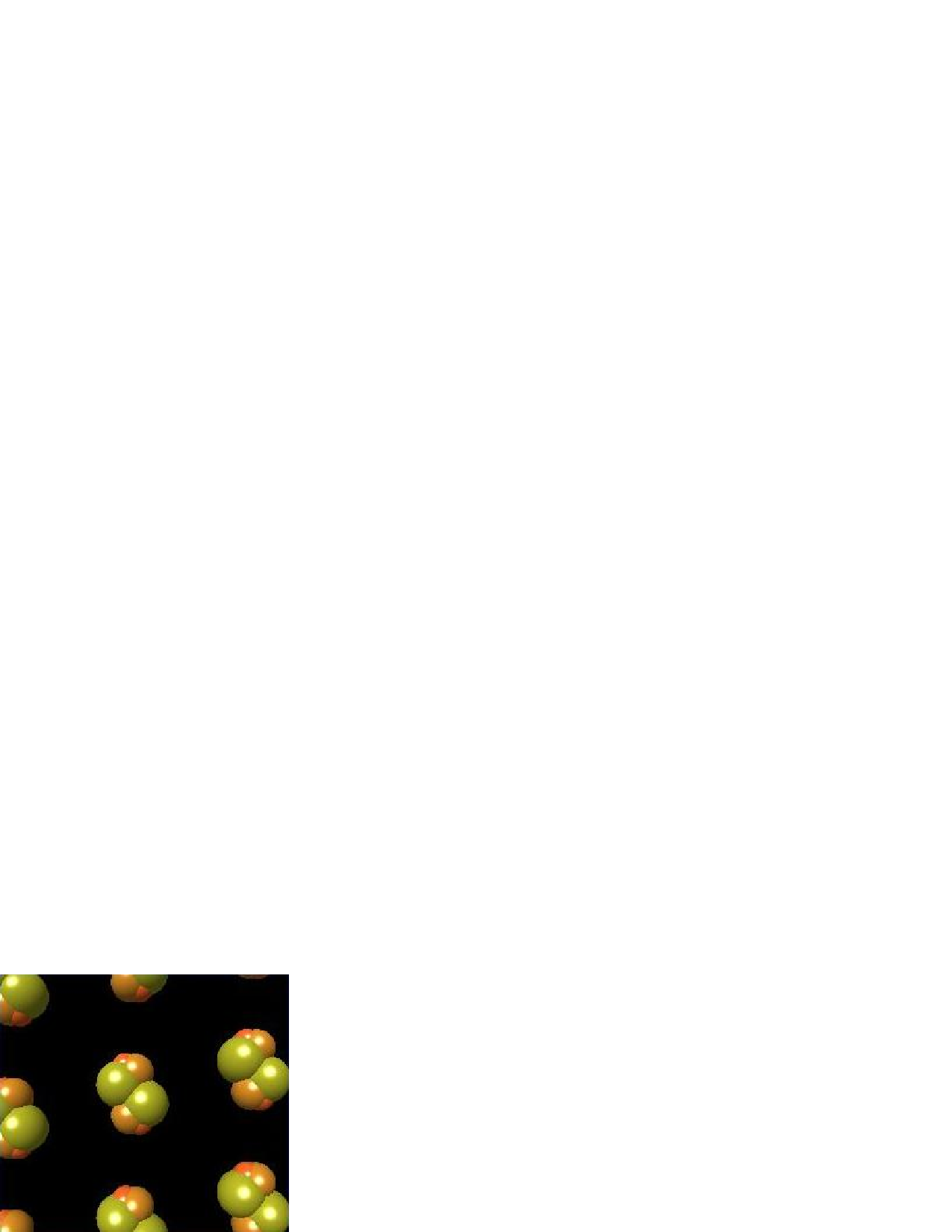}
		\end{tabular}}
\end{tabular}
\end{center}
\caption{Left:  Schematic picture of path for $t=1.2$ down to
	$t=0.8$.  Right: Computer generated image, $t=0.8$.}
\label{fig:sliding1}
\end{figure}

The examples above illustrate the phenomenon of Lemma
\ref{lemma:edge-visible}, that is, that new faces emerge when existing
faces meet in a path of uniformizations.  We will see in
Section \ref{sec:geodesics} that this is the only way a new face can
emerge.

\subsection{The dual structure}

Recall that we are interested in core tunnels of the
$(1;2)$--compression body $C$.  In many cases, we can identify the
core tunnel as an edge of the geometric dual of the Ford spine.  This
dual is reminiscent of the canonical polyhedral decompositions for
finite volume manifolds which were introduced by Epstein and Penner
\cite{epstein-penner}.  We build the dual structure as follows.

Consider again $\F = \HH^3 \setminus \bigcup_{g \in \Gamma\setminus
  \Gamma_\infty} B(g)$.  To each visible isometric sphere $I(g)$ of
$\F$, there is an associated edge $e(g)$, which is the geometric dual
of $I(g)$ running from the center of $I(g)$ to infinity in $\HH^3$.

If two isometric spheres $I({g_1})$ and $I({g_2})$ of $\F$ overlap
visibly, then they correspond to a dual face $F({g_1,g_2})$ which is
the vertical plane bounded by $e(g_1)$ and $e({g_2})$ intersected with
$\F$.

If visible isometric spheres of $\F$ meet (visibly) in a vertex, then
their dual is a 3--dimensional region in $\F$ bounded by dual faces.

This forms a complex $C$.  When we take $C/\Gamma$, we obtain a
complex $C_0$ which is the geometric dual of the Ford spine.

\begin{example}
Consider Example \ref{ex:simple-ford}, which gives a minimally
parabolic geometrically finite uniformization on a
$(1;2)$--compression body with only one face of the Ford spine.  The
geometric dual to the Ford spine for this example is a single edge
running through the geometric center of the Ford spine.  This edge
lifts to a collection of geodesics in $\F \subset \HH^3$ running
through centers of isometric spheres corresponding to $\rho(\gamma)$,
$\rho(\gamma^{-1})$, and their translates under $\Gamma_\infty$.  See
Figure \ref{fig:simple-dual}.
\label{ex:simple-spine}
\end{example}

\begin{figure}
\begin{center}
	\input{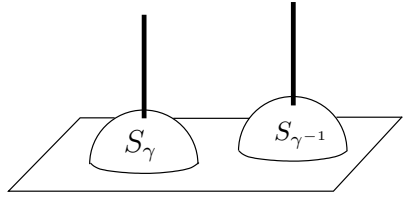}
\end{center}
\caption{The dual to the simplest Ford spine is an edge that lifts to
	a collection of vertical geodesics in $\F$, shown in bold.}
\label{fig:simple-dual}
\end{figure}

\begin{example}
Consider again Example \ref{ex:bumping}, which describes the Ford
domain of a geometrically finite unifomization of $C$ in which the
isometric spheres corresponding to $\rho(\gamma)$ and
$\rho(\gamma^{-1})$ ``bump'', and only isometric spheres corresponding
to $\rho(\gamma^2)$ and $\rho(\gamma^{-2})$ emerge.  Consider the
geometric dual to this picture.  In $\F/\Gamma_\infty$, we see three
intersections of isometric spheres: one corresponding to
$\rho(\gamma)$ and $\rho(\gamma^{-1})$, one corresponding to
$\rho(\gamma^2)$ and $\rho(\gamma)$, and one corresponding to
$\rho(\gamma^{-1})$ and $\rho(\gamma^{-2})$.  Thus the lift of the
geometric dual to $\F$ has the form on the left of Figure
\ref{fig:bump-dual}.

These three lines of intersection in $\F$ are all glued under the
action of $\Gamma$ to the same single line.  The dual faces glue up to
give a single ideal triangle, as on the right in Figure
\ref{fig:bump-dual}, with two sides on the same edge (dual to the
isometric sphere of $\gamma$).
\label{ex:bump-dual}
\end{example}

\begin{figure}
\begin{center}
	\includegraphics{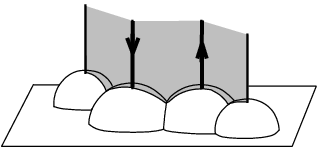}
	\hspace{.5in}
	\includegraphics{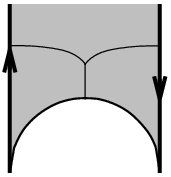}
\end{center}
\caption{Left: the lift to $\F$ of the geometric dual of a Ford spine
	as in Example \ref{ex:bumping}.  Right: in this case the geometric
	dual to the Ford spine is a single ideal triangle, with two sides on
	the same edge.}
\label{fig:bump-dual}
\end{figure}

\begin{example}
When $\Gamma$ is the final uniformization in the path of
representations considered in Example \ref{ex:sliding}, the dual is a
single ideal tetrahedron, as shown in Figure \ref{fig:slide-dual}.
Note the tetrahedron has two faces which are identified to each other
under the action of $\Gamma$.
\label{ex:slide-dual}
\end{example}

\begin{figure}
\begin{center}
	\includegraphics{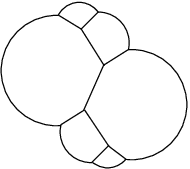}
	\hspace{.5in}
	\includegraphics{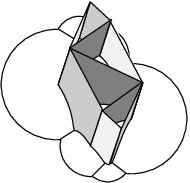}
\end{center}
\caption{On the right is the lift to $\F$ of the geometric dual of the
  Ford spine of Example \ref{ex:sliding}.  The dual structure meets
  the horosphere about infinity in four triangles, corresponding to
  the four vertices of the single ideal tetrahedron.}
\label{fig:slide-dual}
\end{figure}

The dual structure, along with a horoball at infinity, also carries
the topological information of the $(1;2)$--compression body.

\begin{lemma}
For $M$ the interior of any hyperbolizable 3--manifold with a single
torus boundary component, let $\rho\co \pi_1(M) \to PSL(2,\CC)$ be a
minimally parabolic geometrically finite uniformization of $M$.  Then
there is a deformation retraction of $M$ onto the union of the
geometric dual of its Ford spine and an embedded horoball neighborhood
of the rank 2 cusp.
\label{lemma:retract-dual}
\end{lemma}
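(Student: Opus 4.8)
The plan is to build the deformation retraction in the universal cover $\HH^3$ and then push it down to $M=\HH^3/\rho(\pi_1(M))$, running the argument of Lemma~\ref{lemma:spine} ``upside down'': instead of sliding points down the vertical foliation of the equivariant Ford domain onto the Ford spine, we slide them up, toward the cusp and the dual complex. First I would fix the geometry. Normalise so that the point at infinity projects to the rank $2$ cusp with stabiliser $\Gamma_\infty$, and choose a horosphere $H$ bounding a horoball $H_\infty$ that projects to an embedded cusp neighbourhood. By Lemma~\ref{lemma:not-gf} every isometric sphere has Euclidean radius at most the shortest translation length of $\Gamma_\infty$, so after shrinking the cusp neighbourhood we may assume $H$ lies above all isometric spheres; then $H_\infty\subset\F$ and $H:=H_\infty/\Gamma_\infty$ is the horoball neighbourhood of the statement. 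As in the proof of Lemma~\ref{lemma:spine}, $\F\cap\HH^3$ is foliated by the vertical geodesic rays running to infinity; writing $\phi\co\CC\to[0,\infty)$ for the height of the topmost isometric sphere lying over a point of $\CC$ (and $\phi\equiv 0$ on $\overline{\F}\cap\CC$), this foliation identifies $\F\cap\HH^3$ with $\{(z,t):t\ge\phi(z)\}$, $\Gamma_\infty$--equivariantly.

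Next I would locate the dual complex inside this picture. Every cell of the $\Gamma$--invariant dual complex $\widetilde{C}$ (the union of the edges $e(g)$, the faces $F(g_1,g_2)$, and their higher--dimensional analogues) is vertical, with ideal vertices among the point at infinity and the centres $g^{-1}(\infty)$ of visible isometric spheres; by Lemma~\ref{lemma:iso-center-rad} and the definition of $\Gamma_\infty$, each such centre is a parabolic fixed point of the one cusp. Projecting $\widetilde{C}\cap\F$ to $\CC$ gives a $\Gamma_\infty$--invariant, locally finite graph $G$ (the centres are finitely many modulo $\Gamma_\infty$, joined by a segment when the corresponding isometric spheres meet visibly), and the $3$--cells are the vertical chimneys over those complementary faces of $G$ that carry a visible $0$--cell of $\F$. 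Over the union $U\subset\CC$ of $G$ with these chimney bases, the cells of $\widetilde{C}$ fill each vertical fibre of $\F$ from height $\phi$ all the way up; over the remaining complementary faces of $G$ --- which are precisely the shadows of the geometrically finite ends of $M$ --- $\widetilde{C}$ misses $\F$ entirely. Hence $(\overline{\widetilde{C}}\cap\F)\cup H_\infty$ descends to precisely $C_0\cup H$, and this is the set onto which we wish to retract.

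The retraction itself pushes each vertical fibre of $\F$ upward. Over $U$ there is nothing to do, since the fibre already lies in $\overline{\widetilde{C}}\cap\F$. Over a shadow $Q$ of a geometrically finite end, one slides the fibre over $z$ up so that it starts at height $\psi_Q(z)$, where $\psi_Q\co\overline{Q}\to[0,\infty)$ is chosen continuously (and $\Gamma_\infty$--equivariantly, hence determined by finitely many orbit representatives) with $\psi_Q\ge\phi$, with $\psi_Q=\phi$ along $\partial Q$, and with $\psi_Q$ equal to the Euclidean height of $H$ outside a thin collar of $\partial Q$ in $Q$; the map $(z,t)\mapsto(z,\max\{t,\psi_Q(z)\})$ together with the obvious straight--line homotopy to the identity then matches the ``do nothing'' map along $\partial Q$ and carries all but the collar into $H_\infty$, and one last radial push along the fibres clears the remaining collar onto $(\overline{\widetilde{C}}\cap\F)\cup H_\infty$. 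Taking the quotient by $\rho(\pi_1(M))$ then gives a deformation retraction of $M$ onto $C_0\cup H$.

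The step I expect to be the main obstacle is verifying that this homotopy descends to $M$. It is manifestly $\Gamma_\infty$--equivariant, but the vertical foliation is not $\Gamma$--invariant, so one cannot simply demand that the motion commute with the side pairings $I(g)\leftrightarrow I(g^{-1})$ of $\F$; rather, one must check that points of $\partial\F$ that are identified in $M$ stay identified throughout. The cleanest way around this is to realise the retraction as the flow of a complete $\Gamma$--invariant vector field on $\HH^3$ --- for instance one obtained by smoothing the nearest--point projection onto the closed $\Gamma$--invariant set $\widetilde{C}\,\cup\,(\text{horoballs at all parabolic fixed points})$ and arranging it to vanish on the horoballs --- which descends automatically; the vertical--fibre analysis above is then used only to identify the retract as $C_0\cup H$ and to confirm that the retraction is strong (fixes $C_0\cup H$ pointwise). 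One should also record, via the Poincar\'e polyhedron theorem exactly as in the proof of Theorem~\ref{thm:poincare1}, that $M$ really is obtained from $\F$ by these side pairings together with the action of $\Gamma_\infty$, which is what makes the descent meaningful.
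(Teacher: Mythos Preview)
Your outline is close in spirit to the paper's proof --- both use the vertical foliation of $\F$ to push toward the cusp --- but the paper handles the descent to $M$ by a device you are missing, and your proposed workaround does not go through as written.

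The paper does \emph{not} push directly onto $\widetilde{C}\cup H_\infty$.  Instead it first builds, for each visible isometric sphere $I(g)$, a solid Euclidean cone in $\HH^3\setminus H_\infty$ whose base on $\CC$ is the disk bounded by $\partial I(g)$ and whose top on $\partial H_\infty$ is a slight $\epsilon$--enlargement of that disk; the union of these cones is called $S$, and crucially every isometric sphere lies \emph{inside} $S$.  The first step is then the vertical push of $\HH^3\setminus(S\cup H_\infty)$ onto $\partial(S\cup H_\infty)$.  Because every point of $\partial\F$ lies on some isometric sphere and hence in $S$, every such point is \emph{fixed} by this push, so the side pairings $I(g)\leftrightarrow I(g^{-1})$ are trivially respected and the map descends to $M$.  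Only in the second step does one retract the (image in $M$ of the) regular neighbourhood $S$ onto the dual complex $C_0$, which is a standard regular--neighbourhood collapse done directly in the quotient.  Your construction, by contrast, moves points on the isometric spheres during the vertical push (since the shadow $D_g$ of $I(g)$ is generally not contained in your set $U$), and there is no reason for $x\in I(g)$ and $g(x)\in I(g^{-1})$ to land at $\Gamma_\infty$--equivalent heights.

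Your fallback via a $\Gamma$--invariant vector field has its own gap: the complex $\widetilde{C}$ you define --- the union of the vertical cells $e(g)$, $F(g_1,g_2)$, etc.\ --- is $\Gamma_\infty$--invariant but \emph{not} $\Gamma$--invariant.  Applying a loxodromic $\gamma$ to a vertical edge $e(g)$ produces the geodesic from $\gamma g^{-1}(\infty)$ to $\gamma(\infty)\neq\infty$, which is not vertical and not in $\widetilde{C}$.  So the ``closed $\Gamma$--invariant set'' you name is not $\Gamma$--invariant, and the nearest--point projection argument does not get off the ground.  Replacing $\widetilde{C}$ by the full preimage $\pi^{-1}(C_0)$ would restore $\Gamma$--invariance, but that set is far from convex and nearest--point projection onto it is discontinuous, so the flow must be constructed some other way.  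The paper's cone trick avoids all of this.
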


\begin{proof}
Because $\rho$ is geometrically finite, there exist finitely many
visible isometric spheres in a Ford domain, which we view as $\F
\subset \HH^3$ intersected with a vertical fundamental domain.  The
boundaries of these isometric spheres are circles on $\CC$, which
bound disks on $\CC$.  There exists some $\epsilon>0$ such that the
$\epsilon$--neighborhood of the union of these disks on $\CC$ is
embedded in $\CC$.  Translates by $\Gamma_\infty$ remain embedded on
$\CC$.  Now let $H_\infty$ be the lift an embedded horoball
neighborhood of the rank 2 cusp to $\HH^3$.  Project the
$\epsilon$--neighborhood of the union of disks vertically onto
$\partial H_\infty$.  For each visible isometric sphere, there is a
portion of a Euclidean cone in $\HH^3 \setminus H_\infty$ which
intersects $\CC$ in the boundary of the isometric sphere, and
intersects $\partial H_\infty$ in the $\epsilon$--neighborhood.  Let
$S$ denote the union of all these cones.  Note they form a regular
neighborhood of the lift of the geometric dual of the Ford spine,
intersected with $\F \setminus H_\infty$.

For the first step of the deformation retract, consider a point $x$ in
$\HH^3 \setminus (S \cup H_\infty)$.  Hyperbolic space $\HH^3$ is
foliated by vertical lines, and the vertical line through $x$ will
meet $\partial (H_\infty \cup S)$ in exactly one point.  We define a
deformation retract on $\HH^3 \setminus (S\cup H_\infty)$ by taking
$x$ to this unique point on $\partial (H_\infty \cup S)$.  

For the second step, since $S$ is a regular neighborhood of the lift
of the geometric dual of the Ford spine in $\F \setminus H_\infty$, we
deformation retract $S \cup \partial H_\infty$ to the union of the
geometric dual and the boundary $\partial H_\infty$.  We may choose
the deformation retraction to be equivariant with respect to the
action of $\rho(\pi_1(C))$.  Putting both steps together and taking
the quotient under $\rho(\pi_1(C))$, the result is the desired
deformation retraction of $\HH^3/\rho(\pi_1(C))$.
\end{proof}

With this picture of the dual structure, the fact that the core tunnel
is geodesic in the case in which the Ford spine consists of a single
face is immediate.

\begin{prop}
Suppose the Ford spine of a minimally parabolic geometrically finite
hyperbolic uniformization of a $(1;2)$--compression body consists of a
single face, corresponding to the loxodromic generator.  Then the core
tunnel is isotopic to a geodesic, dual to this single face.
\label{prop:simple-ford}
\end{prop}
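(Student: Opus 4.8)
The plan is to leverage the deformation retraction established in Lemma~\ref{lemma:retract-dual}, together with the very simple form of the Ford spine in this case, to identify the core tunnel directly. First I would set $\Gamma = \rho(\pi_1(C))$ and normalize so that the point at infinity projects to the rank $2$ cusp, with $\Gamma_\infty$ its stabilizer. By hypothesis the Ford spine has a single $2$--cell, corresponding to the loxodromic generator $\gamma$; by Lemma~\ref{lemma:g-ginv} this face comes from the pair of visible isometric spheres $I(\rho(\gamma))$ and $I(\rho(\gamma^{-1}))$, and since there are no other visible faces, these two isometric spheres (and their $\Gamma_\infty$--translates) are disjoint, exactly as in Example~\ref{ex:simple-ford}. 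Hence the equivariant Ford domain $\F$ has no visible edges, and the geometric dual of the Ford spine is a \emph{single edge} $e$ running from the center of $I(\rho(\gamma))$ up to infinity and then (after the $\Gamma$--identification) back down to the center of $I(\rho(\gamma^{-1}))$; this edge is a geodesic arc with both endpoints on the rank $2$ cusp, as in Example~\ref{ex:simple-spine} and Figure~\ref{fig:simple-dual}.

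Next I would invoke Lemma~\ref{lemma:retract-dual}: $M = \HH^3/\Gamma$ deformation retracts onto the union of the geometric dual $C_0$ of the Ford spine and an embedded horoball neighborhood $N$ of the rank $2$ cusp. In the present case $C_0$ is just the single geodesic edge $e$ described above, so $M$ deformation retracts onto $N \cup e$, i.e.\ onto a torus $\partial N$ with a single arc attached along its two endpoints. But this is precisely the defining model of the $(1;2)$--compression body: $C$ is a regular neighborhood of $\partial_- C \cup \tau$, where $\tau$ is the core tunnel, so $C$ deformation retracts onto a torus with a single arc attached. Thus both $\tau \subset C$ and $e \subset M$ are ``the spanning arc'' in this torus-plus-one-handle picture.

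The remaining point — and the step I expect to be the main obstacle — is to upgrade this homotopy-level coincidence to an honest \emph{isotopy} between $\tau$ and the geodesic $e$. The issue is that an arc with endpoints on a torus boundary component is determined up to isotopy not merely by its homotopy class rel the boundary, but by its homotopy class together with the behavior of its endpoints; and in a compression body the mapping class group of the positive boundary can permute isotopy classes of spanning arcs. Here the key fact is rigidity: in the $(1;2)$--compression body, up to the action of $\Gamma_\infty$ (equivalently, up to isotopy of the torus boundary) there is a \emph{unique} loxodromic generator of $\pi_1(C)$ not conjugate into $\Gamma_\infty$, and hence a unique isotopy class of properly embedded arc $\tau$ with $C = N(\partial_- C \cup \tau)$ realizing it — this is the same uniqueness remarked on at the end of Section~\ref{sec:algorithm}. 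Since $e$ is a properly embedded arc with $M \simeq \HH^3/\Gamma$ a regular neighborhood of $(\text{cusp torus}) \cup e$, and since $e$ carries the conjugacy class of $\gamma$ (its associated holonomy, being dual to $I(\rho(\gamma))$, is $\rho(\gamma)$ up to $\Gamma_\infty$), $e$ realizes this same unique isotopy class. Therefore there is a homeomorphism of $C$ carrying $\tau$ to (the arc isotopic to) $e$, and, $C$ being irreducible with incompressible torus boundary, this homeomorphism is isotopic to one fixing the identification with $M$; hence $\tau$ is isotopic to the geodesic $e$, dual to the single face of the Ford spine, as claimed.
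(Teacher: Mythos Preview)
Your proposal is correct and follows essentially the same route as the paper: identify the dual of the single-face Ford spine as a single geodesic edge (as in Example~\ref{ex:simple-spine}), then invoke Lemma~\ref{lemma:retract-dual} to retract $\HH^3/\Gamma$ onto the union of a horoball neighborhood of the cusp and that edge. The paper's proof stops there, treating the identification of this edge with the core tunnel as immediate; your final paragraph, where you worry about upgrading homotopy to isotopy via uniqueness of the spanning arc in the $(1;2)$--compression body, is more careful than the paper but also more roundabout than necessary --- once the retraction exhibits $M$ as a regular neighborhood of (cusp torus) $\cup\, e$, the arc $e$ is a core tunnel by definition, and uniqueness of the core tunnel in the $(1;2)$--compression body (up to isotopy) finishes it directly, without needing the homeomorphism/mapping-class argument you sketch at the end.
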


\begin{proof}
Let $\rho\co\pi_1(C) \to PSL(2,\CC)$ be a uniformization of $C$ with
one face of the Ford spine, as in the statement of the proposition,
and denote $\rho(\pi_1(C))$ by $\Gamma$.  As in Example
\ref{ex:simple-spine}, the dual structure to the Ford spine consists
of a single edge.

By Lemma \ref{lemma:retract-dual}, we may retract $\HH^3/\Gamma$ onto
a union of a horoball neighborhood of the cusp and this geodesic.
Thus in this case, the single geodesic, which is the edge dual to the
single face of the Ford spine, is isotopic to the core tunnel.
\end{proof}

In fact, for any uniformization $\rho\co \pi_1(C) \to PSL(2,\CC)$, the
core tunnel will always be \emph{homotopic} to the edge dual to the
isometric sphere corresponding to $\rho(\gamma)$.

\begin{lemma}
For any uniformization $\rho\co \pi_1(C) \to PSL(2,\CC)$, the core
tunnel will be homotopic to the edge dual to the isometric sphere
corresponding to the loxodromic generator of $\rho(\pi_1(C))$.
\label{lemma:homotopic-core}
\end{lemma}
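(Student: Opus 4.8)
The plan is to work entirely in the universal cover $\HH^3$ with the rank 2 cusp at infinity, and to exhibit an explicit homotopy between a chosen lift of the core tunnel $\tau$ and a chosen lift of the dual edge $e(\rho(\gamma))$. First I would recall the topological description of $C$ from Section~\ref{sec:prelim}: $C$ deformation retracts onto $\partial_-C \cup \tau$, and $\pi_1(C) \cong (\ZZ\times\ZZ)\ast\ZZ$ where $\alpha,\beta$ generate the torus factor and $\gamma$ is the generator coming from the $1$--handle. The core tunnel $\tau$ is precisely an arc whose homotopy class, relative to the torus boundary (with endpoints allowed to move on $\partial_-C$), represents the generator $\gamma$: going through the $1$--handle once is exactly what the loxodromic generator records. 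This is the key bookkeeping observation, and it is essentially immediate from the construction of $C$ in the excerpt (torus $\times\,[0,1]$ with one $1$--handle attached, $\tau$ = core of the handle plus two vertical arcs).

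Next I would describe the dual edge. By the definition of the dual structure, $e(\rho(\gamma))$ is the vertical geodesic in $\HH^3$ running from the center $\rho(\gamma)(\infty) = a/c$ of the isometric sphere $I(\rho(\gamma))$ up to the point at infinity (using Lemma~\ref{lemma:iso-center-rad} for the center). Under the covering projection $\HH^3 \to \HH^3/\rho(\pi_1(C))$, both endpoints of this geodesic project into (a neighborhood of) the rank 2 cusp: the $\infty$ end obviously, and the $a/c$ end because $a/c = \rho(\gamma)(\infty)$ is the image under $\rho(\gamma)$ of the parabolic fixed point, hence lies in the $\rho(\gamma)\Gamma_\infty\rho(\gamma^{-1})$--orbit of cusp-related data; concretely, $e(\rho(\gamma))$ descends to an arc in $M$ with both endpoints on the cusp torus. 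I would then compute its homotopy class: lift so that one endpoint sits over the parabolic fixed point $\infty$ and the other over $\rho(\gamma)^{-1}(\infty) = 0$ (equivalently, translate $e(\rho(\gamma))$ by $\rho(\gamma^{-1})$ so it runs from $0$ to $\rho(\gamma^{-1})(\infty)$); the deck transformation carrying the "incoming" lift of the cusp to the "outgoing" lift across this arc is exactly $\rho(\gamma)$, so the descended arc represents $\gamma \in \pi_1(C)$ as a class of arcs rel the cusp torus.

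Having matched both classes, the conclusion follows from the elementary fact that in a $K(\pi,1)$ — which $M = \HH^3/\rho(\pi_1(C))$ is, being aspherical — two properly embedded arcs with endpoints on the same boundary torus component are homotopic (as arcs, with endpoints free to move on that torus) if and only if they represent the same element of $\pi_1$ up to the ambiguity of pre- and post-composing by loops in the torus, i.e.\ up to multiplication by elements of $\Gamma_\infty$ on either side. Both $\tau$ and $e(\rho(\gamma))$ represent $\gamma$ up to exactly this ambiguity, so they are homotopic. I expect the main obstacle to be the careful identification of the homotopy class of $\tau$ with that of $e(\rho(\gamma))$ with matched lifts — keeping precise track of which horoball lift corresponds to which coset $\Gamma_\infty$, $\gamma\Gamma_\infty$, etc., and verifying that the two base-point/endpoint conventions agree — rather than any analytic difficulty; once the combinatorics of cosets is pinned down, the homotopy itself is formal from asphericity of $M$.
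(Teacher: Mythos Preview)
Your proposal is correct and follows essentially the same approach as the paper: both arguments slide the tunnel to a loop representing $\gamma$, lift to $\HH^3$ so the arc runs between the horoball at $\infty$ and its $\rho(\gamma)$--translate, and then identify the straightened geodesic with the dual edge. One small bookkeeping slip worth fixing (exactly the kind you anticipated): in this paper's conventions the center of $I(\rho(\gamma))$ is $\rho(\gamma)^{-1}(\infty)$, not $\rho(\gamma)(\infty)$ (see Lemma~\ref{lemma:iso-center-rad}), so your dual edge $e(\rho(\gamma))$ runs from $\rho(\gamma)^{-1}(\infty)$ to $\infty$; this does not affect the argument since $e(\rho(\gamma))$ and $e(\rho(\gamma^{-1}))$ project to the same arc in the quotient.
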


\begin{proof}
Denote the loxodromic generator by $\rho(\gamma)$.  Consider the core
tunnel in the compression body $\HH^3/\rho(\pi_1(C))$.  Take a
horoball neighborhood $H_\infty$ of the cusp.  The core tunnel runs
through the horospherical torus $\D H_\infty$ into the cusp.  Denote
by $\widetilde{H}_\infty$ a lift of $H_\infty$ to $\HH^3$ about the
point at infinity in $\HH^3$.

There is a homeomorphism from $C\setminus \bdy_+ C$ to
$(\HH^3/\rho(\pi_1(C))) \setminus \mathring{H}_\infty$.  Slide the
tunnel in $C$ so that it starts and ends at the same point, and so
that the resulting loop represents $\gamma$.  The image of this loop
under the homeomorphism to $(\HH^3/\rho(\pi_1(C))) \setminus
\mathring{H}_\infty$ is some loop.  It lifts to an arc in $\HH^3$
starting on $\widetilde{H}_\infty$ and ending on
$\rho(\gamma)(\widetilde{H}_\infty)$.  Extend to an arc in $\HH^3 /
\rho(\pi_1(C))$ by attaching a geodesic in $\widetilde{H}_\infty$ and
in $\rho(\gamma)(\widetilde{H}_\infty)$ and projecting.  This is
isotopic to (the interior of) the core tunnel.  Now homotope the arc
to a geodesic.  It will run through the isometric sphere corresponding
to $\rho(\gamma^{-1})$ once.
\end{proof}

\section{Paths of structures and tunnels}\label{sec:geodesics}
We have encountered examples of minimally parabolic geometrically
finite uniformizations of a $(1;2)$--compression body $C$ for which
the core tunnel is geodesic.  This was shown explicitly for structures
with simple Ford spines in Proposition \ref{prop:simple-ford}.  It can
also be seen for those with spines as in Examples \ref{ex:bumping} and
\ref{ex:sliding}, by constructing a deformation retract onto the
geodesic dual to the face corresponding to $\gamma$.

In this section we investigate Conjecture \ref{conj:core-tunnel} more
carefully.  We find families of geometrically finite uniformizations
of $C$ for which the core tunnel is geodesic.  Those structures of
Examples \ref{ex:bumping} and \ref{ex:sliding} will fit into these
families.

Our method of proof is to consider paths through the space of
minimally parabolic geometrically finite uniformizations, and the
corresponding Ford spines and their dual structures.  We will see that
in many cases, under some assumptions on the path, the core tunnel
must remain isotopic to a geodesic.

\subsection{Paths and visible isometric spheres}

In this subsection we will work with slightly more general manifolds
than $C$.  We let $M$ be the interior of a hyperbolic manifold with
only one of its boundary components a torus.  

The following follows from work of Bers, Kra, and Maskit (see
\cite{bers74}).

\begin{lemma}
The space of minimally parabolic geometrically finite uniformizations
of $M$ is path connected.
\label{lemma:path-conn}
\end{lemma}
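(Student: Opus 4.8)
The plan is to realize the space of minimally parabolic geometrically finite uniformizations of $M$ as an open subset of the character variety (or deformation space) of $\pi_1(M)$, and to identify it with a quasiconformal deformation space whose connectedness is a standard consequence of the Bers--Kra--Maskit theory of Kleinian groups. Concretely, let $\mathrm{GF}_0(M)$ denote the set of (conjugacy classes of) minimally parabolic geometrically finite uniformizations. First I would recall that, by the Ahlfors--Bers quasiconformal deformation theory, the set of geometrically finite representations with a fixed parabolic locus forms a single quasiconformal deformation space, parametrized by the Teichm\"uller space of the conformal boundary $\Omega/\Gamma$; this Teichm\"uller space is a cell, hence connected. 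So each ``stratum'' with a fixed parabolic locus is connected.

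The next step is to handle the ``minimally parabolic'' condition: among all geometrically finite structures on $M$, the minimally parabolic ones are exactly those whose only parabolics come from the torus cusp of $M$, i.e.\ the stratum with the smallest possible parabolic locus. I would argue that this stratum is connected \emph{and dense} in the full geometrically finite deformation space, using that accidental parabolics arise only on the boundary of the quasiconformal deformation space (one cannot create a new parabolic by a quasiconformal deformation — creating a parabolic requires pinching, which lands on the boundary of Teichm\"uller/quasiconformal deformation space). Thus $\mathrm{GF}_0(M)$ is itself an open quasiconformal deformation space, homeomorphic to the Teichm\"uller space of a genus-two surface (the non-toroidal boundary $\partial_+ C$), which is a cell and in particular path connected. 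Formally this is the content of the relevant theorems in \cite{bers74} together with the Ahlfors finiteness theorem and the Bers density/deformation results; I would cite these rather than reprove them.

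The main obstacle I anticipate is a clean statement: one must be careful that ``geometrically finite minimally parabolic uniformization of $M$'' is precisely the set of discrete faithful $\rho$ with $\HH^3/\rho(\pi_1 M)$ homeomorphic to $\mathrm{int}(M)$, no rank-one parabolics, and geometrically finite — and that this coincides with a single Bers-type deformation space rather than a union of components indexed by, say, different markings or different homeomorphism types. For $M$ the $(1;2)$-compression body (or more generally an interior of a manifold with one torus boundary and handlebody-like remaining structure) there is essentially one topological model, so there is a single component; this is where the hypothesis that only one boundary component is a torus is used, to pin down the parabolic locus. I would therefore structure the write-up as: (i) cite Ahlfors--Bers that a fixed-parabolic-locus geometrically finite deformation space is a Teichm\"uller space, hence connected; (ii) observe the minimally parabolic locus is the top-dimensional stratum and is itself such a deformation space; (iii) conclude path connectedness. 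No delicate estimate is needed — the work is entirely in correctly invoking \cite{bers74}.
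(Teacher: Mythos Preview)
Your proposal is correct and takes essentially the same approach as the paper: both invoke the Bers--Kra--Maskit parametrization to identify the space of minimally parabolic geometrically finite uniformizations with (a quotient of) the Teichm\"uller space of the higher--genus boundary, and conclude path connectedness from the fact that Teichm\"uller space is a cell. The paper's version is terser---it simply cites \cite{bers74} and notes the identification is with Teichm\"uller space modulo ${\rm Mod}_0(M)$---so your stratification discussion and density remarks are unnecessary, though not incorrect.
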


\begin{proof}
Bers, Kra, and Maskit showed that the space of conjugacy classes of
minimally parabolic geometrically finite uniformizations may be
identified with the Teichm\"uller space of the higher genus boundary
components, quotiented out by ${\rm Mod}_0(M)$, the group of isotopy
classes of homeomorphisms of $M$ which are homotopic to the identity.
Since the Teichm\"uller space is path connected, the quotient will
also be path connected.
\end{proof}

Thus given any minimally parabolic geometrically finite uniformization
of $C$, it is connected by a path of uniformizations to a
uniformization admitting a simple Ford spine, as in
Lemma \ref{lemma:simple-ford}.

Now, we will be taking paths through the interior of the space of
geometrically finite, minimally parabolic uniformizations of the
manifold $M$.  Technically, such uniformizations are paths of
representations $\rho_t\co \pi_1(M) \to PSL(2,\CC)$.  For any group
element $g \in \pi_1(M)$, $\rho_t$ will give a path of isometric
spheres corresponding to $\rho_t(g)$.

As the isometric spheres in a Ford domain bump into each other, new
isometric spheres become visible, and in turn visible faces may become
invisible.  We will determine when and how spheres become visible.
First, we show that isometric spheres are visible for an open set of
time.

\begin{lemma}
Let $\Gamma$ be a group with subgroup $\Gamma_\infty \cong \ZZ\times
\ZZ$, and let $\rho_t\co \Gamma \to \PSL(2,\CC)$ be a continuous path of
minimally parabolic geometrically finite representations of $\Gamma$
such that $\rho_t(\Gamma_\infty)$ fixes the point at infinity in
$\HH^3$ for all $t$.  Then any isometric sphere will be visible for an
open set of time.
\label{lemma:open}
\end{lemma}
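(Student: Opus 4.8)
The plan is to fix $g \in \Gamma\setminus\Gamma_\infty$ and show that $V_g = \{\,t : I(\rho_t(g))\text{ is visible}\,\}$ is open; the lemma is then immediate. I would work throughout with the characterization of visibility from Lemma \ref{lemma:visible-strict-inequality}: $I(\rho_t(g))$ is visible if and only if it is not contained in $\bigcup_h \overline{B(\rho_t(h))}$, where $h$ ranges over $\Gamma\setminus(\Gamma_\infty\cup\Gamma_\infty g)$. So, given $t_0 \in V_g$, I would first choose a witness point $x_0 \in I(\rho_{t_0}(g))$ with $x_0 \notin \overline{B(\rho_{t_0}(h))}$ for every such $h$, and then argue that a point of $I(\rho_t(g))$ close to $x_0$ remains a witness for all $t$ near $t_0$. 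Here I would fix a conjugation depending continuously on $t$ on a small interval $J \ni t_0$, so that all the isometric spheres $I(\rho_t(h))$ and an embedded horoball neighbourhood $\widetilde{H}$ of the rank $2$ cusp move continuously.

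The geometric engine is the Shimizu--Leutbecher bound of Lemma \ref{lemma:not-gf}: in these coordinates every isometric sphere of $\rho_t$ has Euclidean radius at most the minimal translation length of the nontrivial elements of $\rho_t(\Gamma_\infty)$, a quantity that --- together with the size of $\widetilde{H}$ --- is bounded above and away from $0$ uniformly for $t \in J$ after shrinking $J$. From this I would extract two facts. First, at $t_0$ only finitely many of the half-balls $\overline{B(\rho_{t_0}(h))}$ meet a fixed small ball $B(x_0,\e_0)$ with $\e_0$ small relative to the height of $x_0$: such a half-ball must have radius bounded below (its boundary sphere, or the ball itself, comes within $\e_0$ of the positive height of $x_0$), so the associated horoball $\rho_{t_0}(h)^{-1}\widetilde{H}$ has Euclidean diameter bounded below and base point in a bounded disk of $\CC$; since these horoballs are pairwise disjoint, one for each coset $\Gamma_\infty h$, only finitely many qualify. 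Hence $\bigcup_h\overline{B(\rho_{t_0}(h))}$ is closed near $x_0$, and after shrinking $\e_0$ I may assume $B(x_0,2\e_0)$ is disjoint from it. Second, this same disjoint-ball count is controlled by a purely geometric bound independent of $t \in J$, so no additional half-ball can intrude into $B(x_0,\e_0)$ for $t \in J$ beyond the finitely many whose horoballs are already large and near $x_0$ at $t_0$. For those finitely many cosets $\Gamma_\infty h$, together with $\Gamma_\infty g$, the spheres $I(\rho_t(h))$ vary continuously, so after shrinking $J$ each stays at distance $\ge \e_0$ from $x_0$. Finally I would pick $x_t \in I(\rho_t(g))$ with $x_t \to x_0$ (possible since $I(\rho_t(g))$ varies continuously); then $x_t$ lies outside $\overline{B(\rho_t(h))}$ for all $h \in \Gamma\setminus(\Gamma_\infty\cup\Gamma_\infty g)$, so Lemma \ref{lemma:visible-strict-inequality} shows $I(\rho_t(g))$ is visible, completing the argument.

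The hard part will be the uniformity in the second fact above: a priori, as $t$ moves a steady stream of new cosets might each push a large isometric sphere close to $x_0$. The resolution is that at any single $t$ the number of pairwise disjoint horoballs of diameter bounded below that fit inside a fixed bounded region of $\HH^3$ is bounded by a geometric quantity independent of $t$; combining this with continuity of $t \mapsto I(\rho_t(h))$ for each fixed coset and with closedness of the half-ball union at $t_0$, one rules out such behaviour on a small enough interval. I expect this local-finiteness bookkeeping, rather than any individual estimate, to be the technical crux; everything else is a routine continuity argument resting on Lemmas \ref{lemma:not-gf} and \ref{lemma:visible-strict-inequality}.
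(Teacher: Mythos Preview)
Your setup matches the paper's: use Lemma~\ref{lemma:visible-strict-inequality} to pick a witness point $x_0$ at time $t_0$, then argue that it persists on a short time interval. The horoball--packing count you extract from Lemma~\ref{lemma:not-gf} is also correct and yields, at each fixed $t$, a uniform finite bound $N$ on the number of cosets $\Gamma_\infty h$ whose isometric spheres can approach $x_0$.

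The gap is exactly where you flag it. A uniform count $N$ \emph{at each time} does not by itself rule out the ``steady stream'' scenario: a sequence of pairwise distinct cosets $\Gamma_\infty h_n$ and times $t_n\to t_0$ with $\overline{B(\rho_{t_n}(h_n))}$ meeting $B(x_0,\e_0)$. Since the $h_n$ may be arbitrarily long words in the generators, the map $t\mapsto \rho_t(h_n)$ has no equicontinuity in $n$, so ``continuity for each fixed coset'' gives nothing uniform, and the identity of the $\leq N$ nearby cosets can turn over completely as $t$ moves. The three ingredients you propose to combine --- the packing bound, pointwise continuity, and closedness of the union at $t_0$ --- do not assemble into the needed conclusion without a further idea.

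The paper supplies that further idea with a non-elementary input. One normalises $\rho_{t_n}(h_n)$ by an element of $\Gamma_\infty$ so that it lies in a compact subset of $\PSL(2,\CC)$ (this is essentially your observation that centres and radii are bounded), passes to a subsequence with $\rho_{t_n}(h_n)\to A$, and then uses that algebraic convergence of $\rho_{t_n}(\Gamma)$ to the geometrically finite group $\rho_{t_0}(\Gamma)$ forces \emph{geometric} convergence (the paper cites Brock--Souto). Hence $A\in\rho_{t_0}(\Gamma)$, say $A=\rho_{t_0}(g)$; then $\rho_{t_n}(g\,h_n^{-1})$ tends to the identity, contradicting discreteness of $\rho_{t_0}(\Gamma)$. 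Your packing argument is in spirit an attempt to replace this geometric--convergence step, but as written it does not do so; to make your outline into a proof you would need either this convergence theorem or some equivalent strong--convergence statement for geometrically finite representations.
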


\begin{proof}
Suppose the isometric sphere corresponding to the element $g_0 \in
\Gamma$ is visible at time $t_0$.  By Lemma
\ref{lemma:visible-strict-inequality}, there exists $x$ on the
hemisphere $I({\rho_{t_0}(g_0)})$ which is not contained in the closure
of half--spaces $B({\rho_{t_0}(h)})$ bounded by any isometric spheres
corresponding to elements of $\Gamma \setminus (\Gamma_\infty \cup
\Gamma_\infty g_0)$.  Let $U$ be a small open ball around $x$ which is
disjoint from the closures of these half spaces.  

We claim that there is some $\epsilon > 0$ such that for any $t \in
(t_0 - \epsilon, t_0 + \epsilon)$, $B({{\rho}_{t}(h)}) \cap U =
\emptyset$ for all $h \in \Gamma \setminus (\Gamma_\infty \cup
\Gamma_\infty g_0)$.  We may also choose $\epsilon > 0$ so that
$I({\rho_t(h)}) \cap U \not= \emptyset$ for all $t \in (t_0 - \epsilon,
t_0 + \epsilon)$.  Hence this claim will prove the lemma, because then
points in this intersection will be visible.

Suppose that the claim is not true. There is then a sequence of times
$t_n$ (where $n \geq 1$) tending to $t_0$ and a sequence of elements
$g_n \in \Gamma \backslash (\Gamma_\infty \cup \Gamma_\infty g_0)$
such that $B({{\rho}_{t_n}(g_n)}) \cap U \not= \emptyset$. So
$\rho_{t_n}(g_n)$ lies in the subset $V$ of ${\rm PSL}(2, {\mathbb C})$
defined as follows:
$$
V = \{ g \in {\rm PSL}(2, {\mathbb C})
: \overline{B(g)} \cap \overline{U} \not = \emptyset \hbox{ and }
g^{-1}(H) \cap H = \emptyset \},
$$
where as usual, $H$ denotes an embedded horoball about infinity.

We wish to argue by compactness.  Note that $V$ itself is not compact,
for if $g \in V$, then so is $wg$ for any $w \in \Gamma_\infty$.
However, we may consider a compact subset of $V$.  Let $V_{\rm norm}$
consist of $wg \in \PSL(2, \CC)$ where $g \in V$ and
$w \in \Gamma_\infty$ is chosen such that $I(wg)$ and $I((wg)^{-1})$
have minimal (Euclidean) distance.  That is, for any other $x \in
\Gamma_\infty$, the distance between $I(xg)$ and $I((xg)^{-1})$ is at
least as large as that between $I(wg)$ and $I(wg)^{-1})$. 

Now $V_{\rm norm}$ is a compact subset of ${\rm PSL}(2, {\mathbb C})$.
By composing with a suitable element of $\Gamma_\infty$, we may assume
that each $\rho_{t_n}(g_n)$ lies in $V_{\rm norm}$.  Hence we may
pass to a subsequence where $\rho_{t_n}(g_n)$ converges to some $h \in
{\rm PSL}(2, {\mathbb C})$.  Now the groups $\rho_{t_n}(\Gamma)$
converge algebraically to $\rho_{t_0}(\Gamma)$.  Since
$\rho_{t_0}(\Gamma)$ is geometrically finite, this convergence is also
geometric \cite{brock-souto}.

So $h$ lies in $\rho_{t_0}(\Gamma)$. Say that $h = \rho_{t_0}(g)$ for
some $g \in \Gamma$. Then $\rho_{t_n}(g g_n^{-1})$ is an element of
$\rho_{t_n}(\Gamma)$ that can be made arbitrarily close to the
identity in ${\rm PSL}(2, {\mathbb C})$ by taking large $n$. Powers of
this form a cyclic subgroup of ${\rm PSL}(2, {\mathbb C})$, and after
passing to a subsequence, these converge geometrically to a
non-discrete subgroup of ${\rm PSL}(2, {\mathbb C})$.  But this implies
that $\rho_{t_0}(\Gamma)$ is not discrete, which is a contradiction.
This proves the claim and hence the lemma.
\end{proof}

In what follows, we will analyze how the pattern of visible isometric
spheres changes along a path $\rho_t(\Gamma)$ of minimally parabolic
geometrically finite uniformizations.  The first step is to examine
how two Euclidean hemispheres $I({\rho_{t}(g_1)})$ and
$I({\rho_t(g_2)})$ interact.  It would be useful to know that during
an interval $[t_-, t_+]$ of time, the set of times where
$I({\rho_{t}(g_1)})$ completely covers $I({\rho_t(g_2)})$ is a finite
collection of closed intervals.  However, this need not be the case in
general.  Although the set of times where $I({\rho_t(g_1)})$ covers
$I({\rho_t(g_2)})$ is a closed subset of $[t_-, t_+]$, this subset can
have infinitely many components.  To visualise this, imagine a
continuous function $[t_-, t_+] \rightarrow {\RR}$ which fluctuates
between positive and negative values infinitely often near some
$t_0 \in [t_-,t_+]$. We may find a path of uniformizations where the
distance of $I({\rho_{t}(g_1)})$ below (or above) $I({\rho_t(g_2)})$
is equal to this function.  Even if we require our path $\rho_t$ of
representations to be smooth, this phenomenon can occur.  However, it
is does not arise when the path of representations $[t_-, t_+] \times
\Gamma \rightarrow {\rm PSL}(2, {\mathbb C})$ is real analytic. Note that
${\rm PSL}(2, {\mathbb C})$ inherits an obvious real analytic structure
from ${\mathbb C}^4$.  Moreover, any path of minimally parabolic
geometrically finite uniformizations can be approximated by a real
analytic path, by the Whitney Approximation Theorem.

\begin{lemma}\label{lemma:real-analytic}
Let $\Gamma$ be a group with a subgroup $\Gamma_\infty \cong {\mathbb Z}
\times {\mathbb Z}$.  Let $\rho_t$ be a real analytic path of
uniformizations of $\Gamma$, where $t \in [t_-, t_+]$, such that
$\rho_t(\Gamma_\infty)$ fixes the point at infinity in $\HH^3$ for all
$t$.  Let $g_1$ and $g_2$ be elements of $\Gamma \setminus
\Gamma_\infty$.  Then, the set of times $t$ where $I({\rho_t(g_1)})$
covers $I({\rho_t(g_2)})$ is a finite collection of closed intervals
and points in $[t_-, t_+]$.
\end{lemma}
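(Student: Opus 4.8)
The plan is to reduce the covering condition to the non-negativity of a single real analytic function of $t$, and then invoke the fact that the zero set of a non-zero real analytic function on a compact interval is finite. First I would fix coordinates: since $\rho_t(\Gamma_\infty)$ fixes $\infty$, by Lemma \ref{lemma:iso-center-rad} each isometric sphere $I(\rho_t(g_i))$ is a Euclidean hemisphere with center $z_i(t) \in \CC$ and radius $r_i(t) = 1/|c_i(t)|$, where $c_i(t)$ is the lower-left entry of the matrix $\rho_t(g_i)$; both $z_i(t)$ and $r_i(t)^2 = 1/|c_i(t)|^2$ are real analytic in $t$ wherever $c_i(t) \neq 0$ (and $c_i(t) \neq 0$ throughout, since $g_i \notin \Gamma_\infty$ and $\rho_t$ is a discrete faithful representation for each $t$, so $\rho_t(g_i)$ does not fix $\infty$). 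Next I would record the elementary geometric fact that the hemisphere $I_1$ of center $z_1$, radius $r_1$ \emph{covers} the hemisphere $I_2$ of center $z_2$, radius $r_2$ if and only if $r_1 \geq r_2$ and the larger disk contains the smaller one, i.e.\ $|z_1 - z_2| \leq r_1 - r_2$; squaring, this is equivalent to the pair of conditions $r_1^2 \geq r_2^2$ and $|z_1 - z_2|^2 \leq (r_1 - r_2)^2$. The second inequality is not yet polynomial in the analytic data because of the cross term $2 r_1 r_2$, but $r_1^2 r_2^2 = 1/(|c_1|^2 |c_2|^2)$ is real analytic, so $r_1 r_2 = \sqrt{r_1^2 r_2^2}$ is real analytic and non-vanishing, and hence
\[
\Phi(t) \;=\; (r_1(t) - r_2(t))^2 - |z_1(t) - z_2(t)|^2 \;=\; r_1(t)^2 + r_2(t)^2 - 2 r_1(t) r_2(t) - |z_1(t) - z_2(t)|^2
\]
is a real analytic function on $[t_-, t_+]$.

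With this in hand, the set of times where $I(\rho_t(g_1))$ covers $I(\rho_t(g_2))$ is exactly the set where $\Phi(t) \geq 0$ \emph{and} $r_1(t)^2 - r_2(t)^2 \geq 0$; call the latter analytic function $\Psi(t)$. The set $\{\Phi \geq 0\} \cap \{\Psi \geq 0\}$ is closed. If either $\Phi$ or $\Psi$ is identically zero on some subinterval, then by the identity theorem for real analytic functions it is identically zero on all of $[t_-, t_+]$ (the interval is connected), and the covering set is then $\{ \Psi \geq 0\}$ or $\{\Phi \geq 0\}$ respectively — each the non-negativity set of a single non-zero or identically-zero real analytic function; iterating this observation, after removing the finitely many (at most two) functions that vanish identically, we are left with genuinely non-zero analytic functions whose zero sets are finite subsets of $[t_-, t_+]$. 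Away from those finitely many zeros, each of $\Phi$ and $\Psi$ has a locally constant sign, so $[t_-,t_+]$ decomposes into finitely many open intervals on each of which the covering condition is either satisfied throughout or fails throughout, together with finitely many boundary points; taking the closure, the covering set is a finite union of closed intervals and isolated points.

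The one place requiring care — the main obstacle — is the real analyticity of $r_1(t) r_2(t)$ and of $|z_1(t)-z_2(t)|^2$: the radii and the modulus involve square roots and absolute values, which are not analytic at zero. The point is that $r_1 r_2$ never vanishes, so its square $r_1^2 r_2^2$ is a nowhere-zero real analytic function and its positive square root is again real analytic; and $|z_1 - z_2|^2 = (z_1 - z_2)\overline{(z_1 - z_2)}$ is a polynomial in the real and imaginary parts of $z_1, z_2$, hence real analytic without needing a square root. I would also remark that the hypothesis ``real analytic path'' is used exactly here and in the appeal to the identity theorem, and that the smooth-but-not-analytic counterexample sketched in the paragraph preceding the lemma shows this hypothesis cannot be dropped. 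This completes the proof.
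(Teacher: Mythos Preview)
Your argument is correct. Both you and the paper reduce the claim to the finiteness of the zero set of a real analytic function on a compact interval, but the routes differ. The paper works in the hyperboloid model: each isometric sphere is the hyperplane orthogonal to a unit space-like vector $w_i(t)$, two such hyperplanes are tangent exactly when $\langle w_1(t), w_2(t)\rangle = 1$, and this single real analytic function controls the boundary of the covering set. You instead stay in the upper half-space model and write down two explicit Euclidean functions $\Phi(t) = (r_1 - r_2)^2 - |z_1 - z_2|^2$ and $\Psi(t) = r_1^2 - r_2^2$, arguing that each $r_i$ is real analytic because $|c_i|^2$ is positive and real analytic. The paper's version is shorter and more conceptual, packaging the tangency condition into one bilinear form and leaving implicit the passage from ``tangency times are finite'' to ``the covering set has finitely many components''; your version is more elementary, needs no facts about the hyperboloid model, and spells out that last step (closed set with boundary contained in a finite zero set) explicitly. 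One small simplification: since $|c_i(t)|^2 > 0$ is real analytic, $r_i(t) = |c_i(t)|^{-1}$ is already real analytic, so you do not need the detour through $r_1 r_2 = \sqrt{r_1^2 r_2^2}$.
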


\begin{proof}
Any isometric sphere is a hyperplane.  Consider the hyperboloid model
for hyperbolic space $\mathbb{H}^3$, which is the positive sheet of
$\{ v \in \RR^{3,1} : \langle v,v \rangle = -1 \}$.  In this
model, any hyperplane is of the form $\{ w \in \mathbb{H}^3: \langle
v,w \rangle = 0 \}$ for some space--like vector $w \in \RR^{3,1}$. We
may choose $w$ so that $\langle w,w \rangle = 1$.  In other words, the
norm of $w$ is 1.

Given two hyperplanes $H_1$ and $H_2$ specified by space--like vectors
$w_1$ and $w_2$ with norm 1, they are tangent if and only if $\langle
w_1, w_2 \rangle =1$.  So, consider the isometric spheres
$I(\rho_t(g_1))$ and $I(\rho_t(g_2))$, which are specified by
space--like vectors $w_1(t)$ and $w_2(t)$ with norm 1.  Then $\langle
w_1(t), w_2(t) \rangle$ is a real analytic function of $t$.  Hence,
the set of times $t$ where $I(\rho_t(g_1))$ and $I(\rho_t(g_2))$ are
tangent is finite.
\end{proof}


The next lemma essentially is a list of ways that Euclidean
hemispheres (isometric spheres) can emerge out from other Euclidean
hemispheres in a real analytic path.

\begin{lemma}\label{lemma:hemisphere-emerge}
In a real analytic path through the space of minimally parabolic,
geometrically finite uniformizations of $M$, the ways in which an
isometric sphere may become visible (or invisible) are as follows:
\begin{enumerate}
\item\label{item:bdypt} On the boundary at infinity: two nested
  isometric spheres become tangent at a point on the boundary at
  infinity, then the inner one pushes through the outer.
\item\label{item:bdyvtx} On the boundary at infinity: two visible
  isometric spheres meet at a point on the boundary at infinity, a
  third moves into the point of their intersection, then pushes
  through.
\item\label{item:Fedge} Away from the boundary at infinity: two
  visible isometric spheres meet at an edge of $\F$, a third also
  meets the length of the edge, then pushes through.
\item\label{item:Fvtx} Away from the boundary at infinity: three or
  more visible isometric spheres intersect in a vertex of $\F$,
  another moves into the vertex and then pushes through.
\end{enumerate}
It is also possible that multiple new isometric spheres become visible
or invisible simultaneously at the same points on the boundary at
infinity, or on the same edge or vertex of $\F$.

No isometric sphere may become visible without intersecting any other
visible isometric sphere.
\end{lemma}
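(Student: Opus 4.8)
The plan is to analyze, in a real analytic family, exactly how the subset $V(g) \subset [t_-,t_+]$ of times at which a given isometric sphere $I(\rho_t(g))$ is visible can fail to be open; by Lemma~\ref{lemma:open} it \emph{is} open in the minimally parabolic geometrically finite locus, so the boundary of $V(g)$ consists of times where some \emph{other} uniformization phenomenon is forced to occur, and we must catalogue these. First I would fix a real analytic path and a group element $g$, and consider the transition times: times $t_0$ such that $I(\rho_t(g))$ is visible on one side of $t_0$ but not the other (or is visible at $t_0$ but not in a punctured neighborhood, or vice versa). Using Lemma~\ref{lemma:visible-strict-inequality}, visibility of $I(g)$ at $t$ is equivalent to $I(\rho_t(g)) \not\subset \bigcup_{h} \overline{B(\rho_t(h))}$, the union over $h \in \Gamma \setminus (\Gamma_\infty \cup \Gamma_\infty g)$. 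By Lemma~\ref{lemma:real-analytic}, for any fixed pair the covering times form a finite union of closed intervals and points; combined with the local finiteness coming from Lemma~\ref{lemma:not-gf} (isometric spheres have bounded radius, so only finitely many can meet a fixed compact region — the same compactness argument used in the proof of Lemma~\ref{lemma:open}), near any $t_0$ only finitely many elements $h$ are relevant, so the analysis reduces to a finite local picture.

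Next I would do the case analysis at a transition time $t_0$. At $t_0$, the sphere $I(\rho_{t_0}(g))$ lies in the union $\bigcup \overline{B(\rho_{t_0}(h))}$ but is ``just barely'' covered, i.e.\ it is not in the union of the \emph{open} half-balls $B(\rho_{t_0}(h))$ at nearby times on the visible side. The key observation is that the locus where a point of $I(g)$ fails strict visibility is where $I(g)$ touches the closure of some $\overline{B(h)}$. Either this contact point lies on the boundary $\CC$ at infinity, or it lies in the interior of $\HH^3$. If it is on $\CC$: the contact is a tangency of boundary circles $\partial I(g)$ and $\partial I(h)$, and depending on whether the disks $D(h)$ for other $h$ also pass through the tangency point, one gets case~\eqref{item:bdypt} (a single covering sphere, so two nested spheres becoming tangent with the inner pushing through) or case~\eqref{item:bdyvtx} (the tangency point is a common boundary point of two already-visible spheres, and a third moves into it). If the contact is interior: the set $I(g) \cap \partial(\text{covered region})$ is where $I(g)$ meets $\F$, which by the cell structure is either an edge of $\F$ (contained in one other $I(h)$) or a vertex of $\F$ (intersection of two or more $I(h)$); these give cases~\eqref{item:Fedge} and \eqref{item:Fvtx} respectively. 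The ``simultaneous'' clause is handled by noting several of these contacts may occur at the same parameter value. The final sentence — that an isometric sphere cannot become visible without meeting another visible one — is immediate from this: at the instant of becoming visible, $I(g)$ emerges from a lower-dimensional cell of $\F$, which by definition of the cell structure is contained in (the intersection with) other visible isometric spheres.

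The main obstacle I anticipate is making precise the claim that ``the inner one pushes through the outer'' in case~\eqref{item:bdypt}, and more generally controlling the local geometry so that one genuinely gets a \emph{tangency followed by transversal crossing} rather than some degenerate higher-order contact — this is exactly where real analyticity (Lemma~\ref{lemma:real-analytic}) is doing the work, ruling out the infinite oscillation described in the paragraph preceding that lemma. I would lean on the fact that the relevant inner products $\langle w_1(t), w_2(t)\rangle$ are real analytic, so tangency times are isolated and the sign of the ``distance between spheres'' function changes finitely often; at a simple zero one gets the clean ``pushes through'' picture, and at a higher-order zero one can perturb (Whitney approximation) or argue the sphere is visible on both sides, hence $t_0$ is not a genuine transition. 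A secondary subtlety is bookkeeping for which elements $h$ modulo $\Gamma_\infty$ are relevant and ensuring we have not missed a way visibility could change; here the equivalence \eqref{item:1vis}$\Leftrightarrow$\eqref{item:3vis} of Lemma~\ref{lemma:visible-strict-inequality} gives a clean finite criterion, and the geometric convergence input (as in Lemma~\ref{lemma:open}) guarantees no ``escape to infinity'' of relevant group elements along the path.
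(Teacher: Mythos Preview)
Your proposal is correct and takes essentially the same approach as the paper: a case analysis at a transition time $t_0$ on how the about-to-emerge sphere $I(g)$ is covered by visible spheres, leading to the four cases depending on whether the emergence happens at a point on $\CC$, at an edge of $\F$, or at a vertex of $\F$. The paper organizes the cases by the \emph{number} of covering spheres (one, two, three-or-more, with subcases for two) rather than by the \emph{location} of the contact as you do, and it is considerably less explicit than you are about the local finiteness and real-analyticity inputs; one small slip in your write-up is that an edge of $\F$ is contained in \emph{two} other visible spheres $I(h_1)\cap I(h_2)$, not one.
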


The options for single faces becoming visible are illustrated in
Figure \ref{fig:becoming-visible}.

\begin{figure}
  \includegraphics{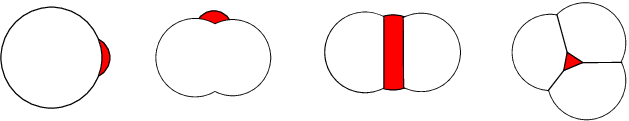}
  \caption{The ways in which isometric spheres can become visible.}
  \label{fig:becoming-visible}
\end{figure}

\begin{proof}
The fact that no isometric sphere may spontaneously arise without
intersecting any other isometric sphere follows from
Lemma \ref{lemma:iso-center-rad} and the fact that the path is real
analytic and hence continuous: each isometric sphere has positive
radius for all time.

We now show that the above four possibilities are the only
possibilities.  Suppose $I(g)$ is visible for time $t \in (t_0,
t_0+\epsilon)$, but not at time $t_0$.  Then at time $t_0$, the
isometric sphere corresponding to $I(g)$ must have one of the following
forms. 
\begin{enumerate}
\item It is covered by a single isometric sphere.  In this case, it
  will be tangent to another hemisphere at time $t_0$, then push
  through at a point that is visible on the boundary at infinity.
  This is option \eqref{item:bdypt} above.
\item It is not covered by a single isometric sphere, but is covered
  by two visible isometric spheres at time $t_0$.  Then it intersects
  two hemispheres at their edge of intersection at time $t_0$, then
  pushes through.  In this case, one of the following options holds.
  \begin{enumerate}
  \item The newly visible isometric sphere expands in such a way as to
    completely cover the old visible edge.  This gives option
    \eqref{item:Fedge} above.  
  \item The new isometric sphere slides in one direction, covering
    only a portion of the visible edge, and appearing on the boundary
    at infinity.  This gives option \eqref{item:bdyvtx} above.
  \item The new isometric sphere slides in one direction, to cover
    only a portion of the visible edge, but meets a third isometric
    sphere.  Then the new isometric sphere will become visible in a
    vertex of $\F$.  This is option \eqref{item:Fvtx} above.
  \end{enumerate}
\item Finally, at time $t_0$, if the new isometric sphere is not
  covered by either one or two isometric spheres alone, but is covered
  by three or more, then in this case the isometric sphere will meet
  the point where these isometric spheres intersect.  As it moves out
  from under the intersection, we will obtain option \eqref{item:Fvtx}
  above.
\end{enumerate}

As for multiple isometric spheres: In each case above it is possible
to have more than one hemisphere meeting the point(s) where an
isometric sphere is about to emerge.  In the case that a hemisphere is
covered by another visible hemisphere, it is possible to have multiple
hemispheres tangent at the same point, nested within each other, at
time $t_0$.  It is feasible that at time $t_0+\epsilon$, for any
sufficiently small $\epsilon>0$, a smaller hemisphere has pushed out
farther than a larger one, and so we obtain two new visible isometric
spheres.

Multiple distinct hemispheres may both meet the same edge of
intersection of visible isometric spheres, and then push through to
form new visible isometric spheres.  Similarly, multiple distinct
hemispheres may meet the point of intersection of multiple visible
isometric spheres, and push through to become visible at the same
time.  
\end{proof}

We have seen in examples that as the isometric spheres in a Ford
domain bump into each other, new isometric spheres become visible.  In
the next two lemmas, we show that this is the only way new isometric
spheres may become visible.  First, we set up some notation.

In the arguments below, we will consider a fixed collection of
isometric spheres and how they change.  Rather than considering the
entire Ford domain, we will consider instead whether given isometric
spheres are visible with respect to other isometric spheres in the
collection.  

\begin{define}
We will say an isometric sphere $I(g)$ is visible with respect to a
collection of group elements $\{k_1 \dots, k_n\} \subset \Gamma$ if
there is an open subset of $I(g)$ that is not contained in
$\Gamma_\infty(\bigcup_{j=1}^n \overline{B(k_j)})$.  Recall $B(k_j)$
is the open half space bounded by the isometric sphere $I(k_j)$.
Similarly, we say the intersection of two isometric spheres $I(g) \cap
I(h)$ is visible with respect to $\{k_1, \dots, k_n\}$ if $I(g) \cap
I(h)$ contains an open set which is not contained in
$\Gamma_\infty(\bigcup_{i=1}^n B({k_i}))$.
\label{def:visiblewrt}
\end{define}

Suppose we have a real analytic path, parameterized by time $t$,
through the interior of the space of minimally parabolic geometrically
finite uniformizations of $M$, where $M$ is a hyperbolizable
3--manifold with only one rank 2 cusp.  For any time $t$, we obtain
the region $\F(t)$ of Definition \ref{def:F}.  We may choose vertical
fundamental domains in a continuous manner to obtain a path of Ford
domains, given by finite polyhedra $P_t$.

\begin{lemma}
Suppose that at time $t_0$, the polyhedron $P_{t_0}$ is cut out by (a
vertical fundamental domain and) isometric spheres corresponding to
group elements $h_1, \dots, h_n$; and for some $\epsilon>0$, and all
time $t \in [t_0, t_0 + \epsilon)$ the combinatorics of the visible
intersections of these isometric spheres do not change.  That is, no
new visible intersections of these particular faces arise, and no
visible intersections of these faces disappear.  Then for all $t \in
[t_0, t_0 + \epsilon)$, faces corresponding to $h_1, \dots, h_n$
remain exactly those faces that are visible in a Ford spine at time
$t$.
\label{lemma:no-change-no-new}
\end{lemma}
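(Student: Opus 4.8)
The plan is to show that the faces $h_1, \dots, h_n$ cannot cease to cut out the Ford domain except by a change in the combinatorics of their visible intersections. First I would observe that the polyhedron $P_t$ cut out by the isometric spheres $I(\rho_t(h_1)), \dots, I(\rho_t(h_n))$ together with the chosen vertical fundamental domain varies continuously (indeed real analytically) with $t$, since each isometric sphere does, by Lemma \ref{lemma:iso-center-rad}. At $t_0$ this polyhedron, with its face pairings by the $\rho_{t_0}(h_i)$ and by $\Gamma_\infty$, glues up to the manifold $M$ with $\pi_1 \cong \Gamma$; this is the content of $P_{t_0}$ being the Ford domain together with Lemma \ref{lemma:finding-ford}. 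The face pairings remain isometries for all $t$ (they come from fixed group elements acting on $\HH^3$), so the only hypothesis of the Poincar\'e polyhedron theorem, Theorem \ref{thm:poincare1}, that could fail as $t$ moves is the edge condition: around each edge class the dihedral angles must sum to $2\pi$ and the monodromy must be trivial.

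Next I would argue that, under the hypothesis that the combinatorics of the visible intersections do not change on $[t_0, t_0+\epsilon)$, these edge conditions persist. The edge classes of $P_t$ are in bijection with the visible intersections $I(\rho_t(h_i)) \cap I(\rho_t(h_j))$ and this bijection, together with the cyclic ordering of faces around each edge and the face-pairing cycle, is exactly the combinatorial data assumed constant. The monodromy around an edge class is a product of the fixed isometries $\rho_t(h_i^{\pm 1})$ (and elements of $\Gamma_\infty$) determined by that cycle; since at $t_0$ it equals the identity, and the identity is isolated among the relevant discrete set of group elements appearing — or, more robustly, since the product is a fixed word in the $h_i$ evaluated under the continuous family $\rho_t$ and equals $1 \in \pi_1(M)$ as a relator that does not change — it remains the identity for all $t$ in the interval. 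Given trivial monodromy, the dihedral-angle sum around each edge is a continuous function of $t$ taking values in $2\pi\ZZ$, hence constant and equal to $2\pi$. Therefore Theorem \ref{thm:poincare1} applies at every $t \in [t_0, t_0+\epsilon)$: the glued object is a smooth hyperbolic manifold with fundamental group $\Gamma$, so by Lemma \ref{lemma:finding-ford} the polyhedron $P_t$ is a Ford domain, and the faces $h_1, \dots, h_n$ are precisely the visible faces, corresponding to the faces of the Ford spine at time $t$.

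I expect the main obstacle to be making rigorous the claim that no \emph{new} visible isometric sphere — one not among $h_1, \dots, h_n$ — can appear inside $P_t$ without first producing a new visible intersection among the $h_i$'s, i.e.\ that ``the combinatorics don't change'' genuinely forces $P_t$ to remain a fundamental domain rather than merely a region containing one. The cleanest route is the Poincar\'e-theorem argument above: if $P_t$ glues to a smooth complete hyperbolic manifold with the right fundamental group, Lemma \ref{lemma:finding-ford} forces it to \emph{be} the Ford domain, leaving no room for extra visible faces. So the real work is verifying the edge conditions of Theorem \ref{thm:poincare1} survive, and in particular ruling out the degenerate possibility that an edge class of $P_{t_0}$ "opens up" — a new face slipping into an edge — which is precisely excluded by the hypothesis that no visible intersection of the $h_i$ disappears and no new one arises, together with the earlier analysis (Lemma \ref{lemma:hemisphere-emerge}) that a new visible face must enter through an existing visible edge, vertex, or boundary point. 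I would spell out this last appeal to Lemma \ref{lemma:hemisphere-emerge} carefully, since it is what guarantees that a change in the global set of visible faces is detected by a change in the local combinatorics of the $h_i$.
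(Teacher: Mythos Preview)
Your proposal is correct and follows essentially the same route as the paper: verify that the edge cycles and hence the monodromy words of the candidate polyhedron $P_t$ are unchanged because the combinatorics of the $h_i$ are unchanged, use continuity to keep the dihedral-angle sums at $2\pi$, apply Theorem~\ref{thm:poincare1}, and then invoke Lemma~\ref{lemma:finding-ford} to conclude $P_t$ is the Ford domain. The only superfluous step is your closing appeal to Lemma~\ref{lemma:hemisphere-emerge}: once Lemma~\ref{lemma:finding-ford} certifies $P_t$ as the Ford domain there is no room for an extra visible face, so no separate argument about faces ``slipping into edges'' is needed here (that analysis belongs to the next lemma, where intersections are allowed to disappear).
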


To summarize, when the combinatorics of the visible intersections of
faces is unchanged, no new visible faces may arise.

\begin{proof}
The proof is by the Poincar{\'e} polyhedron theorem.  For any $t \in
(t_0, t_0+\epsilon)$, let $Q_t$ be the polyhedron cut out by isometric
spheres corresponding to the group elements $h_1, \dots, h_n$ and the
vertical fundamental domain of $P_t$.  Let $\mathcal{G}_t$ be the
orbit of $Q_t$ under $\Gamma_\infty$.

Because there are no new visible intersections, and no visible
intersections disappear, for each edge of $Q_t$ arising from
intersections of isometric spheres, the faces meeting that edge cycle
must be unchanged from that of $P_{t_0}$, and therefore the monodromy
around that edge is unchanged from that at time $t_0$.  Because the
monodromy is the identity at time $t_0$, it must be the identity at
time $t$, all $t \in (t_0, t_0+\epsilon)$.  Moreover, since the
dihedral angles about any edge at time $t_0$ sum to $2\pi$, and since
dihedral angles about an edge with monodromy the identity must sum to
a multiple of $2\pi$, continuity implies that the dihedral angles sum
to $2\pi$ for all $t \in (t_0, t_0+\epsilon)$.  Similarly, this is
true of translates of edges under $\Gamma_\infty$, so holds for edges
of $\mathcal{G}_t$.

Additionally, all isometric sphere faces of $\mathcal{G}_t$ are glued
isometrically by continuity: They are glued isometrically at time
$t_0$, when $\mathcal{G}_0$ is the equivariant Ford domain $\F$, and
by Lemma \ref{lemma:edge-visible} their intersections with other
isometric spheres continue to be glued isometrically.  Therefore,
visible regions continue to be glued isometrically.

By Theorem \ref{thm:poincare1}, gluing faces of $\mathcal{G}_t$ yields
a hyperbolic manifold with fundamental group generated by the face
pairings $h_1, \dots, h_n$, equivariant with respect to
$\Gamma_\infty$. Therefore when we quotient by $\Gamma_\infty$, we get
a manifold whose fundamental group is isomorphic to that of the
original manifold.  Then Lemma \ref{lemma:finding-ford} implies that
$\mathcal{G}_t$ must equal the equivariant Ford domain at time $t$.
Hence only the faces $h_1, \dots, h_n$ are visible at time $t$.
\end{proof}

\begin{lemma}
Suppose that at time $t_0$, the equivariant Ford domain $\F_{t_0}$ is
cut out by isometric spheres corresponding to group elements $h_1,
\dots, h_n$ and their translates under $\Gamma_\infty$; and for some
$\epsilon>0$ and all time $t \in [t_0, t_0+\epsilon)$, there are no
  new visible intersections of faces corresponding to the $h_j$ or
  their translates, although some visible intersections may disappear.
  Then no new visible faces arise in this time interval.
\label{lemma:nobump-nonew}
\end{lemma}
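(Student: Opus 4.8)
The plan is to reduce this statement to the previous lemma (Lemma \ref{lemma:no-change-no-new}) by handling the one phenomenon that Lemma \ref{lemma:no-change-no-new} explicitly forbids but that the present hypothesis allows: the \emph{disappearance} of visible intersections. The key observation is that when a visible edge of $\F$ disappears at a time $t_1 \in (t_0, t_0+\epsilon)$ without any new visible intersection appearing, it must do so because one of the isometric spheres meeting that edge rises up and covers the edge entirely from the inside. In other words, the disappearance of an intersection $I(h_i)\cap I(h_j)$ happens exactly when $I(h_i)$ (say) comes to completely cover $I(h_j)$ along that edge, so that $I(h_j)$ either becomes invisible there or the edge simply ceases to be an edge of $\F$. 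By Lemma \ref{lemma:real-analytic}, for each pair the set of times where one covers the other is a finite union of closed intervals and points; hence in the interval $[t_0,t_0+\epsilon)$ there are only finitely many times at which the combinatorics of visible intersections among the $h_j$ can change by a disappearance. So I would partition $[t_0,t_0+\epsilon)$ into finitely many subintervals on which no visible intersection appears \emph{or} disappears.

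On the first such subinterval $[t_0, t_1)$ the hypotheses of Lemma \ref{lemma:no-change-no-new} are met verbatim (no new visible intersections, none disappear), so that lemma gives that $h_1,\dots,h_n$ remain exactly the visible faces throughout $[t_0,t_1)$. The next step is to pass through the critical time $t_1$. Here I would argue that at $t_1$ the equivariant Ford domain is still cut out by (a sub-collection of) the isometric spheres of $h_1,\dots,h_n$ and their $\Gamma_\infty$-translates: by continuity the region $\mathcal G_t$ cut out by these spheres varies continuously, its face-pairings remain isometric (Lemma \ref{lemma:edge-visible} and continuity, as in the proof of Lemma \ref{lemma:no-change-no-new}), and the edge cycles that survive at $t_1$ have monodromy the identity and dihedral-angle sum $2\pi$ by continuity from the left; any edge that collapses at $t_1$ simply disappears from the polyhedron and imposes no condition. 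Thus the Poincar\'e polyhedron theorem (Theorem \ref{thm:poincare1}) applies at $t_1$, and Lemma \ref{lemma:finding-ford} identifies $\mathcal G_{t_1}$ with $\F_{t_1}$; so the visible faces at $t_1$ form a subset of $\{h_1,\dots,h_n\}$. Now re-apply Lemma \ref{lemma:no-change-no-new} starting from $t_1$ on the next subinterval $[t_1,t_2)$, using this (possibly smaller) surviving collection of $h_j$'s. Iterating through the finitely many subintervals covers all of $[t_0,t_0+\epsilon)$, and at every time the visible faces are among $h_1,\dots,h_n$ — i.e.\ no new visible face ever arises.

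The main obstacle I expect is the careful bookkeeping at the critical times $t_1, t_2, \dots$: one must check that when a visible intersection disappears, nothing pathological happens to $\mathcal G_t$ — in particular that $\mathcal G_t$ stays a genuine finite-sided polyhedron glued up correctly, that no edge-cycle condition is suddenly violated, and that the face collection only shrinks (never grows) across $t_1$. The real-analyticity hypothesis (via Lemma \ref{lemma:real-analytic}) is what rules out infinitely many such critical times accumulating inside $[t_0,t_0+\epsilon)$; without it the finite partition would fail and the iteration could not even get started. A secondary subtlety is that at a critical time several intersections might disappear at once, or an isometric sphere might pass through tangency with several others simultaneously — but this only affects how many faces drop out of the collection at that instant, not the logic, since we always re-apply Lemma \ref{lemma:no-change-no-new} to whatever sub-collection remains visible on the next open subinterval.
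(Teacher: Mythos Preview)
Your overall strategy --- partition $[t_0,t_0+\epsilon)$ at the finitely many critical times and iterate Lemma~\ref{lemma:no-change-no-new} on each subinterval --- is different from the paper's argument, which works globally on the whole interval at once using Lemma~\ref{lemma:hemisphere-emerge} to analyse \emph{how} edges vanish. Your modular approach could be made to work, but as written there are two genuine problems.

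First, your description of the mechanism by which a visible intersection disappears is not right. You say ``the disappearance of an intersection $I(h_i)\cap I(h_j)$ happens exactly when $I(h_i)$ comes to completely cover $I(h_j)$ along that edge,'' and then invoke Lemma~\ref{lemma:real-analytic} (which is about covering/tangency) to get finitely many critical times. But an edge of $\F$ does not vanish by one sphere covering another: the paper shows (via the reverse of Lemma~\ref{lemma:hemisphere-emerge}) that the only ways an edge can disappear are by its Euclidean length shrinking to zero --- cases \eqref{item:bdypt}, \eqref{item:bdyvtx}, \eqref{item:Fvtx} --- since the ``covering'' case \eqref{item:Fedge} would force a \emph{new} visible intersection to appear, contrary to hypothesis. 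Moreover, because the visible portion of an edge is mapped isometrically around its edge cycle (Lemma~\ref{lemma:edge-visible}), the whole edge class vanishes together. Your finiteness claim is still salvageable from real analyticity, but not from Lemma~\ref{lemma:real-analytic} as stated, and the geometric picture you give is misleading.

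Second, and more seriously, your bridge across the critical time $t_1$ has a gap. You verify the Poincar\'e conditions for the surviving faces and then say ``Lemma~\ref{lemma:finding-ford} identifies $\mathcal G_{t_1}$ with $\F_{t_1}$.'' But Lemma~\ref{lemma:finding-ford} requires that the face pairings generate the \emph{full} group $\Gamma$. If some $h_j$ has become invisible at $t_1$, the surviving face pairings may generate only a proper subgroup, and Lemma~\ref{lemma:finding-ford} does not apply. The paper handles exactly this possibility with an argument you do not mention: if at $t_1$ some new isometric sphere $I(k)$ were visible (which is what it would mean for the surviving $h_i$'s to fail to cut out $\F_{t_1}$), then by Lemma~\ref{lemma:open} visibility is an open condition, so $I(k)$ would already be visible on some $(t_1-\delta,t_1)$ --- contradicting that only $h_1,\dots,h_n$ were visible there. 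Without invoking Lemma~\ref{lemma:open} at this step, your iteration cannot be restarted at $t_1$, and the argument stalls.
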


\begin{proof}
Again let $\mathcal{G}_t$ be the polyhedron cut out by isometric
spheres corresponding to $h_1, \dots, h_n$ at time $t$ and their
translates under $\Gamma_\infty$, so that $\mathcal{G}_{t_0} =
\F_{t_0}$.

If the combinatorics of intersections of isometric spheres remains as
it was at time $t_0$, then the previous lemma implies there are no new
visible faces.  So suppose the combinatorics changes.  By hypothesis,
no visible intersections of faces corresponding to $h_1, \dots, h_n$
arise.  Hence some intersection visible at time $t_0$ must disappear.
Without loss of generality, suppose faces corresponding to $h_1$ and
$h_2$ intersect visibly at time $t_0$, but not at time $t$.

If a visible edge disappears, it must do so in one of the ways of
Lemma \ref{lemma:hemisphere-emerge}.  Note that each of the ways
\eqref{item:bdypt}, \eqref{item:bdyvtx}, and \eqref{item:Fvtx} in this
lemma involve the Euclidean length of the edge shrinking to zero.
Only possibility \eqref{item:Fedge} does not.  However, in that case,
an edge disappears by sliding into another edge which was not
initially visible.  Because it was not initially visible, the two
isometric spheres meeting in this edge did not initially intersect
visibly.  Thus in case \eqref{item:Fedge}, two isometric spheres that
did not intersect visibly at time $t_0$ must intersect visibly
thereafter, contradicting hypothesis.  Therefore, this option of Lemma
\ref{lemma:hemisphere-emerge} does not happen.

Thus the Euclidean length of the visible intersection between faces
corresponding to $h_1$ and $h_2$ must decrease to zero.  Lemma
\ref{lemma:edge-visible} implies that the Euclidean length of the
image of the visible intersection under isometries corresponding to
$h_1$ and $h_2$ must also decrease to zero (as the visible edge is
mapped isometrically).  Applying the result to all edges in this edge
class, we see that the edge class must vanish from the Ford domain
entirely.  That is, all faces which meet the edge corresponding to the
visible intersection of $h_1$ and $h_2$ at time $t_0$ will cease to
intersect in pairs by time $t$ and the edge will be removed.

Now consider an edge class that remains visible with respect to faces
corresponding to $h_1, \dots, h_n$ and their translates under
$\Gamma_\infty$.  By the above argument, the edge cannot meet fewer
faces than it meets at time $t_0$, for then the entire edge would
disappear.  Since there are no additional visible intersections of the
$h_i$ and its translates, no additional face corresponding to $h_1,
\dots, h_n$ and their translates may meet the edge.  Hence a visible
edge with respect to the $h_i$ and their translates at time $t$
corresponds to a visible edge at time $t_0$, and has the same
monodromy, and therefore the monodromy is the identity.  Since this is
true for all $t \in (t_0, t_0+\epsilon)$, continuity implies the
dihedral angles about the edge sum to $2\pi$.

Next we show that faces corresponding to the $h_i$ are still glued
isometrically.  Lemma \ref{lemma:edge-visible} implies that their
intersections map to other intersections isometrically.  It could
happen that one of the faces corresponding to $h_1, \dots, h_n$ is no
longer visible with respect to the $h_i$ at time $t$.  Then we ignore
that face.  For other faces, the argument of Lemma \ref{lemma:g-ginv}
implies that if some portion of $h_j$ (or a translate) is visible with
respect to the other $h_k's$, then so must be a portion of $h_j^{-1}$.
Continuity implies visible faces glue isometrically.

By the above work, when we glue via face pairings, the result must be
a manifold by the Poincar{\'e} polyhedron theorem, Theorem
\ref{thm:poincare1}.  Because one of the faces $h_i$ may no longer be
visible, it could happen that the group generated by the pairings of
visible faces (and the quotient by $\Gamma_\infty$) no longer
generates $\pi_1(M)$, and so these isometric spheres do not give the
full equivariant Ford domain.  However, if all the $h_i$ remain
visible, then Lemma \ref{lemma:finding-ford} implies that
$\mathcal{G}_t$ is the equivariant Ford domain of our manifold, and we
are finished in this case.

So now suppose some $h_i$ becomes invisible.  In this case, there must
be some initial time at which a face $h_i$ is no longer visible, say
all the $h_i$ are visible for $t \in (t_0, t_1)$, but $h_j$ is not
visible at time $t_1$.  Up until this time, the above argument implies
that the visible isometric spheres corresponding to the $h_i$ and
their translates under $\Gamma_\infty$ cut out the equivariant Ford
domain of our manifold.

Suppose that at time $t_1$, the remaining visible isometric spheres no
longer cut out the equivariant Ford domain.  This means that at time
$t_1$, some other isometric sphere, say corresponding to $k$, must be
visible.  Lemma \ref{lemma:open} implies that there is some
$\epsilon>0$ such that the isometric sphere corresponding to $k$ is
visible for $t \in (t_1-\epsilon, t_1+\epsilon)$.  However, for $t \in
(t_1-\epsilon, t_1)$, the equivariant Ford domain is not cut out by an
isometric sphere corresponding to $k$.  This is a contradiction.

Thus in all cases, we have the setup of Lemma
\ref{lemma:finding-ford}.  So $\mathcal{G}_t$ is the equivariant Ford
domain, and hence there are no new visible isometric spheres.
\end{proof}

In a real analytic path of minimally parabolic geometrically finite
uniformizations of $M$, the dual structure to the Ford domain will be
changing.  It follows from Lemma \ref{lemma:open} that a dual edge
will exist for an open set of time.  The dual structure changes
smoothly during the path, except at a discrete set of points
corresponding to the addition or removal of a cell of the dual
structure.

In Example \ref{ex:bumping}, a new edge and a new 2--cell in the dual
structure are created when two visible isometric spheres meet across
portions of their boundaries on $\CC$.  In Example \ref{ex:sliding}, a
new edge, two new 2--cells, and a single 3--cell are created when two
visible isometric spheres slide into each other along a third visible
isometric sphere.  In this case the boundaries of the isometric
spheres on $\CC$ initially meet at a point where two other boundaries
of visible isometric spheres intersect.


\begin{define}
If in a real analytic path of minimally parabolic geometrically finite
uniformizations of $M$, two visible isometric spheres move to
intersect across portions of their boundaries on $\CC$, we will refer
to the move as \emph{bumping} at the boundary.  The reverse of this
move, where two isometric spheres pull apart at the boundary, we will
refer to as \emph{reverse bumping}.  This is the move of Example
\ref{ex:bumping}.

If an isometric sphere slides into the visible intersection of two
other isometric spheres at a point where the intersection meets the
boundary $\CC$, we call the move \emph{sliding} at the boundary.  Its
reverse we will call \emph{reverse sliding}.  This is the move of
Example \ref{ex:sliding}.

Finally, isometric spheres may also shift and change intersections
internally, without affecting the combinatorics of the boundary of the
dual structure.  We refer to these intersections as
\emph{internal moves}.
\label{def:moves}
\end{define}

For an example of an internal move, suppose two isometric spheres
$I(g)$ and $I(h)$ form a visible edge, and two additional isometric
spheres $I(k)$ and $I(\ell)$ slide together over that edge, such that
at some instant $t=t_0$ all four isometric spheres meet in a single
point.  At this instant, neither the intersection of $I(g)$ and $I(h)$
is visible, nor is the intersection of $I(k)$ and $I(\ell)$.  However,
for some $\epsilon>0$, the intersection of $I(k)$ and $I(\ell)$ will
be visible for time $(t_0, t_0+\epsilon)$, and the intersection of
$I(g)$ and $I(h)$ will be visible for time $(t_0-\epsilon, t_0)$.
This gives a ``retriangulation'' of the existing dual structure, in
which faces in the interior are removed and replaced by other faces,
and interior edges of the dual structure appear or disappear.  An
example of this phenomenon is a 2--3 Pachner move of a triangulation,
or its reverse, a 3--2 move.  See Figure \ref{fig:2-3move}.

\begin{figure}
\includegraphics{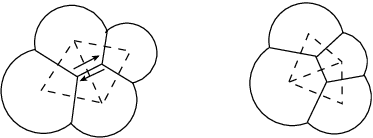}
\caption{A retriangulation of the dual structure.}
\label{fig:2-3move}
\end{figure}

\subsection{Paths and geodesic core tunnels}

We now present results that give evidence for Conjecture
\ref{conj:core-tunnel}.  We will be considering the
$(1;2)$--compression body $C$ once more.

Fix the following notation.  As before, let $\alpha$, $\beta$, and
$\gamma$ generate $\pi_1(C)$, with $\alpha$ and $\beta$ generating
$\pi_1(\D_-C) \cong (\ZZ \times \ZZ)$.  Suppose $\rho_t\co \pi_1(C)
\to PSL(2,\CC)$ is a real analytic path of minimally parabolic
geometrically finite uniformizations of $C$.  We will assume that
$\rho_t(\pi_1(\D_-C)) = \Gamma_\infty$ fixes the point at infinity of
$\HH^3$. 

The following lemma will guarantee that all structures on a particular
path through the space of minimally parabolic geometrically finite
uniformizations of $C$ have geodesic core tunnel.

\begin{lemma}
Suppose $\rho_t\co \pi_1(C) \to PSL(2,\CC)$ is a real analytic path of
minimally parabolic geometrically finite uniformizations of $C$ such
that at time $t=0$, $M_0 = \HH^3/\rho_0(\pi_1(C))$ admits a Ford spine
such that
\begin{enumerate}
\item[(a)] the isometric sphere corresponding to $\rho_0(\gamma)$ is
  visible, and
\item[(b)] the core tunnel is isotopic to the geometric dual of this
  face of the Ford spine.
\end{enumerate}
Suppose that for $t\in (0, t_0)$, the isometric sphere corresponding
to $\rho_t(\gamma)$ remains visible.  Then the core tunnel is geodesic
for all $t \in (0, t_0)$.
\label{lemma:tunnel-dual-gamma}
\end{lemma}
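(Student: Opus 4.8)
The plan is to reduce the claim, via the deformation retraction of Lemma~\ref{lemma:retract-dual}, to a statement about how the dual complex collapses, and then to propagate that statement along the path by a continuity (open-and-closed) argument. Since $\rho_t$ is a path of uniformizations of $C$, we may fix markings so that each $M_t = \HH^3/\rho_t(\pi_1(C))$ is identified with the interior of $C$ continuously in $t$, so that isotopy classes of arcs can be compared across nearby $t$. By hypothesis~(a) together with the standing hypothesis, the isometric sphere $I(\rho_t(\gamma))$ is visible for every $t\in[0,t_0)$, so its geometric dual edge $e_t(\gamma)$ — the vertical geodesic from the center of $I(\rho_t(\gamma))$ up to infinity, projected to $M_t$ — is a well-defined \emph{geodesic} arc with both endpoints on the cusp torus, lying in the spine $D_t\cup H_\infty$ furnished by Lemma~\ref{lemma:retract-dual}. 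A regular neighborhood of $e_t(\gamma)\cup H_\infty$ is a copy of $C$ in which $e_t(\gamma)$ is the core tunnel. Hence it suffices to prove: for every $t\in[0,t_0)$, the spine $D_t\cup H_\infty$ collapses onto $e_t(\gamma)\cup H_\infty$; since Lemma~\ref{lemma:homotopic-core} already supplies the homotopy between the core tunnel and $e_t(\gamma)$, this exhibits the core tunnel as isotopic to $e_t(\gamma)$.

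Let $J = \{t\in[0,t_0): \text{the core tunnel of } M_t \text{ is isotopic to } e_t(\gamma)\}$. By hypothesis~(b) and Lemma~\ref{lemma:retract-dual} we have $0\in J$, and $J$ is exactly the set of $t$ on which $D_t\cup H_\infty$ collapses onto $e_t(\gamma)\cup H_\infty$. I would show $J$ is open and closed in the connected set $[0,t_0)$. By Lemma~\ref{lemma:real-analytic} applied to the real analytic path, together with the fact that at each fixed time only finitely many isometric spheres are visible, the times in $[0,t_0)$ at which the combinatorics of the visible isometric spheres and their visible intersections change form a discrete subset $t_1<t_2<\dots$. On each complementary interval, Lemmas~\ref{lemma:no-change-no-new} and \ref{lemma:nobump-nonew} show that no new visible faces appear and the combinatorial type of $\F_t$, hence of the dual $D_t$, is constant; a standard continuity argument then gives an ambient isotopy of $M_t$ carrying $(D_{t'},e_{t'}(\gamma))$ to $(D_t,e_t(\gamma))$ for $t,t'$ in the same interval, so membership in $J$ is constant there.

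It remains to cross a change time $t_i$. By Lemma~\ref{lemma:hemisphere-emerge} the visible faces change by one of the moves listed there, i.e.\ a bumping, reverse bumping, sliding, reverse sliding, or internal move of Definition~\ref{def:moves}; and since the $\gamma$-face is visible on both sides of $t_i$, the edge $e(\gamma)$ persists and each move creates or destroys only dual cells incident to \emph{other} faces. Each such move is a local modification of the dual complex — a collapse/expansion, or a Pachner-type retriangulation as in Figure~\ref{fig:2-3move} — supported in a ball that meets $e(\gamma)$ in at most a subarc disjoint from the altered cells; the altered piece of $D_t$ collapses onto what survives of $e_t(\gamma)$ exactly as in the explicit collapses carried out in Examples~\ref{ex:bumping} and \ref{ex:sliding} (``collapse the faces in the order they appeared''). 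Hence $D_t\cup H_\infty$ still collapses onto $e_t(\gamma)\cup H_\infty$ on a neighborhood of $t_i$ whenever it does on one side, so $J$ is open and closed. Therefore $J=[0,t_0)$, and since $e_t(\gamma)$ is a geodesic, the core tunnel of $M_t$ is geodesic for all $t\in(0,t_0)$.

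The step I expect to be the main obstacle is the analysis at the change times: one must verify, for each of the moves of Lemma~\ref{lemma:hemisphere-emerge} that does not make the $\gamma$-face invisible, that the corresponding change of the dual structure is genuinely a local collapse/expansion relative to the edge $e(\gamma)$, and so does not alter the isotopy class of $e_t(\gamma)$ in $M_t$ — in effect promoting the ad hoc collapsing arguments of Examples~\ref{ex:bumping} and \ref{ex:sliding} to a uniform statement. A secondary point needing care is the bookkeeping ensuring the change times do not accumulate in $[0,t_0)$, for which real analyticity (Lemma~\ref{lemma:real-analytic}) and geometric finiteness at each time are the essential inputs.
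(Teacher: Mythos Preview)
Your argument is substantially more elaborate than what the lemma requires, and the extra machinery introduces gaps you yourself flag as unresolved.  The paper's proof bypasses the full dual complex entirely: since $I(\rho_t(\gamma))$ is visible for every $t\in[0,t_0)$, its dual edge $e_t(\gamma)$ is a well-defined embedded geodesic arc in $M_t$ for each such $t$, and the center of $I(\rho_t(\gamma))$ (hence $e_t(\gamma)$ itself) varies real analytically with $t$.  A continuously varying one-parameter family of embedded arcs is an ambient isotopy, so $e_{t_1}(\gamma)$ is isotopic to $e_0(\gamma)$ for every $t_1\in(0,t_0)$.  Hypothesis~(b) then finishes the proof.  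That is the whole argument.

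Your route --- showing that $D_t\cup H_\infty$ collapses onto $e_t(\gamma)\cup H_\infty$ at every time, then invoking Lemma~\ref{lemma:homotopic-core} to promote homotopy to isotopy --- is trying to prove something strictly stronger and harder.  It forces you to control the combinatorics of \emph{all} visible faces along the path, to show change times are discrete, and to verify that every move of Lemma~\ref{lemma:hemisphere-emerge} preserves collapsibility onto the $\gamma$-edge.  None of that is needed: the isotopy class of the single edge $e_t(\gamma)$ is insensitive to what the rest of the dual structure does, so the change-time analysis and the open-and-closed argument are superfluous.  Moreover, even granting the collapses, your passage from ``$M_t$ deformation retracts onto $e_t(\gamma)\cup H_\infty$ and the core tunnel is homotopic to $e_t(\gamma)$'' to ``the core tunnel is \emph{isotopic} to $e_t(\gamma)$'' is not immediate and would need its own justification.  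The direct continuity argument avoids all of this.
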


\begin{proof}
Consider the dual structure.  For each $t \in [0, t_0)$, since
$\rho_t(\gamma)$ is visible, there is an edge dual to it, which is a
geodesic.  The path $\rho_t$ gives a (real analytic) one--parameter
family of embedded edges dual to $\rho_t(\gamma)$.  For any $t_1 \in
(0, t_0)$, this restricts to an ambient isotopy of the edge dual to
$\rho_0(\gamma)$ to the edge dual to $\rho_{t_1}(\gamma)$.  Since the
edge dual to $\rho_0(\gamma)$ is isotopic to the core tunnel, the edge
dual to $\rho_{t_1}(\gamma)$ is also isotopic to the core tunnel, and
so the core tunnel is geodesic.  
\end{proof}

Now, we present a result that guarantees the core tunnel is geodesic
for many paths of uniformizations of $C$.  In the proof, for $g \in
\pi_1(C)$, we will sometimes denote $\rho_t(g)$ by $g_t$, or when
$\rho_t$ is clear, we will simply write $g$ to simplify notation.

\begin{theorem}
Suppose $\rho_t\co \pi_1(C) \to PSL(2,\CC)$ is a real analytic path of
minimally parabolic geometrically finite uniformizations of $C$ such
that $M_0=\HH^3/\rho_0(\pi_1(C))$ admits a Ford spine with just one
face.  Suppose for all $t>0$, there is a compression disk $D_t$
properly embedded in $C$, which does not meet any faces of the Ford
spine of $M_t = \HH^3/\rho_t(\pi_1(C))$.  Then for any $t>0$, the core
tunnel is geodesic, isotopic to an edge dual to the Ford spine.
\label{thm:disk-disjoint-ford}
\end{theorem}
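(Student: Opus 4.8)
The plan is to reduce the theorem to the single claim that $I(\rho_t(\gamma))$ is visible for every $t>0$, where $\gamma$ is the loxodromic generator of $\pi_1(C)$, and then to apply Lemma~\ref{lemma:tunnel-dual-gamma}. At $t=0$ the Ford spine of $M_0$ has exactly one face, which corresponds to the loxodromic generator (up to $\Gamma_\infty$ there is only one choice of loxodromic generator), so Proposition~\ref{prop:simple-ford} applies and the core tunnel of $M_0$ is isotopic to the geometric dual $e(\rho_0(\gamma))$ of this face. Thus hypotheses (a) and (b) of Lemma~\ref{lemma:tunnel-dual-gamma} hold at $t=0$, and if $I(\rho_t(\gamma))$ is visible for all $t\in(0,t_0)$ then that lemma yields exactly the desired conclusion. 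So everything comes down to visibility of $I(\rho_t(\gamma))$.

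To prove this I would combine the disk with some $3$--manifold topology. First, every compression disk of the $(1;2)$--compression body $C$ is isotopic to the cocore $\delta$ of its $1$--handle: $C$ is irreducible, $C$ cut along $\delta$ is $T^2\times I$ with incompressible boundary, and an innermost-disk argument then pushes any compression disk onto $\delta$. Consequently $\partial D_t$ is isotopic on $\partial_+C$ to the belt curve $\partial\delta$; the class $[\partial D_t]$ is nontrivial in $\pi_1(\partial_+C)$ but trivial in $\pi_1(C)=\Gamma$, and it normally generates $\ker\bigl(\pi_1(\partial_+C)\to\Gamma\bigr)$. Moreover $\delta$ meets the core tunnel transversely in a single point, so $\delta$ is ``dual to $\gamma$''.

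Let $\Sigma_t$ denote the Ford spine of $M_t$. Cutting $M_t$ along $\Sigma_t$ produces a manifold which, since $\F_t$ is star-shaped toward $\infty$, is homeomorphic to $T^2\times[0,1)$: its ``floor'' is a torus assembled from $\partial_+C$ together with two copies (namely $I(g)$ and $I(g^{-1})$) of each visible face of $\Sigma_t$, and its other end runs out the rank two cusp. Because $D_t$ is disjoint from $\Sigma_t$ it survives the cut, and since $[\partial D_t]$ is trivial in $\pi_1=\Gamma_\infty$ while the floor torus is $\pi_1$--injective, $\partial D_t$ bounds a disk $E$ in the floor torus. As $[\partial D_t]$ is essential on $\partial_+C$, the disk $E$ cannot lie in the $\partial_+C$--part of the floor, so $E$ engulfs at least one visible face of $\Sigma_t$. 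The main obstacle --- where I expect the real work to lie --- is to upgrade this to the statement that the engulfed face is the $\gamma$--face, i.e.\ that $I(\rho_t(\gamma))$ is visible: one must rule out every other configuration of visible isometric spheres carrying the essential curve $\partial\delta$, using that $\delta$ is dual to $\gamma$, that $[\partial\delta]$ normally generates $\ker(\pi_1(\partial_+C)\to\Gamma)$, and that up to $\Gamma_\infty$ there is a unique loxodromic generator of $\Gamma$. (Equivalently: if $I(\rho_t(\gamma))$ were invisible, its shadow on $\CC$ would be contained in the union of the shadows of the visible faces, a lift of $\partial D_t$ into $\mathring{\F}_t$ would bound a disk inside $\overline{\F}_t\cap\Omega$, and then $\partial\delta$ would bound a disk in $\partial_+C$ --- a contradiction.)

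Once $I(\rho_t(\gamma))$ is known to be visible for every $t>0$, Lemma~\ref{lemma:tunnel-dual-gamma} applies on $(0,t_0)$ for each $t_0$ in the parameter interval, and hence for all $t>0$ the core tunnel of $M_t$ is geodesic, isotopic to the edge $e(\rho_t(\gamma))$ dual to this face of the Ford spine.
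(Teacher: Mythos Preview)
Your reduction is correct and matches the paper exactly: both arguments come down to proving that $I(\rho_t(\gamma))$ is visible for every $t$, after which Lemma~\ref{lemma:tunnel-dual-gamma} (together with Proposition~\ref{prop:simple-ford} at $t=0$) finishes the job.

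The gap is the step you yourself flag. Knowing that the disk $E$ in the floor torus engulfs \emph{some} visible face does not, by itself, tell you that $I(\rho_t(\gamma))$ is among the visible faces. Your suggested tools --- that $\delta$ is dual to $\gamma$, that $[\partial\delta]$ normally generates the kernel, that the loxodromic generator is unique up to $\Gamma_\infty$ --- do not obviously rule out a configuration in which the visible faces correspond only to words like $\gamma^{\pm 2},\gamma^{\pm 3},\dots$ (or worse), with $I(\gamma)$ buried beneath them. Your parenthetical alternative has the same problem: from ``the shadow of $I(\gamma)$ is covered by shadows of visible faces'' you cannot directly conclude that a lift of $\partial D_t$ bounds in $\overline{\F}_t\cap\Omega$, nor that $\partial\delta$ bounds in $\partial_+C$; the step from covering of shadows to triviality of $\partial\delta$ is exactly the missing content.

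The paper closes this gap by working upstairs rather than in $M_t$. One lifts $D_{t_0}$ to $\HH^3$ and arranges (by a surgery along arcs in $\CC$) that $\partial D_{t_0}$ encircles a single connected component of visible isometric spheres; after adjusting the generator one may take this component to contain $I(\gamma)$. Because $\partial D_{t_0}$ separates this component from all its $\Gamma_\infty$--translates, no sphere inside can meet a nontrivial $\Gamma_\infty$--translate of any other. One then applies Poincar\'e and Lemma~\ref{lemma:finding-ford} to the isometric spheres of the \emph{cyclic} group $\langle\gamma\rangle$ together with their $\Gamma_\infty$--translates: these already glue up to a manifold with fundamental group $\rho_{t_0}(\pi_1(C))$, so they are the entire equivariant Ford domain. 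Finally, the paper invokes the classification of Ford domains of cyclic loxodromic groups due to J{\o}rgensen and Drumm--Poritz, which says that $I(\gamma)$ is always a visible face of the Ford domain of $\langle\gamma\rangle$. This is the substantive input your argument is missing; without it (or an equivalent replacement) the proof is incomplete.
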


\begin{figure}
\begin{center}
\begin{tabular}{ccccc}
	\includegraphics[width=1.75in]{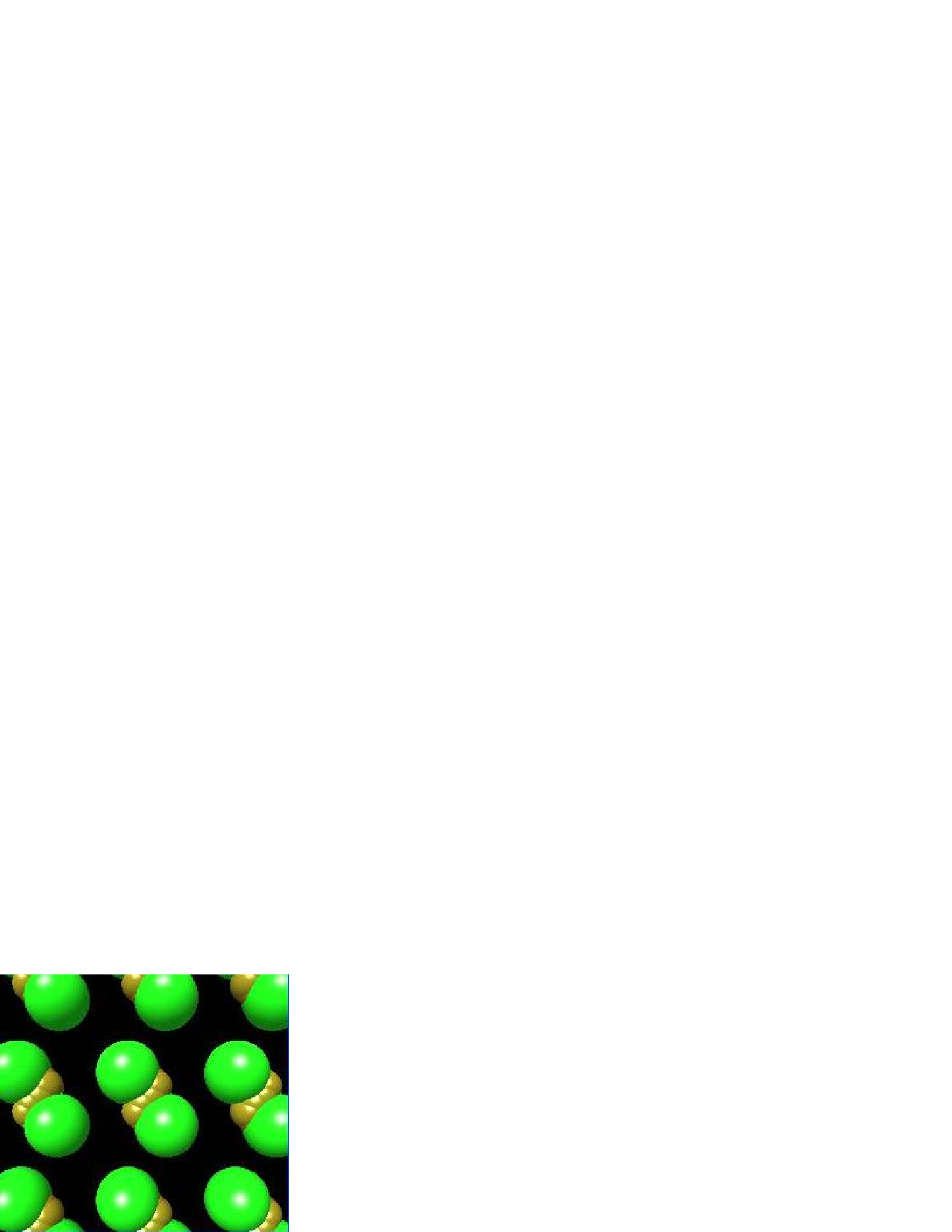} &
	\hspace{.01in} &
	\includegraphics[width=1.75in]{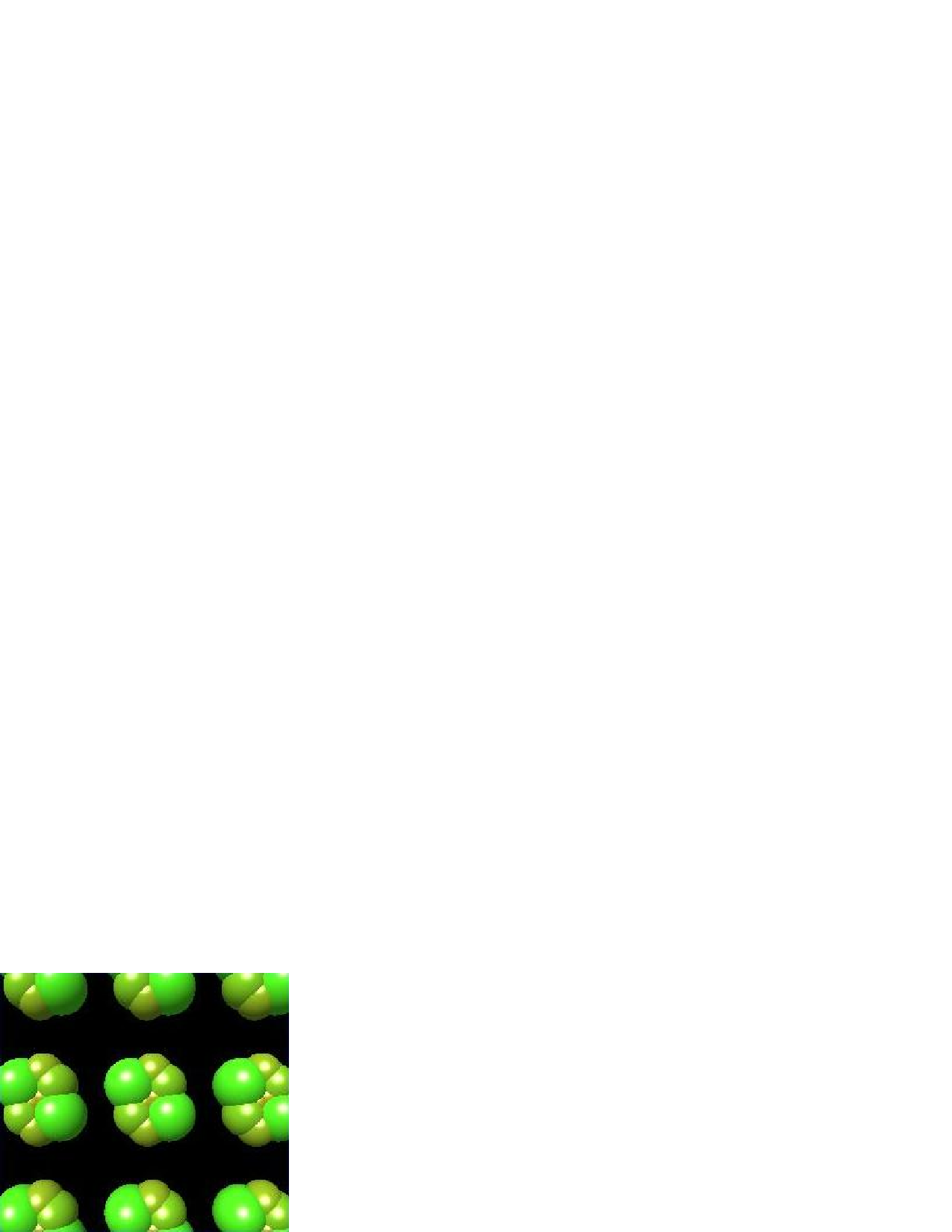} &
	\hspace{.01in} &
	\includegraphics[width=1.75in]{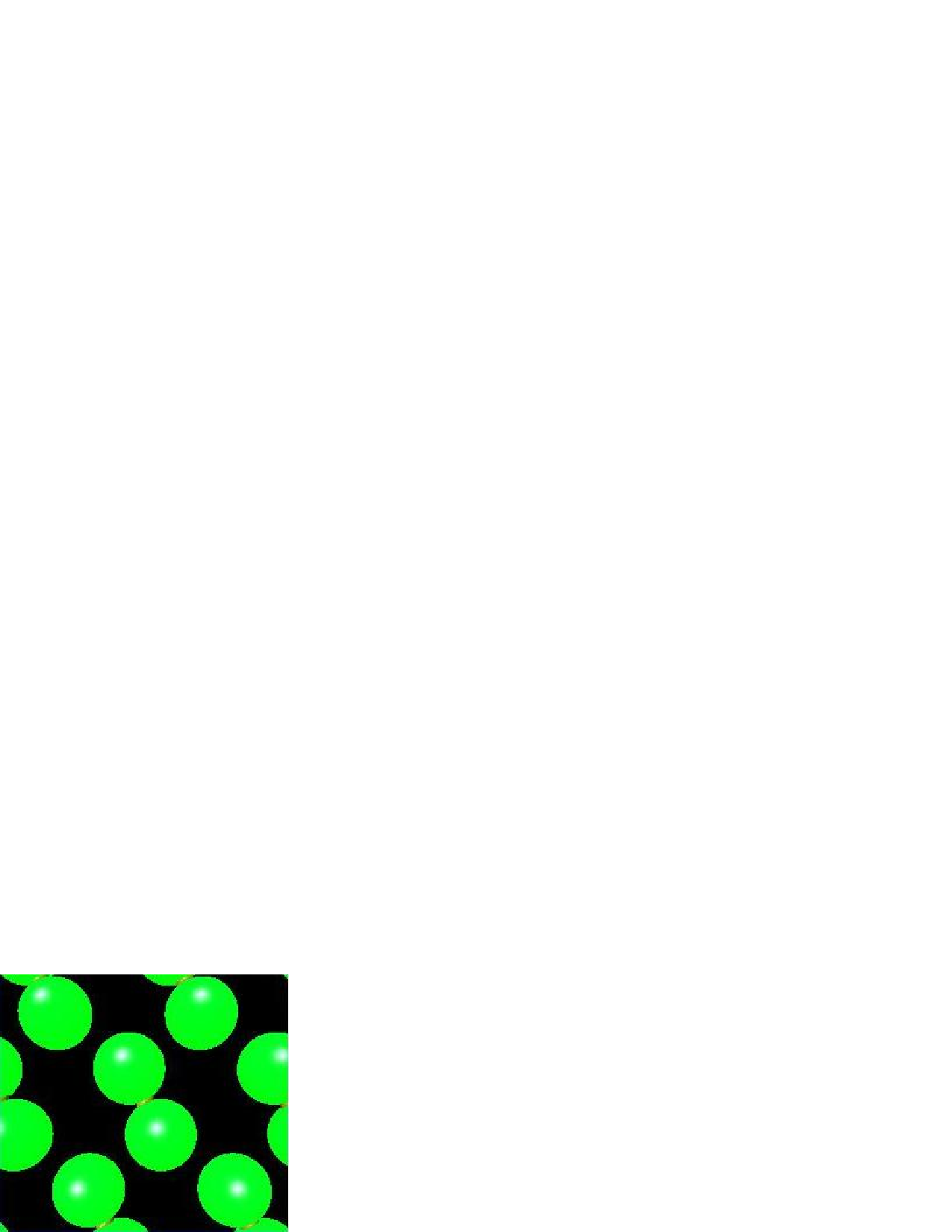}
\end{tabular}
\end{center}
\caption{Shown are examples of structures to which Theorem
\ref{thm:disk-disjoint-ford} applies.
}
\label{fig:compression-disk}
\end{figure}

\begin{proof}
Suppose the isometric sphere corresponding to $\gamma$ has remained
visible for all time $t$ in $(0, t_0)$.  We will show it is still
visible at time $t_0$.  Because isometric spheres are visible for an
open set of time, it will follow from Lemma
\ref{lemma:tunnel-dual-gamma} that the core tunnel is geodesic at time
$t_0$.

Consider a lift of the disk $D_{t_0}$ to $\HH^3$, which we will
continue to write as $D_{t_0}$, abusing notation slightly.

Without loss of generality, we may assume $\D D_{t_0}$ encircles a
single connected component of the isometric spheres of $\F$, for if
not, we may replace $D_{t_0}$ with a disk which has this property, as
follows.  If $\D D_{t_0}$ encircles more than one connected component,
then there is an arc $\alpha$ in $\CC$ from $\D D_{t_0}$ to itself
which meets no isometric spheres of $\F$.  Then there is a disk in
$\HH^3$ with boundary on $\alpha \subset \CC$ and on $D_{t_0}$ which
is disjoint from the isometric spheres of $\F$ and with interior
disjoint from $D_{t_0}$.  Replace $D_{t_0}$ with a portion of
$D_{t_0}$ and this new disk with boundary $\alpha$, reducing the
number of components encircled by $D_{t_0}$.  Repeat, as necessary, to
obtain $D_{t_0}$ whose boundary encircles a single connected component
of the isometric spheres of $\F$.

Without loss of generality, we may assume $\D D_{t_0}$ encircles
$I({\gamma})$ at time $t_0$.  Then note that $I({\gamma})$ cannot meet
$p(I(\gamma))$ for any $p \in \Gamma_\infty \setminus \{1\}$, or else
the faces $p^n(I(\gamma))$, $n\in\ZZ$ would form an infinite strip of
isometric spheres, and $\D D_{t_0}$ would have to intersect this
strip, contradicting assumption.  So we may assume $I(\gamma)$ (and
hence $I(\gamma^{-1})$) meets none of its translates under
$\Gamma_\infty = \Gamma_\infty(t_0)$.

Change generators, if necessary, so that the isometric sphere
$I(\gamma)$ is at least as close to $I(\gamma^{-1})$ as to any of the
translates of $I(\gamma^{-1})$ under $\rho_{t_0}(\Gamma_\infty)$ at
time $t_0$.

Suppose first that $I({\gamma})$ and $I({\gamma^{-1}})$ are disjoint
(or only meet at a single point on the boundary at infinity).  Then in
this case, as in the proof of Lemma \ref{lemma:simple-ford}, the
Poincar{\'e} polyhedron theorem implies that the object obtained by
gluing isometric spheres corresponding only to $I(\gamma)$ and
$I(\gamma^{-1})$ and their translates under $\Gamma_\infty$,
quotiented out by $\Gamma_\infty$, must be a manifold with fundamental
group isomorphic to $\pi_1(C)$.  Then Lemma \ref{lemma:finding-ford}
implies that the equivariant Ford domain in this case consists only of
faces $I(\gamma)$ and $I(\gamma^{-1})$ (and their translates under
$\Gamma_\infty$).  Thus $M_{t_0}$ must have a simple Ford spine
consisting of one face, so by Proposition \ref{prop:simple-ford}, the
core tunnel is geodesic.

Next suppose $I({\gamma})$ and $I({\gamma^{-1}})$ intersect.  Then
they (i.e.\ their boundaries) are contained within the region of $\CC$
bounded by $\D D_{t_0}$.  Let $I(g)$ and $I(h)$ be any isometric
spheres within this region.  Then note that for any nontrivial
parabolic $p\in \Gamma_\infty \setminus \{1\}$, $p (I(g))$ cannot meet
$I(h)$, for $p(I(g))$ must lie outside the region bounded by $\D
D_{t_0}$.

We claim that in this case, all visible isometric spheres in the
region bounded by $\D D_{t_0}$ are of the form $I(g)$ for $g$ an
element of the cyclic group $\langle \gamma \rangle$.  Again this will
follow from Lemma \ref{lemma:finding-ford}, as follows.  Consider the
isometric spheres corresponding to the cyclic group $\langle \gamma
\rangle$.  Ford domains of cyclic groups have been studied by
J{\o}rgensen \cite{jorgensen} and Drumm and Poritz
\cite{drumm-poritz}.  In particular, it is known that $\langle \gamma
\rangle$ is geometrically finite, so a finite number of isometric
spheres corresponding to this group will be visible with respect to
the other isometric spheres of the group.  Moreover, they will glue to
give a manifold, namely a layered solid torus.  Additionally, the Ford
domain for $\langle \gamma \rangle$ is connected.  Hence it lies
entirely within $\bdy D_{t_0}$, and thus it is disjoint from all its
translates under $\Gamma_\infty$.  Therefore when we consider all
translates under $\Gamma_\infty$ of visible isometric spheres
corresponding to the cyclic group $\langle \gamma \rangle$, the result
is a domain in $\HH^3$ cut out by isometric spheres, which glue to
give a manifold.  If we further take the quotient by $\Gamma_\infty$
then we obtain a manifold homeomorphic to the $(1;2)$--compression
body.  The fundamental group of this quotient manifold clearly
contains $\Gamma_\infty$; it also contains $\gamma$ because it
contains all of $\langle \gamma \rangle$.  Hence the fundamental group
of this manifold is $\rho_t(\pi_1(C))$.  Lemma
\ref{lemma:finding-ford} implies that we have found the entire
(equivariant) Ford domain.

Work of J{\o}rgensen \cite{jorgensen:cyclic} and Drumm and Poritz
\cite{drumm-poritz} implies that the face $I(\gamma)$ is visible in
the Ford domain of $\langle \gamma \rangle$.  Therefore in our case,
$I(\gamma)$ must remain visible at time $t=t_0$ (this is contained in
\cite[Theorem 7.9]{drumm-poritz}, see also the two paragraphs before
the statement of that theorem).  Then our result follows from Lemma
\ref{lemma:tunnel-dual-gamma}.
\end{proof}

By Lemma \ref{lemma:tunnel-dual-gamma}, in a real analytic path of
minimally parabolic geometrically finite uniformizations of $C$ which
begins with a simple Ford spine, if the isometric spheres corresponding
to $\gamma$ and $\gamma^{-1}$ remain visible throughout, then the core
tunnel remains visible.  We found no topological obstruction to the
isometric sphere of $\gamma$ being covered.  However, in practice, we
were unable to find examples of paths in which this occurred.  All
such examples led to indiscrete groups.

Figure \ref{fig:no-core} shows examples of Ford domains obtained by
our computer program which are not guaranteed to have a geodesic core
tunnel by Theorem \ref{thm:disk-disjoint-ford}.  However, each of
these can be shown to have geodesic core tunnel by observation.  In
particular, the face $I({\gamma})$ is visible always for each of these
examples.  Thus by Lemma \ref{lemma:tunnel-dual-gamma}, the core
tunnel is geodesic for each of these structures.  Moreover, it is
actually dual to a face of the Ford spine.

\begin{figure}
\begin{center}
\begin{tabular}{ccccc}
  \includegraphics[width=1.75in]{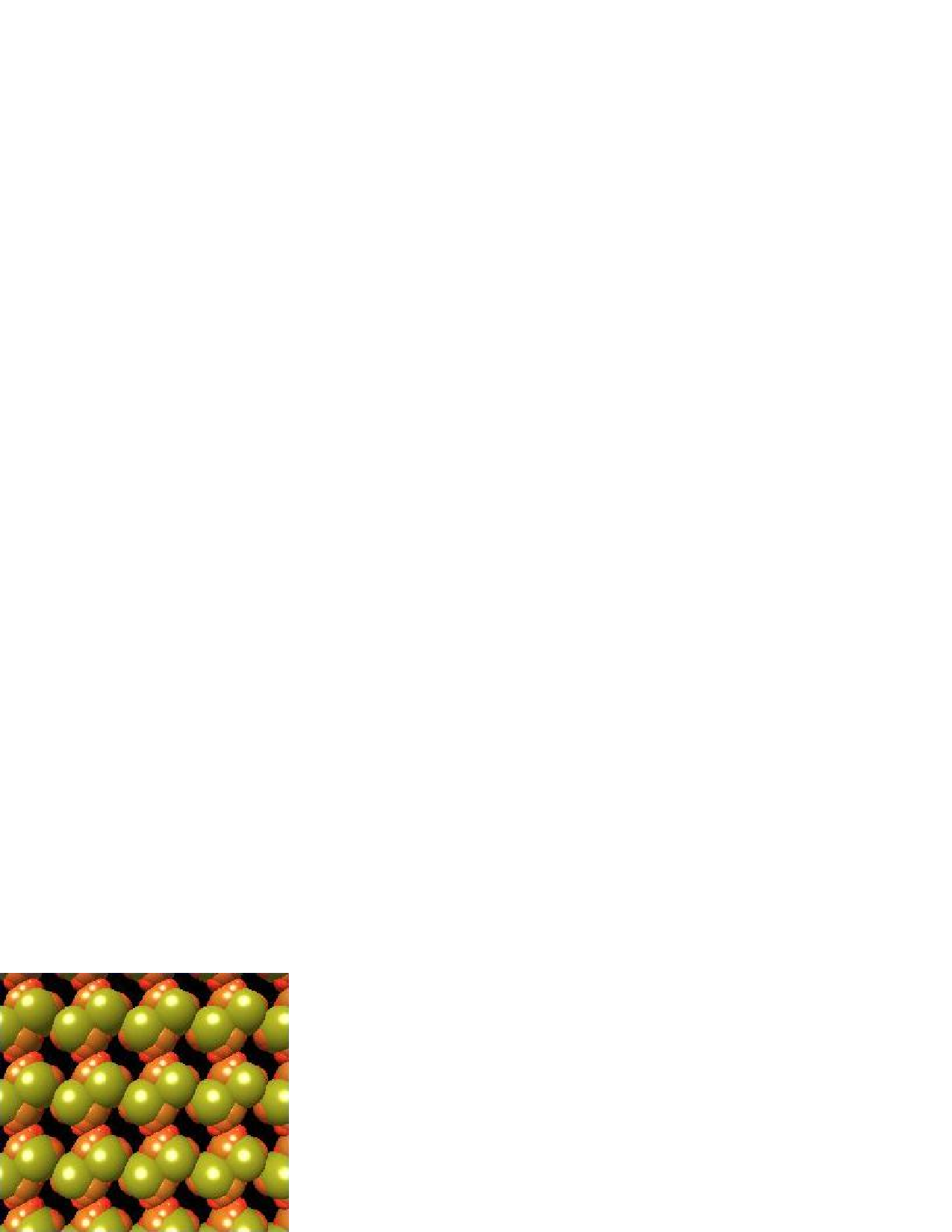}
  & \hspace{0.01in} &
	\includegraphics[width=1.75in]{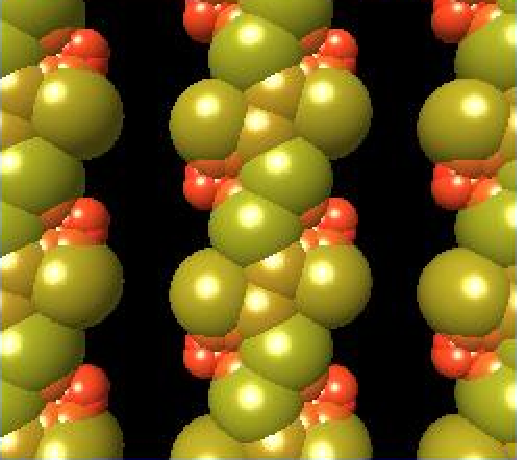} &
	\hspace{.01in} & 
	\includegraphics[width=1.75in]{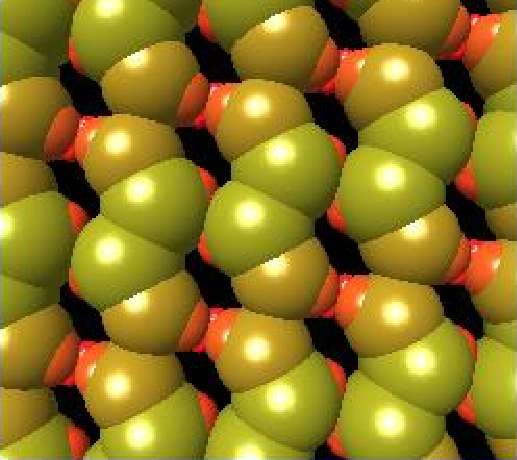}
\end{tabular}
\end{center}
\caption{Snapshots along paths.  In the figures shown, the core tunnel
  is geodesic because $I(\gamma)$ remains visible.}
\label{fig:no-core}
\end{figure}

This leads us to the following strengthening of Conjecture
\ref{conj:core-tunnel}.

\begin{conjecture}
In any geometrically finite hyperbolic structure on a
$(1;2)$--compression body, the core tunnel is isotopic to a geodesic
dual to a face of the Ford domain.
\label{conj:dual-ford}
\end{conjecture}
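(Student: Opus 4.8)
The plan is to reduce Conjecture~\ref{conj:dual-ford} to a single visibility assertion about the isometric sphere of the loxodromic generator, and then to attack that assertion with the compactness and geometric-convergence techniques already used in the proof of Lemma~\ref{lemma:open}. Recall that by Lemma~\ref{lemma:homotopic-core} the core tunnel is always \emph{homotopic} to the edge dual to the isometric sphere of $\rho(\gamma)$, so the entire content of the conjecture is the promotion of this homotopy to an isotopy onto a geodesic dual face.

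First I would reduce to the minimally parabolic case: a geometrically finite structure with a rank~$1$ cusp lies in the closure of the minimally parabolic locus (pinching a curve on $\bdy_+C$), and I would argue the dual-face description of the core tunnel is inherited in the limit, since the algebraic convergence is geometric when the limit is geometrically finite (cf. \cite{brock-souto}); alternatively one opens the rank~$1$ cusp and applies the minimally parabolic case directly. For the minimally parabolic case, by Lemma~\ref{lemma:path-conn} and the Whitney Approximation Theorem every such uniformization is joined to the uniformization of Example~\ref{ex:simple-ford} by a real analytic path $\rho_t$, $t\in[0,1]$, and by Proposition~\ref{prop:simple-ford} the conclusion holds at $t=0$, with the core tunnel dual to the single face corresponding to $\gamma$. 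The point of Lemma~\ref{lemma:tunnel-dual-gamma} is then precisely that, \emph{provided the face of the Ford spine corresponding to $\gamma$ stays visible for all $t$}, the real analytic family of embedded dual edges gives an ambient isotopy carrying the core tunnel to a geodesic dual to that face at every $t$. Thus the whole conjecture reduces to: \emph{along such a path the face corresponding to $\gamma$} (i.e. $I(\rho_t(\gamma))$ up to translation by $\Gamma_\infty$ and replacement of $\gamma$ by $w_0\gamma^{\pm1}w_1$, $w_i\in\Gamma_\infty$, as in Definition~\ref{def:correspond}) \emph{is never covered.}

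Suppose not, and let $t_0$ be the infimum of times at which this face is invisible; since the visible times form an open set by Lemma~\ref{lemma:open}, the face is still invisible at $t_0$, and by Lemma~\ref{lemma:hemisphere-emerge} (and Lemma~\ref{lemma:nobump-nonew}) it became invisible only after some bumping. Work of J{\o}rgensen \cite{jorgensen:cyclic} and Drumm--Poritz \cite{drumm-poritz} shows the cyclic loxodromic group $\langle\rho_{t_0}(\gamma)\rangle$ is geometrically finite and that $I(\rho_{t_0}(\gamma))$ is visible in \emph{its} Ford domain; hence $I(\rho_{t_0}(\gamma))$ is not covered by the isometric spheres of $\Gamma_\infty\langle\rho_{t_0}(\gamma)\rangle\Gamma_\infty$, so it must be covered by $\overline{B(k)}$ for some $k$ whose normal form in $(\ZZ\times\ZZ)\ast\ZZ$ involves $\alpha$ or $\beta$ nontrivially. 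Following the proof of Lemma~\ref{lemma:open}, the set of such $k$ whose half-balls meet a fixed open subset of $I(\rho_{t_0}(\gamma))$, normalized by $\Gamma_\infty$ so that $I(k)$ and $I(k^{-1})$ are closest, lies in a compact subset of $\PSL(2,\CC)$; combining a subsequential limit with the geometric convergence of the $\rho_t(\Gamma)$ should produce a nontrivial sequence of group elements converging to the identity, contradicting discreteness. Making this last step work is exactly where the argument stalls, and I expect it to be the main obstacle: there is no evident topological obstruction to $I(\gamma)$ being covered --- only the empirical fact, recorded after Theorem~\ref{thm:disk-disjoint-ford}, that every attempt produces a non-discrete group --- so one must genuinely use the free-product structure (for instance, that $k$ together with $\gamma$ and $\Gamma_\infty$ would have to generate a group whose thin-part geometry near the bumping locus forces relations incompatible with $(\ZZ\times\ZZ)\ast\ZZ$). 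A possible alternative is a direct combinatorial argument on the dual complex: via the retraction of Lemma~\ref{lemma:retract-dual}, deleting the dual edge of $\gamma$ when it becomes invisible ought to change $\pi_1$ of the quotient, which is impossible; but this too requires controlling which faces can cover $I(\gamma)$, and so reduces to the same difficulty.
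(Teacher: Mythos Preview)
The statement you are trying to prove is labelled a \emph{conjecture} in the paper, and the paper does not prove it; there is no proof to compare against. The paper offers only partial evidence (Proposition~\ref{prop:simple-ford}, Theorem~\ref{thm:disk-disjoint-ford}, and computer experiments) and explicitly records, in the paragraph following Theorem~\ref{thm:disk-disjoint-ford}, that the authors ``found no topological obstruction to the isometric sphere of $\gamma$ being covered'' and that every attempted counterexample happened to produce an indiscrete group. Your reduction via Lemma~\ref{lemma:tunnel-dual-gamma} to the assertion that $I(\rho_t(\gamma))$ remains visible along the path is exactly the reduction the paper makes, and the obstacle you flag is exactly the one the authors could not overcome.

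The specific mechanism you propose for closing the gap does not work, and the reason is worth naming. The compactness argument of Lemma~\ref{lemma:open} produces a contradiction because one has a \emph{sequence of times} $t_n\to t_0$ and a \emph{sequence of distinct group elements} $g_n$ with $B(\rho_{t_n}(g_n))$ meeting a fixed open set; passing to a limit $h=\rho_{t_0}(g)$ then gives $\rho_{t_n}(g g_n^{-1})\to 1$. In your situation there is no such sequence: at the single time $t_0$, the sphere $I(\rho_{t_0}(\gamma))$ can perfectly well be covered by finitely many fixed spheres $I(\rho_{t_0}(k_1)),\dots,I(\rho_{t_0}(k_m))$, which for $t<t_0$ simply did not quite cover it. Nothing forces infinitely many distinct $k$'s to participate, so there is nothing to feed into geometric convergence. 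Your alternative via the retraction of Lemma~\ref{lemma:retract-dual} has the same defect: removing one dual edge from a $2$--complex that deformation retracts to $C$ need not change $\pi_1$, because other edges of the dual complex may already carry the generator $\gamma$ (indeed $I(\gamma^2)$ and $I(\gamma^{-1})$ together recover $\gamma$). So the proposal is a correct outline of the known reduction, but it does not supply the missing idea, and as far as the paper is concerned the conjecture remains open.
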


The analogue of Conjecture \ref{conj:dual-ford} is false for finite
volume manifolds.  Sakuma and Weeks conjectured in \cite{sakuma-weeks}
that core tunnels in knot complements were isotopic to edges of the
Epstein--Penner canonical polyhedral decomposition of the knot
complement \cite{epstein-penner}, which is dual to the Ford domain.
Sakuma and Weeks' conjecture was shown to be false by Heath and Song
in \cite{heath-song}.  They showed that the knot $P(-2,3,7)$ has four
distinct core tunnels, but only three edges of the canonical
polyhedral decomposition.

Nevertheless, we believe Conjecture \ref{conj:dual-ford} will be true
for compression bodies.

\subsection{Relation to Computer Procedure}

Conjecture \ref{conj:dual-ford} is intricately related to Procedure
\ref{procedure}.  

Suppose Conjecture \ref{conj:dual-ford} is false.  Then for some
geometrically finite hyperbolic structure, the faces corresponding to
$\gamma$, $\gamma^{-1}$ are not visible.  In this case, it is not
clear whether Procedure \ref{procedure} will actually find and draw
all the faces of the Ford domain.  Additionally, since at least
initially the procedure is drawing isometric spheres that will not be
faces of the Ford domain, it no longer follows that the procedure is
drawing faces of a convex fundamental domain for the group $\Gamma$.
Hence work of Bowditch will not apply, and the procedure may never
terminate.

Similarly, suppose a face $F$ of the Ford domain initially arose as
the intersection of two visible faces in a path through Ford domains,
but that later in the path, those visible faces or their edge of
intersection becomes covered by some other face.  Then it could
possibly be the case that Procedure \ref{procedure} never draws face
$F$.  However, again based on experimental evidence, this seems unlikely.  

How might a face become invisible?  If it is covered by other faces.
In the interior, such a move would occur as the dual of a 3--2 move of
tetrahedra, or some similar move.

\begin{question}
For any geometrically finite hyperbolic structure on a
$(1;2)$--compression body, does there exist a smooth path of Ford
domains from the simple structure to this particular structure for
which there are no internal moves on the Ford domain?  
\label{quest:no-intern}
\end{question}

Note that an affirmative answer to Question \ref{quest:no-intern}
would imply Conjecture \ref{conj:dual-ford}, as the faces
corresponding to $\gamma$, $\gamma^{-1}$ cannot disappear as other
faces slide together and apart, meeting only on the boundary $\CC \cap \F$.  

There is some evidence for a positive answer to Question
\ref{quest:no-intern}.  Our proof of Theorem
\ref{thm:disk-disjoint-ford} shows that 2--3 moves are impossible in
the core of the Ford domain when the Ford domain has the form of that
theorem.  Interestingly, in the case of once--punctured tori,
J{\o}rgensen also found that internal moves in the Ford domain are
impossible \cite{jorgensen}.  However, his proof was similar to our
proof of Theorem \ref{thm:disk-disjoint-ford}, and will not answer
Question \ref{quest:no-intern}.

Using our computer program, we have found that 2--3 moves do occur in
the $(1;2)$--compression body setting.  However, in all examples
encountered, there was a straightforward way to reparameterize the
path of structures to avoid these moves.

\bibliographystyle{hamsplain}
\bibliography{biblio}

\end{document}